\numberwithin{equation}{section}
\title{Orientations of graphs with uncountable chromatic number}
\author{D. T. Soukup}
\date{\today}
 \email[Corresponding author]{daniel.soukup@univie.ac.at}
\urladdr{http://www.logic.univie.ac.at/$\sim  $soukupd73/}
\address{Universität Wien
Kurt G\"odel Research Center for Mathematical Logic
W\"ahringer Strasse 25
1090 WIEN
AUSTRIA}
\dedicatory{Dedicated to Professor Andr\'as Hajnal.}
\subjclass[2010]{05C63, 05C20, 05C15, 03E35}
\newtheorem{prop}{Proposition}[section]
\newtheorem{lemma}[prop]{Lemma}
\newtheorem{fact}[prop]{Fact}
\newtheorem{cor}[prop]{Corollary}
\newtheorem{con}[prop]{Conjecture}
\newtheorem{theorem}[prop]{Theorem}
\newtheorem{theorem?}[prop]{Theorem???}
\newtheorem{tclaim}{Claim}[prop]
\newtheorem{claim}[prop]{Claim}
\newtheorem{obs}[prop]{Observation}
\newtheorem{prob}[prop]{Problem}
\newtheorem{quest}[prop]{Question}
\DeclareMathOperator{\cf}{cf}
\DeclareMathOperator{\Sh}{Sh}
\newcommand{\half}{H_{\oo,\oo}}
\newcommand{\mc}[1]{\mathcal{#1}}
\newcommand{\mb}[1]{\mathbb{#1}}
\newcommand{\oo}{\omega}
\newcommand{\uhr}{\upharpoonright}
\newcommand{\omg}{{\omega_1}}
\newcommand{\dom}{\text{dom}}
\newcommand{\ran}{\text{ran}}
\newcommand{\supp}{\text{supp}}
\newcommand{\chr}[1]{\chi(#1)}
\newcommand{\dchr}[1]{\arr{\chi}(#1)}
\newcommand{\arr}[1]{\overrightarrow{#1}}
\newcommand{\larr}[1]{\overleftarrow{#1}}
\newcommand{\ENL}[3]{{#1}\stackrel{\text{ENL}}{\longrightarrow} \bigl ({#2}\bigr)^1_{#3}}
\newcommand{\ENLL}[2]{{#1}\stackrel{\text{ENL}}{\Longrightarrow} \bigl({#2}\bigr)}
\newcommand{\force}{{\hspace{0.02 cm}\Vdash}}
\newcommand{\setm}{\setminus}
\newcommand{\DD}{\color{black}}
\begin{document}

\begin{abstract}Motivated by an old conjecture of P. Erd\H os and V. Neumann-Lara, our aim is to investigate digraphs with uncountable dichromatic number and orientations of undirected graphs with uncountable chromatic number.  {\DD A graph has uncountable \emph{chromatic number} if its vertices cannot be covered by countably many independent sets, and a digraph has uncountable \emph{dichromatic number} if its vertices cannot be covered by countably many acyclic sets.}  We prove that consistently there are digraphs with uncountable dichromatic number and arbitrarily large {\DD digirth}; this is in surprising contrast with the undirected case: any graph with uncountable chromatic number contains a 4-cycle. Next, we prove that several well known graphs (uncountable complete graphs, certain comparability graphs, and shift graphs) admit orientations with uncountable dichromatic number in ZFC. However, we show that the statement ``every graph $G$ of size and chromatic number $\omg$ has an orientation $D$ with uncountable dichromatic number" is independent of ZFC.
\end{abstract}

\maketitle

\section{Introduction}

The chromatic number of an undirected graph $G$, denoted by $\chr{G}$, is the minimal number of independent sets needed to cover the vertex set of $G$. A beautiful branch of graph theory deals with the problem of understanding the consequences of having large (finite or infinite) chromatic number. In particular, what subgraphs $H$ must appear in graphs $G$ with large, say uncountable chromatic number? Is it true that cycles, paths or certain highly connected sets must embed into every graph with large enough chromatic number? There are numerous deep results regarding these questions; the investigations started in the 1960s with a seminal paper of P. Erd\H os and A. Hajnal \cite{EH0} and later on, significant contributions were made by P. Komj\' ath, S. Shelah, C. Thomassen, S. Todorcevic and several other people. In particular, it is now well understood exactly what cycles and finite graphs must embed into a graph $G$ with $\chr{G}>\oo$.  We shall review some of these results in later sections but the surveys \cite{kopesurv, kopeerdos} offer great overview of this topic.

 In the case of directed {\DD graphs}, acyclic sets play the role of independent sets: the \emph{dichromatic number of a directed graph} $D$, denoted again by $\chr{D}$, is defined to be the minimal number of \emph{acyclic vertex sets} needed to cover the vertices of $D$ \cite{nlara}. {\DD The notion of the dichromatic number of digraphs is certainly well investigated (see \cite{dutta, harut,harutplanar,mohar2016,spencer2008size} for various directions in research).} Now, our paper is motivated by two fundamental questions: first, we aim to understand which classical results on chromatic number and obligatory subgraphs extend to the directed case. Second, we hope to shed more light on an old conjecture of Erd\H os and V. Neumann-Lara \cite{ENL, nessparse}:

\begin{con}\label{ENLconj} There is a function $f:\mb N\to \mb N$ so that $\chr G\geq f(k)$ implies that $\chr{D}\geq k$ for some orientation $D$ of $G$.
\end{con}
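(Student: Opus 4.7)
The plan is ambitious: Conjecture~\ref{ENLconj} is a famous longstanding open problem of Erd\H os and Neumann-Lara, so I do not claim a full proof but rather outline a plausible line of attack. The first reduction is that one only needs to find inside $G$ \emph{some} subgraph $H$ that admits an orientation of dichromatic number at least $k$: extending such an orientation arbitrarily to all of $G$ yields $D$ with $\chr{D}\geq \chr{D\uhr H}\geq k$, since dichromatic number is monotone under taking subdigraphs. The question then becomes: which obligatory substructures of a graph with sufficiently large chromatic number admit orientations of large dichromatic number?

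The first candidate is a large clique: it is classical that a uniformly random orientation of $K_n$ has dichromatic number $\Omega(n/\log n)$, so a clique of size about $k\log k$ would already suffice. This fails dramatically in general, since graphs of large chromatic number and arbitrarily large girth exist (Erd\H os), and so one is forced to look at \emph{sparser} obligatory substructures: long even cycles, large topological minors of $K_n$ supplied by Koml\'os--Szemer\'edi-type theorems, and highly connected subgraphs in the style of Thomassen and of Alon--Krivelevich--Sudakov. The program is then to orient each such substructure carefully so that the resulting digraph already has large dichromatic number.

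This is exactly where the main obstacle lies. Dichromatic number is far more fragile than chromatic number with respect to the orientation: any long path can be oriented acyclically, so subdividing edges tends to collapse dichromatic number. The substructures produced by the standard sparse extremal theorems are, in this precise sense, ``too subdivided'' to force large dichromatic number in an obvious way, and closing this gap is where Conjecture~\ref{ENLconj} has stalled. A complete proof seems to require a genuinely new structural theorem asserting that very large chromatic number forces a dense, non-subdividable gadget on which no orientation can be made acyclic.

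As a more tractable first step I would attempt small $k$. For $k=3$ one can try to exploit the fact that high chromatic number forces many short even cycles sharing edges (classical Erd\H os--Hajnal theory), and two suitably overlapping even cycles can be oriented to produce three arc-disjoint dicycles on a shared vertex set, forcing dichromatic number at least $3$ on that local gadget. An inductive or Ramsey-theoretic amplification of this construction might push $k$ up to any fixed value, but the genuine jump to unbounded $k$ is where I expect the argument to break, precisely because no current structural theorem about high-chromatic graphs delivers the required density.
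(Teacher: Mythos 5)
This statement is the Erd\H os--Neumann-Lara conjecture, which remains open; the paper does not prove it, and indeed states it precisely as a motivating conjecture, noting that $f(2)=3$ is the only known value. There is therefore no ``paper proof'' to compare against; you correctly recognize this and refrain from claiming a proof, so there is no error of overreach on your side.

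That said, a few remarks on your outline are in order. Your initial reduction is correct: if some subgraph $H\subseteq G$ admits an orientation of dichromatic number at least $k$, then extending that orientation arbitrarily to all of $G$ gives $D$ with $\chr{D}\geq \chr{D[V(H)]}\geq k$, since dichromatic number is monotone under adding arcs. Your observation that a random orientation of $K_n$ has dichromatic number $\Omega(n/\log n)$ is also accurate and is indeed a standard benchmark in this area. The diagnosis of the central obstacle --- that the obligatory substructures delivered by sparse extremal theory (long paths, subdivided gadgets, high-girth expanders) admit acyclic orientations and hence do not directly force large dichromatic number --- is precisely the reason the conjecture has resisted attack, so your account of the landscape is sound. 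Where you understate the difficulty is the $k=3$ step: $f(3)$ is itself unknown, and the gadget you describe (overlapping even cycles producing arc-disjoint dicycles) does not by itself force dichromatic number $3$ on the local structure without additional work; two arc-disjoint dicycles through a common vertex can still be $2$-colored acyclically if they share only that vertex.

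Finally, note that the paper's actual contribution to this conjecture lies in the uncountable setting: under $\diamondsuit^+$ every graph of size and chromatic number $\omega_1$ has an orientation of uncountable dichromatic number (Theorem~\ref{posrel}), while consistently there is a graph $G$ with $\chr{G}=|G|=\omega_1$ but $\dchr{G}\leq\omega$ (Theorem~\ref{negrel}), so the analogous $\omega_1$-version of the conjecture is independent of ZFC. In particular, any eventual $f$ as in Conjecture~\ref{ENLconj} must, consistently, satisfy $f(\aleph_1)>\aleph_1$. This independence phenomenon is entirely invisible to the finitary approaches you outline, and is orthogonal to --- rather than a route towards --- the finite conjecture.
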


Note that any graph $G$ with $\chr G\geq 3$ must contain a cycle and hence there is an orientation $D$ of $G$ with a directed cycle i.e. $\chr D\geq 2$. In turn $f(2)=3$ but no other value of the function $f$ is currently known. Our  aim will be to understand the possible values of $\chr{D}$ where $D$ is an orientation of a graph $G$ with $\chr{G}>\oo$. In \cite{ENL}, a related invariant is introduced and further investigated in {\DD \cite{erdos_dichrom}}: let $$\dchr{G}=\sup\{\chr{D}: D \text{ is an orientation of }G\}.$$  That is, $\dchr{G}\geq k$ means that there is an orientation of $G$ so that whenever we colour the vertices of $G$ with $<k$ colours then we can find a monochromatic directed cycle.  As Erd\H os noted in \cite{ENL}, it is surprisingly hard to determine $\dchr{G}$ for rather simple graphs $G$; we certainly can't refute this in the case of uncountable graphs either. 

\medskip

Before we summarize the results of our paper, let us introduce some notation: throughout the paper, $G$ will denote an undirected graph and $D$ a digraph. An orientation $D$ of an undirected graph $G$ is a digraph $D$ with the same set of vertices as $G$ and for every undirected edge $ab$ in $G$ either $ab$ or $ba$ (but not both) is an arc of $D$. We will use the well known arrow notation:
$$D\to (D_0)^1_r$$ means that for every $r$-colouring of the vertices of $D$ one can find a monochromatic copy of $D_0$. The negation is denoted by $D\nrightarrow (D_0)^1_r$. 

 We let $$\ENL{G}{D_0}{r}$$ mean that there is an orientation $D$ of $G$ such that $D\to (D_0)^1_r$.

We will write $$\ENLL{G}{D_0}$$ to denote the fact that there is an orientation $D$ of $G$ such that {\DD $D_0$ is a subgraph of $D[W]$} whenever $\chr{G[W]}=\chr{G}$.

\medskip

We start in Section \ref{prelimsec} by proving an important lemma on amalgamating digraphs with large digirth; this will later be applied in multiple arguments. Next, in Section \ref{obligsec}, we investigate what are those directed graphs that embed into any digraph $D$ with  $\chr D>\omega$. The two main results of this section are Theorem \ref{digirth} and \ref{noarrowcon}: we prove that consistently 
\begin{itemize}
 \item for each $k<\oo$ there is a digraph $D$ with $\chr D>\oo$ so that $D$ has no directed cycles of length $\leq k$;
\item there is a digraph $D$ with $\chr D>\oo$ so that $D\nrightarrow (\arr{C}_k)^1_k$ for all $k<\oo$.
\end{itemize}
This is in surprising contrast with the undirected case: $\chr G>\oo$ implies that {\DD $G\to ({C}_{2k})^1_\oo$} for all $k<\oo$. We remark that a standard compactness argument combined with Theorem \ref{digirth} shows the existence of finite digraphs $D$ with arbitrary large digirth and dichromatic number, a result of D. Bokal et al \cite{bokal}.

Next, in Section \ref{posrelsec}, we construct various orientations of graphs $G$ with uncountable chromatic number. First, we look at specific graphs: the complete graph on $\kappa$ vertices, comparability graphs of Suslin trees and certain non-special trees and shift graphs. We show that these undirected graphs all admit orientations with large dichromatic number (in ZFC). Now, we can see that any obligatory subgraph for digraphs $D$ with uncountable dichromatic number must be bipartite and consistently acyclic. 

 Second, we show in Theorem \ref{posrel} that  $\chr G=\omg$ is equivalent to $\dchr G=\omg$ under $\diamondsuit^+$ for any $G$ of size  $\omg$. Actually, we prove the much stronger relation $$\ENLL{G}{D}$$ where $D$ is any orientation of the half graph $H_{\omega,\omega}$.  

Finally, in Section \ref{negrelsec}, we show that consistently there is a graph $G$ with $\chr G=|G|=\omg$ but $\dchr G\leq \oo$; that is, $\chr D\leq \oo$ for any orientation $D$ of $G$. In particular, this provides some information on the  Erd\H os-Neumann-Lara conjecture for uncountable graphs: the statement ``$\chr G=\omg$ implies $\dchr G=\omg$ for $G$ of size $\omg$" is independent of ZFC. 

We end our paper with a healthy list of open problems which in our opinion worth the attention of the interested reader.
\medskip

\textbf{Acknowledgements.} Several arguments in the present paper were motivated by ideas from \cite{simchrom}.  {\DD We thank the anonymous referees for their careful reading and many advice which significantly improved the presentation of the paper. 

The author was supported in part by PIMS and the FWF Grant I1921.}

\section{Preliminaries}\label{prelimsec}

{\DD In our paper, $G$ always denotes an undirected graph i.e. a pair $(V,E)$ so that $E\subseteq [V]^2$. A pair $D=(V,E)$ is a digraph if $E\subseteq V^2$, and we do not allow multiple arcs i.e. if $uv\in E$ then $vu\notin E$. We say that \emph{$D$ is an orientation of $G$} if $D$ and $G$ have the same set of vertices and $D$ has an arc between two vertices $u$ and $v$ (in exactly one of the two directions) if and only if $uv$ is an edge in $G$.}

 We will use $V(G)$ and $V(D)$ to denote the vertex set of $G$ and $D$, and $E(G)$ and $E(D)$ to denote the edge/arc set of $G$ and $D$, respectively. We let $N^+(v)=\{w\in V(D):vw\in E(D)\}$ and $N^-(v)=\{w\in V(D):wv\in E(D)\}$. For digraphs $D_i$, we use the convention that $D=\bigcup \{D_i:i<n\}$ is the pair $(\bigcup \{V(D_i):i<n\},\bigcup \{E(D_i):i<n\})$ which may or may not be a digraph {\DD in our definition (since multi-edges could be introduced).}

{\DD We write $G_0\hookrightarrow G$ to denote the fact that $G_0$ embeds into $G$ as a not necessarily induced subgraph; $\hookrightarrow$ will also be used in the context of digraphs.} We let $G[W]$ and $D[W]$ denote the induced subgraph of $G$ and $D$ on vertices $W$. 


We say that the length of a path is the number of its edges. Let $\arr{P_\omega}$ denote the one way infinite directed path and let $\arr{C_n}$ denote the directed cycle with $n$ vertices. {\DD The \textit{girth/digirth of a graph/digraph} is the length of its shortest cycle/directed cycle.}
\medskip

We will frequently use the following lemma on amalgamating digraphs with prescribed digirth.

\begin{lemma}\label{amalglemma}
 Suppose that the digraphs $D_i$ are on vertex sets $V_i$ (finite or infinite) so that {\DD there is a single $R$ such that} $R=V_i\cap V_j$ and there is a digraph isomorphism $\psi_{i,j}:V_i\to V_j$ which is the identity on $R$ for all $i<j<n$. Then $D=\bigcup \{D_i:i<n\}$ is a digraph. 

Fix $k\in \oo$ at least 3. If each $D_i$ has digirth bigger than $k$ then
\begin{enumerate}
 \item any path  $P$ from $\alpha\in V_i$ to $\alpha'=\psi_{i,j}(\alpha)\in V_j$ in $D$ has length $>k$;
\item $D$ has digirth bigger than $k$.
\end{enumerate}
Furthermore, suppose that $\alpha_{i}\in V_i\setm R$ so that $\alpha_j=\psi_{i,j}(\alpha_i)$ for $i<j<n$. 
\begin{enumerate}
\setcounter{enumi}{2}
 \item Let $n>k$ and define $D^*$ by $V(D^*)=V(D)$ and $E(D^*)=E(D)\cup \{\alpha_{n-1}\alpha_0,\alpha_i\alpha_{i+i}:i<n-1\}$. Then $D^*$ has digirth bigger than $k$.

\end{enumerate}
\end{lemma}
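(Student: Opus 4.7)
The plan is to introduce a projection $\pi$ from $D$ down to $D_0$ that sends directed walks to directed walks of the same length, and then to invoke the digirth hypothesis on $D_0$ together with the elementary fact that a closed directed walk of positive length in a digraph without anti-parallel edges contains a directed cycle of length at most the walk length. Fix a compatible system of isomorphisms $\psi_i := \psi_{0,i} : V_0 \to V_i$ (with $\psi_0 = \id$), arranged so that $\psi_{i,j} = \psi_j \circ \psi_i^{-1}$; this may be assumed without loss of generality. Define $\pi : V(D) \to V_0$ by $\pi(v) = \psi_i^{-1}(v)$ for $v \in V_i \setm R$ and $\pi(v) = v$ for $v \in R$. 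Because every edge of $D$ has both endpoints in a common $V_i$, and $\psi_i$ is a digraph isomorphism fixing $R$ pointwise, any directed walk in $D$ maps under $\pi$ to a directed walk in $D_0$ of the same length.

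Parts (1) and (2) follow at once. For (1), with $P$ a directed path from $\alpha \in V_i \setm R$ to $\alpha' = \psi_{i,j}(\alpha) \in V_j \setm R$, the image $\pi(P)$ is a \emph{closed} directed walk in $D_0$ based at $\psi_i^{-1}(\alpha)$ of length $|P|$, so the digirth assumption on $D_0$ forces $|P|>k$. (If $\alpha \in R$ then $\alpha=\alpha'$ and either $P$ has length $0$ or $P$ is itself a directed cycle, in which case (2) applies.) For (2), the same map sends any directed cycle in $D$ to a closed directed walk in $D_0$ of equal length, giving the identical conclusion.

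For (3), take a directed cycle $C$ in $D^*$ and decompose its edges into maximal runs of new edges (each such run being a directed sub-path of the new cycle $\alpha_0 \to \alpha_1 \to \cdots \to \alpha_{n-1} \to \alpha_0$) alternating with maximal runs of old edges from $D$ (each of which is a directed path between two of the $\alpha_i$'s). If $C$ uses no new edge, apply (2); if $C$ uses only new edges then $C$ must be the whole new cycle, of length $n>k$. Otherwise $C$ contains at least one old-edge segment $P_l$ running from some $\alpha_{q_l}$ to some $\alpha_{p_{l+1}}$, and $q_l \neq p_{l+1}$ because $C$ is a simple cycle and the bordering new-edge runs were maximal. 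The compatibility of the $\alpha_i$'s then yields $\alpha_{p_{l+1}} = \psi_{q_l, p_{l+1}}(\alpha_{q_l})$, so part (1) gives $|P_l|>k$ and hence $|C|>k$. The main technical point I expect to watch carefully is precisely this step in (3): verifying that every old-edge run has distinct $\alpha$-endpoints related by a $\psi$ in the given system; once that is in place the rest is routine case analysis.
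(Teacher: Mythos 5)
Your proof is correct and follows essentially the same strategy as the paper: project the amalgam $D$ onto a single copy $D_i$ via the union of the $\psi$'s, observe the image of a short path or cycle is a short closed directed walk there, and contradict digirth; your decomposition of a cycle in $D^*$ into alternating new-edge and old-edge runs for (3) is also the paper's argument, just written more carefully. One small correction: the ``without loss of generality'' reduction to a compatible system $\psi_{i,j}=\psi_{0,j}\circ\psi_{0,i}^{-1}$ is not actually licensed, because the conclusion of (1) refers to the given $\psi_{i,j}$, and changing the system changes the statement; the paper avoids this by projecting not to $D_0$ but to $D_{j}$ (the copy containing $\alpha'=\psi_{i,j}(\alpha)$), so that $\psi(\alpha)=\psi_{i,j}(\alpha)=\alpha'=\psi(\alpha')$ holds tautologically with no compatibility assumption — your argument goes through verbatim after replacing $\pi$ by $\pi_j=\bigcup_{i'}\psi_{i',j}$ (with the convention $\psi_{j,j}=\id$ and $\psi_{i',j}=\psi_{j,i'}^{-1}$ for $i'>j$). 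The step you flagged in (3) is fine as you argued: maximality of the bordering new-edge runs together with simplicity of $C$ forces the old-edge run to join two distinct $\alpha_q\neq\alpha_{p'}$, and the hypothesis $\alpha_j=\psi_{i,j}(\alpha_i)$ puts you exactly in the situation of (1).
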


Note that the analogue of Lemma \ref{amalglemma} trivially fails for undirected graphs: it is easy to find $G_0,G_1$ both copies of the path of length 2 so that $G_0\cup G_1$ is a copy of $C_4$.

\begin{proof} It is obvious that $D$ is a digraph.

 (1) Suppose that there is a path $P$ on vertices $a_0=\alpha,a_1, \dots ,a_{\ell-1},a_{\ell}=\alpha'$ from $\alpha\in V_{i^*}$ to $\alpha'=\psi_{i^*,j^*}(\alpha)\in V_{j^*}$ in $D$ which has length $\ell\leq k$; we can suppose that $\ell$ is minimal. Let $\psi_{i,i}$ be the identity on $V_i$ and let $$\psi=\bigcup\{\psi_{i,j^*}:i< n\}.$$  Note that $\psi$ is a digraph homomorphism from $D$ to $D_{j^*}$. Furthermore, $\psi$ is injective on $\{a_i:i<\ell\}$ by the minimality of $\ell$. Hence $\psi(a_0)=\alpha', \dots ,\psi(a_{\ell-1}),\psi(a_{\ell})=\alpha'$ is a cycle in $D_{j^*}$ of length $\ell\leq k$ which contradicts that $D_{j^*}$ has digirth $>k$.

(2) Now, suppose that $C$ on vertices $a_0,a_1, \dots ,a_{\ell-1},a_{\ell}=a_0$ is a cycle in $D$ of length $\ell\leq k$. Let $j^*\in n$ so that $a_0\in V_{j^*}$. If $\psi$ is defined as above then $\psi$ has to be injective on $\{a_i:i<\ell\}$ otherwise there is a path (a subgraph of $C$) contradicting (1). In particular, $\psi(a_0)=a_0, \dots ,\psi(a_{\ell-1}),\psi(a_{\ell})=a_0$ is a cycle in $D_{j^*}$ so $\ell>k$; this is a contradiction.

(3) Suppose that $C$ on vertices $a_0, \dots ,a_{\ell-1},a_{\ell}=a_0$ is a cycle in $D^*$ of length $\ell\leq k$. (1) and (2) {\DD imply} that $C$ must contain at least 2 non adjacent edges from $D^*\setm D$. Also, as $k<n$, there must be a vertex of $C$ not in $A=\{\alpha_i:i<n\}$. Hence, for some $\ell_0<\ell_1<\ell$, $a_{\ell_0},a_{\ell_1}\in A$ and $a_{\ell_0},\dots ,a_{\ell_1}$ is a directed path in $D$. However, this (and $\ell\leq k$) contradicts (1).
\end{proof}

Finally, let us slightly extend the arrow notations: given a set of directed graphs $\mc D$ we let $$D\to (\bigwedge \mc D)^1_r$$ mean that  for every $r$-colouring of the vertices of $D$ and every $D_0\in \mc D$ there is a monochromatic copy of $D_0$ in $D$. Similarly, $$D\to (\bigvee \mc D)^1_r$$ means that  for every $r$-colouring of the vertices of $D$ there is a monochromatic copy of some digraph $D_0$ from $\mc D$ in $D$.

Now, we write $$\ENL{G}{\bigvee \mc D}{r}$$ to mean that 
there is an orientation $D$ of $G$ such that $D\to (\bigvee \mc D)^1_r$. So the relation $\dchr G >\omega$ can be written as $$\ENL{G}{\arr{C}_3\vee\arr{C}_4\vee \dots}{\omega}$$  or $\ENL{G}{\bigvee_{3\leq n<\oo} \arr{ C}_n}{\omega}.$ 

Let us omit the straightforward definitions of $\ENL{G}{\bigwedge \mc D_0}{r}$, $\ENLL{G}{\bigvee \mc D_0}$ and $\ENLL{G}{\bigwedge \mc D_0}$. 

{\DD
\subsection{Set theoretic preliminaries} In general, we use standard set theoretic notations and definitions but let us refer the reader to \cite{kunen} for anything that is left undefined. However, we do include a short reminder of two key (and somewhat advanced) concepts that appear regularly: elementary submodels and forcing.

 First, we say that a subset $M$ of a model $V$ is an elementary submodel if for any first order formula $\phi$ with parameters from $M$ is true in $(M,\in)$ (written as $M\models \phi$) if and only if it is true in $(V,\in)$. We write $M\prec V$ in this case. For technical reasons, one takes elementary submodels of $H(\theta)$, the collection of sets of hereditary cardinality $<\theta$, for a large $\theta$, instead of the complete set theoretic universe $V$. 
 
 The idea is, that if $G$ is an uncountable graph and $M$ is a countable elementary submodel so that $G=(V,E)\in M$, then the countable graph $G\uhr M:=G[V\cap M]$ highly resembles the uncountable $G$. Now $(M_\xi)_{\xi<\zeta}$ is a continuous chain of models if $M_\nu\subseteq M_\xi$ for $\nu<\xi$ and $M_\xi=\bigcup_{\nu<\xi}M_\nu$ for any limit $\xi<\zeta$. If $(M_\xi)_{\xi<\zeta}$ covers a graph $G$ then we can get a very useful decomposition of $G$ by looking at $(G\uhr M_{\xi+1}\setm M_\xi)_{\xi<\zeta}$. 
 
 Probably the most useful thing to keep in mind is the following:
 
  \begin{fact}\label{elfact} Suppose that $M$ is a countable elementary submodel of $H(\theta)$ and $A\in M$. If $A$ is countable then $A\subseteq M$ or equivalently, if $A\setm M$ is nonempty then $A$ is uncountable.
\end{fact}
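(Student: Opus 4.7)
The plan is to use countability to obtain a surjection $f\colon\oo\to A$, pull this surjection into $M$ by elementarity, and then exploit the fact that $\oo\subseteq M$ to conclude that every value of $f$ lies in $M$.

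First, I would establish the auxiliary statement $\oo\subseteq M$. The empty set and the successor operation are definable without parameters, so $\emptyset\in M$ and $n\in M\cap\oo$ implies $n\cup\{n\}\in M$; a straightforward external induction on $n$ then yields $\oo\subseteq M$.

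Second, because $A$ is countable in $H(\theta)$, the sentence ``there is a surjection from $\oo$ onto $A$'' holds in $H(\theta)$ with the only parameters $A,\oo\in M$. By $M\prec H(\theta)$ the same sentence holds in $M$, so one may pick a witnessing surjection $f\in M$. For each $n\in\oo\subseteq M$, the unique $y$ with $\langle n,y\rangle\in f$ is definable from the parameters $f,n\in M$; elementarity applied to this unique-existence statement forces $f(n)\in M$. Surjectivity of $f$ onto $A$ now gives $A\subseteq M$, and the ``equivalently'' clause is just the contrapositive.

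There is really no obstacle here; the only subtle point is the standard principle that any object uniquely definable from parameters in $M$ must itself lie in $M$, which is precisely what elementarity provides. The whole argument is a textbook instance of how countable elementary submodels absorb all their countable members.
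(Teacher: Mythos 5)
The paper states this as a bare \emph{Fact} and gives no proof, referring the reader to the survey \cite{soukel} on elementary submodels; your argument is the standard one that appears in any such introduction and is correct. (The only micro-omission is the vacuous case $A=\emptyset$, where no surjection $\oo\to A$ exists but $A\subseteq M$ trivially; this does not affect anything.)
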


In particular, if $G\in M$ and a finite set of vertices $W\subseteq G\uhr M$ has a single common neighbour outside $G\uhr M$ then there must be uncountably many common neighbours to $W$ in $G$ (and infinitely many of these will be in $G\uhr M$ too). Let us refer the reader to \cite{soukel} for a complete introduction to elementary submodels and combinatorics.

\medskip

Next, our main tool to prove the consistency of a statement is either invoking a combinatorial principle (like $\diamondsuit^+$) or by forcing. With forcing, one looks at a (countable) model $V$ of ZFC and a poset $\mb P\in V$ to form a larger model $V^{\mb P}$ by adding a  filter $\mc G\subseteq \mb P$ which is generic with respect to $V$. For example, $\mb P$ can be the set of all finite graphs (with a certain property) on say $\omg$; when extending a graph $p\in \mb P$ to a larger graph $q\in \mb P$, we do not add new edges between vertices of $p$. Now any filter $\mc G\subseteq \mb P$ defines a graph $G=\bigcup \mc G$ which, in the case of a generic filter, is a quite random and useful object.

A key property of forcing is that any formula $\phi$ which is true in the extension (i.e. $V^{\mb P}\models \phi$) is \emph{forced} by a condition  $p$ from the filter $\mc G$ (written as $p\force \phi$). Finally, in order to show that the forcing behaves nicely (i.e. no cardinals are collapsed) we will prove that our posets are ccc i.e. any set $Q\subset \mb P$ of uncountably many conditions contains $p\neq q\in Q$ with a common extension. The way to do this (in our case) is to find $p\neq q\in Q$ which are isomorphic and agree on their common vertices; this is done generally by applying the $\Delta$-system lemma and Lemma \ref{amalglemma}.

\begin{fact}[$\Delta$-system lemma]
 Suppose that $\mc S$ is an uncountable set of finite sets. Then there is a single finite set $r$ and uncountable $\mc R\subset \mc S$ so that $s\cap t=r$ for any $s\neq t\in \mc R$.
\end{fact}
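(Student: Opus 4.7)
The plan is to first reduce to the case where every set in $\mc S$ has the same finite cardinality, and then induct on that common cardinality $n$. For the reduction, note that the map $s\mapsto |s|$ sends $\mc S$ into $\omega$; since $\mc S$ is uncountable, some fiber is uncountable, so we may pass to an uncountable subfamily consisting entirely of sets of some fixed size $n$.

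For the base case $n=0$ the statement is trivial with $r=\emptyset$. For the inductive step, assume the lemma holds whenever all sets have cardinality at most $n$, and consider a family $\mc S$ of sets of size $n+1$. I would split into two cases depending on how often individual points appear. Case (A): some element $x$ lies in uncountably many members of $\mc S$. Then I apply the inductive hypothesis to the uncountable family $\{s\setm\{x\}:s\in\mc S,\ x\in s\}$ of sets of size $n$, obtaining a root $r'$ and an uncountable refinement $\mc R'$. Adding $x$ back gives the uncountable $\Delta$-system $\mc R=\{s'\cup\{x\}:s'\in\mc R'\}$ with root $r=r'\cup\{x\}$.

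Case (B): every element of $\bigcup\mc S$ lies in at most countably many members of $\mc S$. In this case I build a pairwise disjoint uncountable subfamily $\mc R\subseteq\mc S$ by transfinite recursion of length $\omega_1$, which is a $\Delta$-system with root $r=\emptyset$. Suppose $\{s_\beta:\beta<\alpha\}\subseteq\mc S$ have been chosen pairwise disjoint for some $\alpha<\omega_1$. The union $U=\bigcup_{\beta<\alpha} s_\beta$ is a countable set, since it is a countable union of finite sets. By the case assumption each $x\in U$ lies in only countably many members of $\mc S$, so at most countably many $s\in\mc S$ can meet $U$. Since $\mc S$ is uncountable, I pick $s_\alpha\in\mc S$ disjoint from $U$ and continue. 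After $\omega_1$ steps I have produced the desired uncountable disjoint family.

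The only mild subtlety is the bookkeeping in the recursion in Case (B)—in particular, ensuring that at each stage the set of ``bad'' members of $\mc S$ (those meeting $U$) is countable, which is exactly where the hypothesis of Case (B) is used. Everything else is a direct pigeonhole or a routine application of the inductive hypothesis, so I expect no substantial obstacle.
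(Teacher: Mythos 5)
The paper states the $\Delta$-system lemma as background without giving a proof, so there is nothing to compare against; your argument stands on its own. It is correct and is the standard textbook proof: reduce to sets of a fixed size $n$ by pigeonhole, then induct on $n$, splitting into the case where some point lies in uncountably many sets (strip it out, apply the inductive hypothesis, add it back) and the case where every point lies in only countably many sets (build an uncountable pairwise-disjoint subfamily by a recursion of length $\omega_1$, using that at each countable stage only countably many members of $\mc S$ can meet the countable union built so far). Two small points worth making explicit, both of which you handle implicitly and correctly: in Case (A) the map $s\mapsto s\setminus\{x\}$ is injective on the sets containing $x$, so the reduced family really is uncountable; and in Case (B) the sets produced are distinct because they are nonempty ($n+1\geq 1$) and pairwise disjoint.
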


Naturally, one can suppose that all elements of $\mc R$ have the same size and, in case of finite graphs, each $s\in \mc R$ carries the same graph.

}

\section{Obligatory subgraphs of digraphs with uncountable dichromatic number}\label{obligsec}

For directed graphs $D$, we can ask what implications does $\chr D>\omega$ have; in particular, what are those directed graphs that embed into any digraph $D$ with  $\chr D>\omega$? We will mention the undirected counterparts of our results as we proceed.

\begin{prop}
 Suppose that $\chr D >\omega$. Then there is $D_0\subseteq D$ so that $\chr{D_0}>\omega$ and each vertex in $D_0$ has infinite in and out degree.
\end{prop}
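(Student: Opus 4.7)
The plan is a transfinite removal of vertices with finite in- or out-degree in the current induced subgraph. First I would record the following key lemma:

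\textbf{Key lemma.} If every vertex of a digraph $D'$ has finite out-degree (or, dually, finite in-degree), then $\chr{D'}\leq \omega$.

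The idea is that under the finite-out-degree hypothesis the set of vertices reachable from any $v$ via directed paths is countable, so every strongly connected component of $D'$ is countable. Colouring each SCC by assigning its vertices distinct labels from $\omega$ and reusing the same palette across SCCs yields an acyclic decomposition: every directed cycle of $D'$ is contained in a single SCC, so no colour class contains a cycle.

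I then set $V_0 = V(D)$ and, at successors, define
\[A_\alpha = \{v\in V_\alpha : d^+_{D[V_\alpha]}(v) < \omega \text{ or } d^-_{D[V_\alpha]}(v) < \omega\}, \quad V_{\alpha+1} = V_\alpha \setm A_\alpha,\]
and at limits $V_\lambda = \bigcap_{\alpha<\lambda} V_\alpha$. Let $\gamma$ be the first ordinal with $A_\gamma = \emptyset$ and take $D_0 := D[V_\gamma]$. By construction every vertex of $D_0$ has infinite in- and out-degree in $D_0$, so I only need to verify $\chr{D_0} > \omega$.

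Splitting $A_\alpha$ according to which of the two degrees is finite, each part inherits the finiteness in $D[A_\alpha]$, and the key lemma gives $\chr{D[A_\alpha]} \leq \omega$. Together with the elementary subadditivity $\chr{D[X \cup Y]} \leq \chr{D[X]} + \chr{D[Y]}$ (any acyclic set restricted to an induced subgraph is still acyclic), a transfinite induction shows $\chr{D[V_\alpha]} > \omega$ is preserved at successor steps (where $\chr{D[V_{\alpha+1}]} \geq \chr{D[V_\alpha]} - \omega = \chr{D[V_\alpha]}$ in cardinal arithmetic) and at countable-cofinality limit steps (where the countable decomposition $V_0 \setm V_\lambda = \bigsqcup_{\alpha<\lambda} A_\alpha$ has $\chr\leq \omega$).

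The main obstacle is limit ordinals of uncountable cofinality, where naive subadditivity only yields a bound of size $|\lambda|\cdot \omega$. I would address this by an initial Löwenheim–Skolem reduction to $|V(D)| = \aleph_1$ (via a cardinality-$\aleph_1$ elementary submodel of $H(\theta)$ containing $D$, which restricts to an induced subdigraph still of uncountable dichromatic number). In this setting either the iteration terminates before $\omega_1$, in which case the argument above already concludes, or it runs for $\omega_1$ stages with $V(D) = \bigsqcup_{\alpha<\omega_1}A_\alpha$; in the latter case one reassembles the separate countable acyclic covers of the $D[A_\alpha]$ into a single countable acyclic cover of $V(D)$ by reusing a common $\omega$-palette, exploiting that each $v\in A_\alpha$ has only finitely many neighbours in $V_\alpha$ in one direction, which would contradict $\chr{D} > \omega$.
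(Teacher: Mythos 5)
Your overall strategy -- iteratively strip vertices of finite in- or out-degree and argue that the discarded part has countable dichromatic number -- is the right one, and your key lemma is correct, but two of the supporting steps do not hold up. First, the Löwenheim--Skolem reduction is unjustified: from $D\in M\prec H(\theta)$ with $|M|=\aleph_1$ it does \emph{not} follow that $\chr{D[V(D)\cap M]}>\omega$. Uncountable (di)chromatic number is not automatically reflected by size-$\aleph_1$ induced subgraphs, and no ZFC theorem gives you this. The paper instead chooses $D$ of \emph{minimal} cardinality among counterexamples, which yields exactly the opposite information (every strictly smaller induced subgraph has $\chr\leq\omega$) and is a valid reduction -- your elementarity-based claim is the one step that is simply false as stated.

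Second, at a limit of uncountable cofinality your reassembly is not an argument yet. Reusing a common $\omega$-palette on each $D[A_\alpha]$ does nothing to prevent a directed cycle that visits several distinct $A_\alpha$'s, and the key lemma does not apply to $D\bigl[\bigcup_\alpha A_\alpha^+\bigr]$ because a vertex removed at stage $\alpha>0$ may have \emph{infinite} out-degree into earlier stages $A_\beta$, $\beta<\alpha$. The observation you ``exploit'' -- that $v\in A_\alpha$ has only finitely many out- (or in-) neighbours inside $V_\alpha$ -- is precisely the right one, but the tool needed to convert it into a countable acyclic cover is Fodor's set-mapping theorem, which you never invoke. Well-order the removed vertices by stage of removal; on the finite-out part the map $v\mapsto N^+(v)\cap\{\text{later vertices}\}$ is then a finite-valued set mapping, so Fodor decomposes the finite-out part into countably many free sets, and in a free set every arc points backwards, hence each piece is acyclic. (This is exactly what the paper does, except it removes one vertex at a time and so gets the required well-order for free.) Once you use Fodor, which works at every cardinality, the $\aleph_1$ reduction becomes unnecessary, so the problematic Löwenheim--Skolem step can simply be dropped.
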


{\DD We thank one of our anonymous referees for simplifying the original proof of this result.}

\begin{proof}
 Suppose that $D$ is a counterexample to the statement with minimal cardinality; in particular, any subgraph of $D$ with size $<|D|$ has countable dichromatic number. Now, we can find an ordinal $\kappa$ and vertices $\{v_\alpha:\alpha<\kappa\}$ so that $\chr{D[V\setm \{v_\alpha:\alpha<\kappa\}]}\leq \omega$ and {\DD for all  $\alpha\in \kappa$} either 
\begin{equation}\label{c1}
 |\{\beta\in \kappa\setm \alpha: {v_\alpha v_\beta}\in E(D)\}|<\oo
\end{equation}
or 
\begin{equation}\label{c2}
|\{\beta\in \kappa\setm \alpha: {v_\beta v_\alpha}\in E(D)\}|<\oo.
\end{equation} 
Indeed, given vertices $v_\alpha$ for $\alpha<\beta$ we look at $D[V\setm \{v_\alpha:\alpha<\beta\}]$: if this digraph has countable dichromatic number then we stop. Otherwise, there must be a vertex $v_\beta\in V\setm\{v_\alpha:\alpha<\beta\}$ so that $v_\beta$ has only finitely many in or finitely many out neighbours in $D[V\setm \{v_\alpha:\alpha<\beta\}]$.

 We let $V^+$ and $V^-$ denote the set of $v_\alpha$ so that (\ref{c1}) or (\ref{c2}) above holds, respectively. 

Now, it suffices to show that $\chr{D[V^+]}\leq \omega$ and $\chr{D[V^-]}\leq \omega$ holds. This, together with $\chr{D[V\setm \{v_\alpha:\alpha<\kappa\}]}\leq \omega$ implies that $\chr D\leq \oo$ which is a contradiction.

{\DD Consider $V^+$ and the set-mapping $F^+$ defined by $v_\alpha\mapsto \{v_\beta\in N^+(\alpha):\alpha<\beta\}$. By Fodor's theorem \cite{fodor}, $V^+$ is the union of countably many $F^+$-free sets $\{V^+_i:i<\oo\}$ i.e. $u\notin F^+(v)$ if $u\neq v\in V^+_i$. In other words, each arc of $D[V^+_i]$ goes down with respect to the well order we defined and so $D[V^+_i]$ is acyclic. The argument for $V^-$ is completely analogous.}

\end{proof}

\begin{cor}\label{treeembed}
 $\arr{P_\omega}$ embeds into $D$ whenever $\chr D >\omega$. Moreover, if $T$ is any orientation of the everywhere $\oo$-branching rooted tree then $T$ embeds into $D$ whenever $\chr D >\omega$.
\end{cor}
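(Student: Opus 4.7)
The plan is to use the preceding proposition to reduce to a digraph $D_0 \subseteq D$ in which every vertex has both infinite in-degree and infinite out-degree (and $\chr{D_0} > \omega$, so in particular $V(D_0) \neq \emptyset$). Once we are inside $D_0$, both statements reduce to a completely standard greedy construction: at each step we must avoid finitely many already-used vertices while choosing from an infinite neighbourhood.

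For the first claim, I would fix any $v_0 \in V(D_0)$ and recursively choose
\[
v_{n+1} \in N^+_{D_0}(v_n) \setminus \{v_0, v_1, \ldots, v_n\}.
\]
Such a choice exists because $|N^+_{D_0}(v_n)| = \omega$. The map $n \mapsto v_n$ is then an injective digraph homomorphism of $\arr{P_\oo}$ into $D_0$, hence into $D$.

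For the second claim, let $T$ be an orientation of the countable everywhere $\oo$-branching rooted tree. Enumerate its vertices as $t_0, t_1, \ldots$ in such a way that $t_0$ is the root and the unique parent $t_{p(i)}$ of each $t_i$ (for $i > 0$) satisfies $p(i) < i$ (a breadth-first enumeration works). For each $i > 0$ let $\epsilon_i \in \{+,-\}$ record whether the edge of $T$ between $t_{p(i)}$ and $t_i$ is oriented from $t_{p(i)}$ to $t_i$ or the other way. Define an embedding $\varphi : V(T) \to V(D_0)$ recursively: pick $\varphi(t_0) \in V(D_0)$ arbitrarily and, having defined $\varphi(t_j)$ for all $j < i$, set
\[
\varphi(t_i) \in N^{\epsilon_i}_{D_0}(\varphi(t_{p(i)})) \setminus \{\varphi(t_j) : j < i\},
\]
which is nonempty since $N^{\epsilon_i}_{D_0}(\varphi(t_{p(i)}))$ is infinite and only finitely many images have been used. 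Since every edge of $T$ arises between some $t_i$ and its parent $t_{p(i)}$, the map $\varphi$ is an injective digraph homomorphism, so $T \hookrightarrow D_0 \subseteq D$ in the sense of a not-necessarily-induced subgraph.

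There is no real obstacle here beyond setting up the enumeration of $T$ so that each vertex is treated after its parent; both greedy selections work because the preceding proposition guarantees that every vertex of $D_0$ has both $N^+$ and $N^-$ of size at least $\omega$, which is exactly what the argument consumes at each step.
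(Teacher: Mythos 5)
Your reduction to $D_0$ via the preceding proposition, followed by a greedy construction using only that each vertex of $D_0$ has infinite in- and out-degree, is exactly the standard argument the paper alludes to (it cites the undirected version in Erd\H{o}s--Hajnal and says it follows similar proofs, without spelling it out). The first claim and the overall structure of the second are fine.

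One small slip: a breadth-first enumeration does \emph{not} work for the everywhere $\omega$-branching tree, since BFS would have to exhaust the infinitely many children of the root before it ever reaches a grandchild, so it never gets past depth $1$. What you actually need --- and what is easy to get --- is any enumeration $t_0,t_1,\dots$ of $V(T)$ in which every vertex is preceded by its (finitely many) ancestors. For instance, identify $V(T)$ with $\omega^{<\omega}$, the root being $\emptyset$, and list $s\in\omega^{<\omega}$ in increasing order of $\max\bigl(|s|,\max(s)\bigr)$; each value of this quantity is attained by only finitely many $s$, and every proper initial segment of $s$ has a value $\le$ that of $s$, so after sorting within each finite block by length one obtains the required enumeration. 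With that correction your recursion goes through verbatim. (Your bookkeeping with $\epsilon_i\in\{+,-\}$ and the choice $\varphi(t_i)\in N^{\epsilon_i}_{D_0}(\varphi(t_{p(i)}))\setminus\{\varphi(t_j):j<i\}$ is correct in both orientations.)
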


The undirected version of the above lemma and corollary appeared in \cite{EH0} and we followed similar proofs.

Before proceeding further, we mention that the set of obligatory digraphs for graphs with $\chr D >\omega$ is closed under a simple operation: let $\text{rev}(D_0)$ denote the digraph on vertices $V(D_0)$ and edges $\{uv:vu\in E(D_0)\}$.

\begin{obs}\label{revobs}
 If $D_0\hookrightarrow D$ for every $D$ such that $\chr D >\omega$ then {\DD $\text{rev}(D_0)\hookrightarrow D$ for every $D$ such that $\chr D >\omega$ as well.}
\end{obs}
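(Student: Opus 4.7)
The plan is to exploit the symmetry of the dichromatic number under arc reversal. The key observation is that a vertex set $A \subseteq V(D)$ is acyclic in $D$ if and only if it is acyclic in $\text{rev}(D)$: any directed cycle $a_0 a_1 \dots a_{\ell-1} a_0$ in $D[A]$ reverses to a directed cycle $a_0 a_{\ell-1} \dots a_1 a_0$ in $\text{rev}(D)[A]$, and vice versa. Consequently $\chr{D} = \chr{\text{rev}(D)}$ for every digraph $D$, since the two digraphs have the same acyclic sets and therefore admit the same covers by acyclic sets.

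Given this, the proof is immediate. Fix a digraph $D$ with $\chr{D} > \omega$. By the invariance just noted, $\chr{\text{rev}(D)} > \omega$ as well. Applying the hypothesis to $\text{rev}(D)$ yields an embedding $\varphi : V(D_0) \to V(\text{rev}(D)) = V(D)$ witnessing $D_0 \hookrightarrow \text{rev}(D)$.

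I would then verify that the same map $\varphi$ witnesses $\text{rev}(D_0) \hookrightarrow D$. Indeed, for any arc $uv \in E(\text{rev}(D_0))$, by definition $vu \in E(D_0)$, so $\varphi(v)\varphi(u) \in E(\text{rev}(D))$, which means $\varphi(u)\varphi(v) \in E(D)$. Thus $\varphi$ is a digraph embedding of $\text{rev}(D_0)$ into $D$, as required.

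There is no real obstacle here; the argument is a one-line symmetry observation followed by a formal unfolding of definitions. If anything, the only thing worth being careful about is that the paper's definition forbids multi-arcs (so $uv \in E$ implies $vu \notin E$), which keeps $\text{rev}(D)$ well-defined as a digraph in the sense used throughout.
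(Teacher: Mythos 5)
Your argument is correct and is essentially the same as the paper's one-line proof: note that $\chr{\text{rev}(D)}=\chr{D}$, apply the hypothesis to $\text{rev}(D)$ to get $D_0\hookrightarrow\text{rev}(D)$, and then read that embedding backwards to get $\text{rev}(D_0)\hookrightarrow\text{rev}(\text{rev}(D))=D$. You simply unfold the last step in more detail, which is fine.
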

\begin{proof}
 
Indeed, note that $\chr{\text{rev}(D)}=\chr{D}$ so $D_0\hookrightarrow \text{rev}(D)$ as well which implies that {\DD $\text{rev}(D_0)\hookrightarrow \text{rev}(\text{rev}(D))=D$.}

\end{proof}

One of the strongest results on obligatory subgraph was found by A. Hajnal and P. Komj\'ath: the half graph $H_{\omega,\omega}$ embeds into any graph $G$ with $\chr G>\omega$ \cite{half}. Recall that $H_{\omega,\omega}$ is the graph defined on vertices $\oo\times 2$ and $(k,i)(\ell,j)$ is an edge if and only if $k\leq \ell<\oo$ and $i=0,j=1$.

There are two simple orientations of $\half$: $(k,0)(\ell,1)$ is an arc if and only if $k\leq \ell<\oo$ or $(\ell,1)(k,0)$ is an arc if and only if $k\leq \ell<\oo$. We will denote these graphs by $\arr{\half}$ and $\larr \half$, respectively.

\begin{prop}\label{halfembed}
 $\arr{\half}$ and $\larr \half$ both embed into $D$ if $\chr{D}>\oo$.
\end{prop}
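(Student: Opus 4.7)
The plan is to reduce to the undirected Hajnal--Komj\'ath theorem cited immediately before the statement, and then use Ramsey's theorem to pick out consistent orientations. First observe that by Observation \ref{revobs} it suffices to prove $\arr{\half}\hookrightarrow D$ for every $D$ with $\chr{D}>\oo$, since $\text{rev}(\arr{\half})=\larr{\half}$; the embedding of $\larr{\half}$ will then follow automatically.

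Let $G$ denote the underlying undirected graph of $D$. Every independent set of $G$ is (trivially) acyclic in $D$, so $\chr{D}\leq \chr{G}$ and therefore $\chr{G}>\oo$. The Hajnal--Komj\'ath theorem then yields a copy of $\half$ inside $G$; fix vertices $\{a_k:k<\oo\}\cup \{b_\ell:\ell<\oo\}$ in $V(D)$ so that $a_k$ and $b_\ell$ are joined by an arc of $D$ (in exactly one of the two possible directions) if and only if $k\leq \ell$.

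For every pair $k<\ell<\oo$, colour $\{k,\ell\}\in[\oo]^2$ according to the direction of the arc between $a_k$ and $b_\ell$ in $D$. By Ramsey's theorem, there is an infinite $J\subseteq \oo$ monochromatic for this $2$-colouring; enumerate $J=\{m_i:i<\oo\}$ in increasing order and set
$$a'_k:=a_{m_{2k}},\qquad b'_\ell:=b_{m_{2\ell+1}}\qquad (k,\ell<\oo).$$
Whenever $k\leq \ell$ we have $m_{2k}<m_{2\ell+1}$, so the edge between $a'_k$ and $b'_\ell$ genuinely exists in our copy of $\half$, and its orientation is governed by the Ramsey colour. Hence either every such arc points from $a'_k$ to $b'_\ell$, giving $\arr{\half}\hookrightarrow D$, or every such arc points from $b'_\ell$ to $a'_k$, giving $\larr{\half}\hookrightarrow D$. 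In either case Observation \ref{revobs} supplies the remaining embedding.

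The only subtlety is the ``diagonal'' edges $a_k b_k$ of $\half$, which a Ramsey colouring on pairs from $[\oo]^2$ cannot directly control; the shift by one in the indexing ($2k$ versus $2\ell+1$) sidesteps this issue by ensuring every pair of indices arising in the sub-copy is strictly off-diagonal, and is therefore governed by the Ramsey colour.
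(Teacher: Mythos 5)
Your Ramsey reduction establishes less than you claim, and the place it breaks is the second invocation of Observation~\ref{revobs}. The first reduction is sound: by~\ref{revobs}, if one shows $\arr{\half}\hookrightarrow D$ for \emph{every} $D$ with $\chr D>\oo$, then $\larr\half$ embeds into every such $D$ as well. But your Ramsey argument, applied to a fixed $D$, produces a monochromatic sub-copy whose colour you do not control; it only yields the disjunction ``$\arr{\half}\hookrightarrow D$ \emph{or} $\larr\half\hookrightarrow D$.'' In the case where the Ramsey colour gives $\larr\half\hookrightarrow D$, you cannot then appeal to Observation~\ref{revobs} to recover $\arr\half\hookrightarrow D$: that observation is a statement about \emph{obligatory} subgraphs (``embeds into every $D$ with $\chr D>\oo$''), not a statement that transfers an embedding in a single $D$ to the reversed digraph in that same $D$. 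A priori, before the proposition is proved, nothing rules out distinct $D_1,D_2$ with uncountable dichromatic number where only $\larr\half$ embeds into $D_1$ and only $\arr\half$ embeds into $D_2$. Your argument therefore proves only the disjunction for each $D$, which is strictly weaker than the statement.

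The paper's proof takes a different route that avoids this issue: it supposes $D$ is a minimal-size digraph with $\chr D>\oo$ into which $\arr\half$ does \emph{not} embed, covers $D$ by a continuous chain of elementary submodels $(M_\xi)$, and shows that every $v\in M_{\xi+1}\setm M_\xi$ has only finitely many in-neighbours inside $M_\xi$ (otherwise one builds $\arr\half$ using Fact~\ref{elfact} to find infinitely many common out-neighbours). This finiteness is then used to combine colourings witnessing countable dichromatic number on the blocks $M_{\xi+1}\setm M_\xi$ (available by minimality) into a single countable colouring of $D$ with no monochromatic directed cycle, a contradiction. That argument directly targets $\arr\half$ alone, so the once-only appeal to Observation~\ref{revobs} at the outset is legitimate. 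To salvage your Ramsey approach you would need some additional idea that forces the Ramsey colour, or at least lets you find copies of $\half$ of both ``types''; as written, the claim that ``Observation~\ref{revobs} supplies the remaining embedding'' does not hold.
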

\begin{proof}
It suffices to prove for  $\arr{\half}$ by Observation \ref{revobs}. Suppose that $D$ is a digraph on vertex set $V$ without a copy of $\arr{\half}$ so that  $\chr{D}>\oo$. Let us also suppose that $D$ has minimal size among these graphs. Cover $D$ by a continuous chain of elementary submodels $(M_\xi)_{\xi<\zeta}$ so that $|M_\xi|<|D|$ and $D\in M_\xi$.

\begin{tclaim}
 If $v\in V\cap M_{\xi+1}\setm M_\xi$ then $N^-(v)\cap M_\xi$ is finite.
\end{tclaim}

Indeed, suppose that $x_0,x_1\dots \in N^-(v)\cap M_\xi$. The set $N^+[\{x_i:i<n\}]$ must be uncountable otherwise  $N^+[\{x_i:i<n\}]\subseteq M_\xi$ and so $v\in M_\xi$. Hence, we can find distinct $y_0,y_1\dots $ so that $y_n\in N^+[\{x_i:i<n\}]$. Now $\arr{\half}\hookrightarrow D[\{x_i,y_i:i<\oo\}]$. This contradicts our assumption that $\arr{\half}$  does not embed into $D$.


By the minimal size of $D$, there are maps $f_\xi:V\cap M_{\xi+1}\setm M_\xi\to \oo$ so that there are no monochromatic cycles with respect to $f_\xi$. Define $f=(f^1,f^2):V\to \oo\times \oo$ so that $f^1=\bigcup_{\xi<\zeta} f_\xi$ and $f^2(v)\neq f^2(w)$ if $v\in V\cap M_{\xi+1}\setm M_\xi$ and $w\in N^-(v)\cap M_\xi$. This can be done as $N^-(v)\cap M_\xi$ is finite.

We claim that $f$ witnesses that $\chr D\leq \oo$ which is a contradiction. Indeed, the definition of $f^1$ guarantees that if $C$ is monochromatic with respect to $f$ then $C$ must have an arc of the form $wv$ with $v\in V\cap M_{\xi+1}\setm M_\xi$ and $w\in N^-(v)\cap M_\xi$. But in this case $f^2(v)\neq f^2(w)$
\end{proof}

At this point, we are uncertain of exactly what orientations of $\half$ must embed into any $D$ with $\chr{D}>\oo$. 


\subsection{Cycles and dichromatic number}

Erd\H os proved in the groundbreaking \cite{prob} that there are graphs with arbitrary large finite chromatic number and arbitrary large girth. Rather surprisingly this fails for uncountable chromatic number: if $\chr G>\oo$ then $G$ contains a 4-cycle. This was originally proved in \cite{EH0} but also follows from the fact that $\half$ embeds into $G$ if $\chr G>\oo$.

Now, for finite directed graphs the analogue of Erd\H os' thereom was proved by Bokal et al \cite{bokal}: there are digraphs with arbitrary large finite dichromatic number without short directed cycles. At this point, it is somewhat unexpected that this result extends to uncountably dichromatic directed graphs as well:

\begin{theorem}\label{digirth}
Consistently, for each natural number $n\geq 3$ there is a digraph $D$ on vertex set $\omg$ so that
\begin{enumerate}
 \item $D$ has {\DD digirth} bigger than $n$, and
\item  $\arr{C}_{n+1}\hookrightarrow D[X]$ for every uncountable $X\subseteq \omg$.
\end{enumerate}
In particular, $\chr D=\omg$. 
\end{theorem}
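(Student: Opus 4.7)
The plan is to force with finite approximations. Fix $n\geq 3$ and define the poset $\mb P_n$ whose conditions are finite digraphs $p=(V_p,E_p)$ with $V_p\in [\omg]^{<\oo}$ and digirth $>n$, ordered by $q\leq p$ iff $V_p\subseteq V_q$ and $p=q[V_p]$ (i.e., no new arcs are inserted between old vertices). A simple density argument ensures that in the extension $V^{\mb P_n}$ the generic digraph $D=\bigcup \mc G$ has vertex set $\omg$; and $D$ has digirth $>n$ since any short directed cycle already occurs in some single condition. To obtain the full theorem we iterate or take a finite support product over $n\geq 3$.

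The ccc of $\mb P_n$ follows from the $\Delta$-system lemma: given $\{p_\xi:\xi<\omg\}\subseteq \mb P_n$, we find an uncountable $\mc R$ and a common root $r$ such that the $V_{p_\xi}$ form a $\Delta$-system with root $r$, all $p_\xi$ have the same size and isomorphism type under the order-preserving bijection $\psi_{\xi,\eta}:V_{p_\xi}\to V_{p_\eta}$, and $\psi_{\xi,\eta}\uhr r=\id_r$. Lemma \ref{amalglemma}(2) then makes $p_\xi\cup p_\eta$ a common extension of digirth $>n$.

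For clause (2), let $\dot X$ be a $\mb P_n$-name forced by $p_0$ to be an uncountable subset of $\omg$. Pick $\{p_\xi:\xi<\omg\}\leq p_0$ and ordinals $\gamma_\xi$ with $p_\xi\force \gamma_\xi\in \dot X$; since $\dot X$ is forced uncountable we may assume all $\gamma_\xi$ are distinct and, by strengthening, $\gamma_\xi\in V_{p_\xi}$. Apply the $\Delta$-system/isomorphism argument above, pigeonholing further on the position of $\gamma_\xi$ in the enumeration of $V_{p_\xi}\setm r$, so that $\psi_{\xi,\eta}(\gamma_\xi)=\gamma_\eta$ for all $\xi,\eta\in\mc R$; the condition $\dot X$ is forced uncountable guarantees that only finitely many $\gamma_\xi$ can lie in the finite root $r$, so $\gamma_\xi\in V_{p_\xi}\setm r$ can be arranged. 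Now pick any $\xi_0<\dots<\xi_n$ from $\mc R$ and form $q_0=\bigcup_{i\leq n}p_{\xi_i}$, which has digirth $>n$ by Lemma \ref{amalglemma}(2). Finally, let $q$ be obtained from $q_0$ by adding the arcs $\gamma_{\xi_0}\gamma_{\xi_1},\dots,\gamma_{\xi_{n-1}}\gamma_{\xi_n},\gamma_{\xi_n}\gamma_{\xi_0}$. Applying Lemma \ref{amalglemma}(3) with $k=n$ and $n+1>k$ copies shows that $q\in \mb P_n$; and $q$ forces that $\{\gamma_{\xi_0},\dots,\gamma_{\xi_n}\}\subseteq \dot X$ carries a copy of $\arr C_{n+1}$ in $D$. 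A standard ccc name-counting argument converts this into the statement that for every uncountable $X\subseteq \omg$ in the extension, $\arr C_{n+1}\hookrightarrow D[X]$.

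The main obstacle is the set-up of the $\Delta$-system: one must simultaneously homogenize the $\Delta$-system, the isomorphism type of each $p_\xi$, and the relative position of the distinguished vertex $\gamma_\xi$, so that the amalgamation map $\psi_{\xi,\eta}$ actually moves $\gamma_\xi$ to $\gamma_\eta$. Once this coherence is achieved, clauses (2) and (3) of Lemma \ref{amalglemma} do exactly the work needed to produce a single $(n+1)$-cycle on $\dot X$ while forbidding any shorter one, which is the essential combinatorial tension of the statement.
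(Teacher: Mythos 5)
Your proposal is correct and follows essentially the same route as the paper's proof: the same poset of finite digraphs of digirth $>n$, ccc via twin conditions and Lemma \ref{amalglemma}(2), and densely forcing an $(n{+}1)$-cycle into $\dot X$ by amalgamating $n{+}1$ isomorphic twins carrying the distinguished vertices $\gamma_{\xi_i}$ and closing them into a cycle with Lemma \ref{amalglemma}(3). Your explicit remarks about pigeonholing the position of $\gamma_\xi$ outside the root so that $\psi_{\xi,\eta}(\gamma_\xi)=\gamma_\eta$ are a spelled-out version of what the paper leaves implicit in its appeal to the $\Delta$-system lemma.
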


\begin{proof}
 We show that for any $n$ there is a ccc poset of size $\omega_1$ which introduces such a digraph $D$. We leave it to the reader to check that the finite support product or iteration of these countably many posets gives a model with the appropriate graphs for each $n$ at the same time.

Fix $n\geq 3$ and simply let $\mb P$ be the set of all finite digraphs on a subset of $\omg$ which avoid $\arr{C_k}$ for $3\leq k\leq n$ i.e. each $p\in\mb P$ is a finite digraph $(V(p),E(p))$ with digirth $>n$. We write $p\leq q$ for $p,q\in \mb P$ if $V(p)\supseteq V(q)$ and $p[V(q)]=q$.

We say that $p,q\in \mb P$ are twins if $|V(p)|=|V(q)|$, $V(p)\cap V(q)<V(p)\setm V(q)<V(q)\setm V(p)$ (or vica versa $V(q)\setm V(p)<V(p)\setm V(q)$) and the unique order preserving map $\psi_{p,q}$ from $V(p)$ to $V(q)$ is a digraph isomorphism of $p$ and $q$. Note that if $p,q$ are twins then $p\cup q$ is a digraph as well.

Clearly, any generic filter $\mc G\subseteq \mb P$ gives a digraph $\dot D$ on vertex set $\omg$ with $E(\dot D)=\bigcup\{E(p):p\in \mc G\}$.

\begin{tclaim}\label{cccclaim}
 $\mb P$ is ccc.
\end{tclaim}
\begin{proof} Note that any uncountable set of conditions contains an uncountable subset of pairwise twins by the $\Delta$-system lemma. Now, we claim that $p\cup q$ is a condition if $p,q$ are twins. Indeed, apply Lemma \ref{amalglemma} (2).

\end{proof}

The next claim finishes the proof of the theorem:

\begin{tclaim}\label{amalgclaim}
 $V^{\mb P}\models \arr{C}_{n+1}\hookrightarrow \dot D[\dot X]$ for every uncountable $\dot X\subseteq \omg$.
\end{tclaim}
\begin{proof} Suppose that $p\force |\dot X|=\omg$. There is an uncountable $Y\subseteq \omg$ and $p_\beta\in \mb P$ for $\beta \in Y$ so that $p_\beta\leq p$, $ \beta \in V(p_\beta)$ and $p_\beta\force \beta\in \dot X$. Apply the $\Delta$-system lemma to find $\alpha_0<\alpha_1<\dots<\alpha_n\in Y$ so that $p_{\alpha_i}$ and $p_{\alpha_j}$ are twins whenever $i<j<n+1$.

Define $q$ by letting
\begin{equation} 
V(q)=\bigcup_{i<n+1}V(p_{\alpha_i}) \text{ and } E(q)=\bigcup_{i<n+1}E(p_{\alpha_i})\cup \{\alpha_n\alpha_0,\alpha_i\alpha_{i+1}:i<n\}.
\end{equation}
 Lemma \ref{amalglemma} (3) implies that $q\in \mb P$ and of course $q\leq p_{\alpha_i}$. We clearly have $q\force \dot D[\alpha_0\dots\alpha_n]\hookrightarrow D[\dot X]$ and that $q\force \dot D[\alpha_0\dots\alpha_n]$ is an induced copy of $\arr{C}_{n+1}$.

\end{proof}

\end{proof}

\begin{cor} {\DD Consistently, any digraph $D_0$ which embeds into all digraphs $D$ with $\chr D>\oo$ must be acyclic.}
\end{cor}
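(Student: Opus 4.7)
The plan is to read the corollary as a direct contrapositive of Theorem \ref{digirth}. I work inside the consistent model produced by that theorem, in which, for every integer $n\geq 3$, there exists a digraph $D_n$ on $\omg$ with digirth $>n$ and $\chr{D_n}=\omg$. (The remark in the proof of Theorem \ref{digirth} about taking the finite-support product or iteration of the countably many posets guarantees that all these $D_n$ coexist in a single model, which is exactly what the corollary requires.)

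Now suppose, toward the contrapositive, that $D_0$ is a digraph which is \emph{not} acyclic, and fix any directed cycle of $D_0$ of some length $k\geq 3$. The embedding relation $\hookrightarrow$ defined in the preliminaries only demands edge-preservation (the copy need not be induced), so any embedding $D_0\hookrightarrow D_k$ would transport this length-$k$ directed cycle into $D_k$, contradicting the fact that $D_k$ has digirth strictly greater than $k$. Therefore $D_0\not\hookrightarrow D_k$, even though $\chr{D_k}=\omg>\oo$, which means that $D_0$ is not an obligatory subgraph of digraphs with uncountable dichromatic number.

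Contrapositively, in this model, every digraph $D_0$ that embeds into every digraph $D$ with $\chr D>\oo$ must be acyclic, which is exactly the corollary. I do not anticipate any real obstacle here: the entire argument is a two-line unpacking of Theorem \ref{digirth}, and the only small point to double-check is that the finite-support iteration really supplies the full family $\{D_n:n\geq 3\}$ in a single extension (already asserted in the proof of Theorem \ref{digirth}).
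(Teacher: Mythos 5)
Your proof is correct and is exactly the intended (and only sensible) argument: the corollary is the immediate contrapositive of Theorem \ref{digirth} once one works in the single model where all the digraphs $D_n$ with large digirth and $\chr{D_n}=\omega_1$ coexist. The paper gives no separate proof precisely because it is this two-line unpacking, including your small caveat about reading off the simultaneous model from the product/iteration remark.
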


{\DD We don't know at this point how to construct digraphs with uncountable dichromatic number but with arbitrary large digirth in ZFC.}

Now, the fact that $C_4$ appears in every graph $G$ with $\chr G>\oo$ shows that the relation $$G \to (C_4)^1_\oo$$ is equivalent to $\chr G>\oo$. The digraph version is (consistently) false by the above theorem, however at this point it seems possible that $\chr D>\oo$ implies  $D\to (\arr{C}_k)^1_\oo$ for some $k<\oo$ for any $D$. We show now that this is not the case.  Let us denote the set of nonzero, nondecreasing $f:\mb N\to \mb N$ so that $\displaystyle{\lim_{k\to \infty} f(k)=\infty}$ with $\mc F$ for the next proof.

\begin{theorem}\label{noarrowcon}
 Consistently, for $f\in \mc F$ there is a digraph $D=D_f$ on vertex set $\omg$ so that
\begin{enumerate}
 \item $\chr{D}=\omg$, and
\item  $D\nrightarrow (\arr{C}_{k})^1_{f(k)}$ for all $k<\oo$.
\end{enumerate}
\end{theorem}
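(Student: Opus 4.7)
The plan is to construct the digraph $D=D_f$ and the witnessing colorings $c_k:\omg\to f(k)$ simultaneously via a ccc forcing $\mb P_f$. A condition in $\mb P_f$ is a tuple $p=(G^p, K^p, (c^p_k)_{3\leq k\leq K^p})$ where $G^p$ is a finite digraph on $V^p\subseteq \omg$, $K^p\geq 3$ is the \emph{stage} of $p$, and each $c^p_k:V^p\to f(k)$ is a coloring such that every color class $(c^p_k)^{-1}(i)$ induces a subdigraph of $G^p$ of digirth $>k$. The ordering is the natural extension: $p\leq q$ iff $V^p\supseteq V^q$, $G^p[V^q]=G^q$, $K^p\geq K^q$, and $c^p_k\uhr V^q=c^q_k$ for $3\leq k\leq K^q$.

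To show $\mb P_f$ is ccc, given uncountably many conditions I would pigeonhole first on the stage $K^p$ and on $|V^p|$, apply the $\Delta$-system lemma to obtain a common root $R$, and finally extract uncountably many pairwise \emph{color-preserving twins}---conditions whose vertex sets are order-isomorphic via bijections fixing $R$ that are digraph isomorphisms and preserve every $c_k$ up to the common stage $K^*$ (only finitely many such types exist on a set of fixed size once $K^*$ is fixed). Amalgamation of two twins $p, q$ into $r=p\cup q$ with stage $K^*$ preserves the defining condition: applying Lemma \ref{amalglemma}(2) separately to each color-class subdigraph $G^r[(c^r_k)^{-1}(i)]$ shows that every color class of every $c^r_k$ retains digirth $>k$. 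Together with standard density arguments (adding isolated vertices and raising the stage via suitable choices of $c_{K^p+1}$) ensuring that $\dot D=\bigcup\{G^p:p\in\mc G\}$ and $\dot c_k=\bigcup\{c^p_k:p\in\mc G,\, K^p\geq k\}$ give a total digraph on $\omg$ and total colorings, this yields condition (2): by the defining condition each color class of each $\dot c_k$ has digirth $>k$ in $\dot D$ and so contains no $\arr{C}_k$, whence $\dot c_k$ witnesses $\dot D\nrightarrow (\arr{C}_k)^1_{f(k)}$.

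The final and most delicate step is to show $\chr{\dot D}=\omg$, analogous to Claim \ref{amalgclaim}. Given a name for an uncountable $\dot X\subseteq\omg$ and a condition $p$, I would pick conditions $p_\alpha\leq p$ for $\alpha$ in some uncountable $Y\subseteq\omg$ with $\alpha\in V^{p_\alpha}$ and $p_\alpha\force \alpha\in\dot X$; after the $\Delta$-system$+$twinning reduction above I obtain an uncountable family of twins with a common stage $K^*$. Selecting $n$ of them with $n>K^*$ and applying Lemma \ref{amalglemma}(3), I would add the directed $n$-cycle on the twin-images $\alpha_0,\dots,\alpha_{n-1}$ of the marked vertex. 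Since the $\alpha_i$'s are twinned they share the same $c_k$-color for each $k\leq K^*$, so the new cycle lies inside a single color class of each $c_k$; but its length $n>K^*\geq k$ ensures that the digirth $>k$ condition is preserved in every color class. Hence the amalgamation is valid and forces a directed cycle in $\dot D[\dot X]$, which is enough to conclude $\chr{\dot D}=\omg$. The main obstacle is precisely this tension: adding cycles in every uncountable set (needed for $\chr{\dot D}=\omg$) while preserving the digirth $>k$ condition on every color class for every active $k$. The stage parameter $K^p$ is what resolves it, by keeping the set of active $k$'s finite at each stage and so leaving room to choose $n$ as large as needed.
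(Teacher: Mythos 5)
Your poset is structurally very close to the one in the paper, but there is one decisive structural difference, and it is precisely where the gap lives. The paper ties the stage to the size of the condition, $n^p=|V^p|$, whereas you treat the stage $K^p$ as a free parameter. This difference lets you avoid the paper's delicate case analysis in the key amalgamation step: by keeping the amalgam's stage at the common $K^*$ and choosing $n>K^*$ twins, every active $c^q_k$ ($k\leq K^*$) satisfies $n>k$, so Lemma~\ref{amalglemma}(3) applies directly and the new monochromatic $n$-cycle preserves the digirth constraints automatically. That shortcut is genuinely simpler than the paper's argument, which (because its stage jumps to $|V^q|\gg N$ the moment it amalgamates) is forced into a four-way case distinction on $i$ and the explicit recolouring of $\alpha_0$ with the fresh colour $f(n)$ for all $k\geq N$, plus a separate homomorphism argument in the last case.

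The price you pay is the density of raising the stage, which you wave off as ``standard.'' It is not. To make $\dot c_k$ total you need $\{p:K^p\geq k\}$ dense for every $k$; given $q$ with stage $K^q$, you must produce $c^q_{K^q+1}\colon V^q\to f(K^q+1)$ all of whose colour classes have digirth $>K^q+1$. There is no generic way to do this: $c^q_{K^q}$ need not work, since its classes may contain cycles of length exactly $K^q+1$, and the discrete colouring is unavailable whenever $f(K^q+1)<|V^q|$ (which happens for slowly-growing $f\in\mc F$). Moreover, your own amalgamation move \emph{creates} conditions on which this is nontrivial: after adding an $n$-cycle that is monochromatic in every active $c_k$, all the intermediate colourings $c_k$ for $K^*<k<n$ may be forced to keep that cycle in one class, and proving that one can always continue past $k=n$ while respecting $f$'s pace is exactly the difficulty the paper's case analysis is engineered to resolve. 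In the paper the density move is trivial for a reason: with $n^p=|V^p|$, the topmost colouring always has acyclic classes (digirth $>|V^p|$ on $\leq|V^p|$ vertices is acyclicity), so adding an isolated vertex and reusing the top colouring automatically validates the next stage. You discard this invariant and so cannot invoke that mechanism. Either reinstate $K^p=|V^p|$ (and then you inherit the paper's case analysis in the cycle-adding step), or supply an explicit and nontrivial proof that the stage can always be raised from an arbitrary condition in your poset; as written, that step is missing and the argument does not close.
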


\begin{proof} 
 
Given $f\in \mc F$ we define the poset $\mb P_f$ of conditions $p=(d^p,(g^p_k)_{3\leq k\leq n^p})$ where 
\begin{enumerate}[(P1)]
 \item $d^p=(V^p,E^p)$ is a finite digraph on $\omg$ and $n^p=|V^p|$, 
\item\label{p2}  $g^p_k:V(d^p)\to f(k)$, and
\item\label{p3} $d^p[\{v: g^p_k(v)=i\}]$ has digirth $>k$ for all $i<f(k)$ and $3\leq k\leq n^p$.
\end{enumerate}

We let $p\leq q$ if 
\begin{enumerate}[(i)]
 \item $V^p\supseteq V^q$ and $d^p[V^q]=d^q$,
\item  $g^q_k=g^p_k\uhr V^q$ for all $3\leq k\leq n^q$.
\end{enumerate}

It is clear that a generic filter $\mc G\subseteq \mb P$ introduces a digraph $\dot D=\bigcup\{d^p:p\in \mc G\}$  and functions $\dot g_k$ by $\dot g_k=\bigcup\{g^p_k:p\in \mc G, k\leq n^p\}$ for $k\in \oo$.

\begin{tclaim} The following holds for any generic filter $\mc G\subseteq \mb P$ and $\dot D, \dot g_k$ defined as above:
 \begin{enumerate}[(a)]
  \item $\dot D$ has vertex set $\omg$,
\item $\dom(\dot g_k)=\omg$, and
\item  $\dot D[\{v\in \omg: \dot g_k(v)=i\}]$ has digirth $>k$ for all $i<f(k)$ and $3\leq k<\oo$.
 \end{enumerate}

\end{tclaim}
\begin{proof} (a) Indeed, it suffices to show that the set $\{p\in \mb P:v\in V^p\}$ is dense for every $v\in \omg$. Given any $q\in \mb P$ and $v\in \omg\setm V^q$ we let $V^p=V^q\cup \{v\}$ and $E^p=E^q$. Then simply define $g^p_k=g^q_{k^*}\cup \{(v,0)\}$ where $3\leq k\leq n^p=n^q+1$ and $k^*=\min\{k,n^q\}$. It is easy to check that property (P\ref{p3}) is satisfied by $p$.

(b) follows from (a), and (c) follows from property (P\ref{p3}).

\end{proof} 
We say that two conditions $p,q$ are twins if 
\begin{enumerate}
 \item $n^p=n^q$ and $V(d^p)\cap V(d^q)<V(d^p)\setm V(d^q)<V(d^q)\setm V(d^p)$ (or vica versa $V(d^q)\setm V(d^p)<V(d^p)\setm V(d^q)$), 
\item the unique order preserving map $\psi_{p,q}$ from $V(d^p)$ to $V(d^q)$ is an isomorphism of the digraphs $d^p$ and $d^q$, and
\item $g^p_k(v)=g^q_k(\psi_{p,q}(v))$ for all $3\leq k\leq n^p$ and $v\in V^p$.
\end{enumerate}


\begin{claim}\label{ccc0}
 $\mb P$ is ccc.
\end{claim}
\begin{proof}
By standard $\Delta$-system arguments, it suffices to show that if $p,q\in \mb P$ are twins then they have a common extension $r\in \mb P$. We let $d^r=d^p\cup d^q$ and define $g^r_k= g^p_{k^*}\cup g^q_{k^*}$ where $3\leq k\leq n^r$ and $k^*=\min\{k,n^p\}$. Note that $f(k^*)\leq f(k)$ for $k^*=\min\{k,n^p\}$ so $g^r_k:V^r\to f(k)$ i.e. property (P\ref{p2}) is satisfied.

We need to check that $d^r[\{v\in V^r: g^r_k(v)=i\}]$ has digirth $>k$ for all $i<f(k)$ and $3\leq k\leq n^r$. Note that $$d^r[\{v\in V^r: g^r_k(v)=i\}]=d^p[\{v\in V^p: g^p_{k^*}(v)=i\}]\cup d^q[\{v\in V^q: g^q_{k^*}(v)=i\}]$$ where $k^*=\min\{k,n^p\}$. Furthermore, the graphs $d^p[\{v\in V^p: g^p_{k^*}(v)=i\}]$ and $d^q[\{v\in V^q: g^q_{k^*}(v)=i\}]$ are isomorphic and have digirth $>k$. Hence Lemma \ref{amalglemma} (2) implies that $d^r[\{v\in V^r: g^r_k(v)=i\}]$ still has  digirth $>k$. In turn, $r$ satisfies property (P\ref{p3}) and so $r\in \mb P_f$ is a common extension of $p$ and $q$.

\end{proof}

\begin{tclaim}\label{forceamalg}
$V^\mb P\models \chr {\dot D[\dot W]}=\omg$ for any uncountable $\dot W\subseteq \omg$.
\end{tclaim}
\begin{proof}
Suppose that $p\force \dot W\subseteq \omg$ is uncountable. Find $Y\in [\omg]^\omg,n\in\oo$ and $p_\alpha\leq p$ for $\alpha\in Y$ so that

\begin{enumerate}[(i)]
 \item $\{p_\alpha:\alpha\in Y\}$ are pairwise twins (with mappings $\psi_{\alpha,\alpha'}$ witnessing this) and $n^{p_\alpha}=n\geq 2$,
\item $\alpha\in V^{p_\alpha}$ and $\psi_{\alpha,\alpha'}(\alpha)=\alpha'$ for $\alpha,\alpha'\in Y$, and
\item $p_\alpha\force \alpha\in \dot W$ for all $\alpha\in Y$.
\end{enumerate}

This can be done by the $\Delta$-system lemma. Let $N\in \mb N$ be minimal so that $f(N)>f(n)$; such a value exists as $f(k)\to \infty$ as $k\to \infty$ and $N>n$ as $f$ is nondecreasing.  Now, fix distinct $\alpha_j\in Y$ for $j<N$ and define $q$ as follows:
\begin{equation} 
V^q=\bigcup_{j<N}V^{p_{\alpha_j}} \text{ and } E^q=\bigcup_{j<N}E^{p_{\alpha_j}}\cup \{{\alpha_{N-1}\alpha_0},{\alpha_j\alpha_{j+1}}:j<N-1\}.
\end{equation} 

Now we define $g^q_k$ for $3\leq k\leq n^q$ as follows: let 

 $$g^q_k= \bigcup_{i<N}g^{p_{\alpha_j}}_{k^*}$$ if $3\leq k\leq N-1$ where $k^*=\min\{k,n\}$ and let

$$g^q_k=g^{p_{\alpha_0}}_n\uhr (V^{p_{\alpha_0}}\setm\{\alpha_0\})\cup \{(\alpha_0,f(n))\}\cup\bigcup_{1\leq j<N}g^{p_{\alpha_j}}_n$$ if $N\leq k\leq n^q.$

We need to check that $d^q[\{v\in V^q: g^q_k(v)=i\}]$ has digirth $>k$ for all $i<f(k)$ and $3\leq k\leq n^q$. If $k\leq N-1$ then $$d^q[\{v\in V^q: g^q_k(v)=i\}]=\bigcup_{j<N} d^{p_{\alpha_j}}[\{v\in V^{p_{\alpha_j}}: g^{p_{\alpha_j}}_k(v)=i\}]$$ and we can apply either  Lemma \ref{amalglemma} (2) (if $g^{p_{\alpha_j}}_k(\alpha_j)\neq i$) or Lemma \ref{amalglemma} (3) (if $g^{p_{\alpha_j}}_k(\alpha_j)=i$) to see that $d^q[\{v\in V^q: g^q_k(v)=i\}]$ has digirth $>k$.

Now, for $k$ between $N$ and $n^q$, Lemma \ref{amalglemma} (3) might not apply directly as $k>N$. We distinguish 4 cases depending on value of $i<f(k)$. If $f(n)<i<f(k)$ then $d^q[\{v\in V^q: g^q_k(v)=i\}]$ is empty so we have nothing to prove. If $i=f(n)$ then $d^q[\{v\in V^q: g^q_k(v)=i\}]=\{\alpha_0\}$ so again we have nothing to prove. Now if $i<f(n)$ but $g^{p_{\alpha_j}}_n(\alpha_j)\neq i$ then again we can apply Lemma \ref{amalglemma} (2) to see that $d^q[\{v\in V^q: g^q_k(v)=i\}]$ has digirth $>k$ as $$d^q[\{v\in V^q: g^q_k(v)=i\}]=\bigcup_{j<N} d^{p_{\alpha_j}}[\{v\in V^{p_{\alpha_j}}: g^{p_{\alpha_j}}_n(v)=i\}]$$ as before.

Finally, lets look at the case when $g^{p_{\alpha_j}}_n(\alpha_j)=i$ (if this holds for one $j$ then it holds for all $j<N$ as we are working with twin conditions). Suppose that $C$ is a cycle in $d^q[\{v\in V^q: g^q_k(v)=i\}]$ of length $\leq k$. By Lemma \ref{amalglemma} (2), $C$ must contain a new edge of the form $\alpha_j\alpha_{j+1}$ where $1\leq j<N-1$. In particular, we can find $1\leq j_0<j_1\leq N-1$ so that $C$ contains a directed path from $\alpha_{j_1}$ to $\alpha_{j_0}$ using only edges from $\bigcup_{j<N} d^{p_{\alpha_j}}[\{v\in V^{p_{\alpha_j}}: g^{p_{\alpha_j}}_n(v)=i\}]$. Let $\psi=\bigcup \{\psi_{\alpha_j,\alpha_{j_1}}:j<N\}$ where $\psi_{\alpha,\alpha}$ is the identity on $V^{p_{\alpha}}$. Now $\psi$ maps $P$ into a walk from $\alpha_{j_1}$ back to $\alpha_{j_1}=\psi(\alpha_{j_0})$ in $d^{p_{\alpha_{j_1}}}[\{v\in V^{p_{\alpha_{j_1}}}: g^{p_{\alpha_{j_1}}}_n(v)=i\}]$. Also, this walk has length at most $k$ so it must contain a cycle of length at most $k$ as well. However, this contradicts that $d^{p_{\alpha_{j_1}}}[\{v\in V^{p_{\alpha_{j_1}}}: g^{p_{\alpha_{j_1}}}_n(v)=i\}]$ has digirth $>k$.

Hence, we showed that $(g^q_k)_{3\leq k\leq n^q}$ satisfies property (P\ref{p3}) and so $q\in \mb P_f$. It is now clear that $q\force \dot D[\{\alpha_j:j<N]$ is a copy of $\arr{C}_N$ in $\dot W$.

\end{proof}

At this point, we showed that for any single $f\in \mc F$ there is a ccc extension of the ground model with the required digraph $D_f$.

Now, starting from a model of CH, we can define a finite support iteration $(\mb P_\alpha,\dot {\mb Q}_\beta)_{\alpha\leq \omg,\beta<\omg}$ of length $\omg$ where $V^{\mb P_\alpha}\models \dot {\mb Q}_\alpha=\mb P_{\dot f}$ for some ${\mb P_\alpha}$-name $\dot f$ for a function in $\mc F$. 

It follows from Claim \ref{ccc0} that each ${\mb P_\alpha}$ is ccc so we can arrange the iteration in such a way that any $f\in \mc F$ in the final model shows up at some intermediate stage i.e. $\dot {\mb Q}_\alpha=\mb P_{\dot f}$ for some $\alpha$ and appropriate name $\dot f$ for $f$. So it suffices to check that in the final model $V^{\mb P_\omg}$ we still have $\chr{D_f}=\omg$ for the graphs that we introduced by the intermediate forcings $\mb Q_\alpha$. This can be done using determined conditions and the argument in Claim \ref{forceamalg}; more precisely, we add edges to the finite approximation of ${D_f}$ in coordinate $\alpha$ as in Claim \ref{forceamalg} while not adding any new edges to graphs in other coordinates. We leave the details to the interested reader.

\end{proof}

Let us also mention the following

\begin{obs} Suppose that $D$ is a digraph.
\begin{enumerate}
 \item $D\nrightarrow (\arr{C}_k)^1_\oo$ for all $k<\oo$ implies $\chr{D}\leq 2^\oo$. 
\item The edges of $D$ can always be partitioned into two acyclic sets.
\end{enumerate}

\end{obs}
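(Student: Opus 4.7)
For part (1), the plan is to amalgamate the $\omega$-colourings witnessing $D\nrightarrow (\arr C_k)^1_\omega$ into a single colouring with $2^\oo$ colours. For each $k\geq 3$, fix a colouring $c_k:V(D)\to \oo$ such that $D$ contains no monochromatic copy of $\arr C_k$ under $c_k$. Define $c:V(D)\to \oo^\oo$ by $c(v)=(c_k(v))_{k\geq 3}$. Since $|\oo^\oo|=2^\oo$, this uses at most $2^\oo$ colours. If $S\subseteq V(D)$ is a colour class of $c$ and $D[S]$ contained a directed cycle, then that cycle would have some length $k\geq 3$ and, since $c_k$ is constant on $S$, would be monochromatic under $c_k$, contradicting the choice of $c_k$. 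Hence every colour class is acyclic and $\chr D\leq 2^\oo$.

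For part (2), the idea is simply to split the arcs according to an auxiliary well-ordering. Fix a well-order $<$ of $V(D)$ and set
\[
E_1=\{uv\in E(D):u<v\} \quad \text{and}\quad  E_2=\{uv\in E(D):v<u\}.
\]
Then $E(D)=E_1\sqcup E_2$. A directed cycle in the subdigraph $(V(D),E_1)$ would give a sequence of vertices $v_0,v_1,\dots,v_\ell=v_0$ with $v_i<v_{i+1}$ for every $i$, which is impossible; symmetrically for $(V(D),E_2)$. Thus both edge sets are acyclic.

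The only mildly non-trivial ingredient is recognising that in part (1), the crucial point is that a cycle has a definite length, so it is handled by exactly one of the $c_k$; the rest is bookkeeping. Neither part requires any of the forcing/amalgamation machinery developed earlier in the paper, so I expect no serious obstacle.
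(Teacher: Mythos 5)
Your proof is correct and matches the paper's argument essentially verbatim: for (1) the paper likewise combines the witnessing colourings $f_k$ into the product map $v\mapsto (f_k(v))_{k\in\oo}$, and for (2) it also splits the arcs into forward and backward edges along an auxiliary well-order. No differences worth noting.
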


Indeed, if $f_k$ witnesses $D\nrightarrow (\arr{C}_k)^1_\oo$ then  $f:V(D)\to \oo^\oo$ defined by $v\mapsto (f_k(v))_{k\in \oo}$ witnesses $\chr{D}\leq 2^\oo$. To see (2), take an arbitrary well order on the vertices and consider the forward and backward edges.

Finally, we state without proof that the famous Erd\H os-de Bruijn compactness result also holds for directed graphs:

\begin{theorem}\label{thm:compactness}
 Suppose that any finite subgraph of the digraph $D$ has finite dichromatic number at most $k$. Then $\chr D\leq k$ as well.
\end{theorem}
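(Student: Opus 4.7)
The plan is to imitate the classical Erd\H os--de Bruijn compactness argument, the key observation being that being an \emph{acyclic} vertex set has finite character: $A\subseteq V(D)$ is acyclic in $D$ if and only if $D[F]$ contains no directed cycle for every finite $F\subseteq A$, simply because every directed cycle is finite. This is the exact structural property that lets the Tychonoff-based proof go through in the directed setting.

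First, I would put the discrete topology on $k=\{0,1,\dots,k-1\}$ and consider the compact product space $X=k^{V(D)}$ of all functions $f:V(D)\to k$. For each finite $F\subseteq V(D)$, define
\[
 C_F=\{f\in X: f\uhr F \text{ witnesses } \chr{D[F]}\leq k\},
\]
meaning that each fibre $(f\uhr F)^{-1}(i)$ is acyclic in $D[F]$. Since membership in $C_F$ depends only on the finitely many coordinates in $F$, each $C_F$ is clopen in $X$. The hypothesis gives $C_F\neq\emptyset$ for every finite $F$: take any valid $k$-colouring of $D[F]$ and extend it arbitrarily to $V(D)$.

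Next I would verify the finite intersection property. Given finite $F_1,\dots,F_n\subseteq V(D)$, their union $F=\bigcup_{i\leq n}F_i$ is again finite, so by assumption $\chr{D[F]}\leq k$; any colouring of $F$ realising this (extended arbitrarily outside $F$) lies in $\bigcap_{i\leq n}C_{F_i}$. By compactness of $X$, there exists $f\in\bigcap\{C_F:F\in[V(D)]^{<\omega}\}$. For such an $f$ and each colour $i<k$, the set $A_i=f^{-1}(i)$ has the property that $D[A_i\cap F]$ is acyclic for every finite $F$; by the finite-character observation above, $A_i$ itself is acyclic in $D$. Hence $\{A_i:i<k\}$ is a partition of $V(D)$ into $k$ acyclic sets, so $\chr D\leq k$.

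There is essentially no obstacle here: the only thing one must notice is the finite-character of acyclicity, after which the proof is the standard product-compactness argument (equivalently, one could invoke G\"odel's compactness theorem with a propositional variable $X_{v,i}$ for ``$v$ has colour $i$'', axioms forcing a total colouring, and, for every finite directed cycle $C$ and colour $i$, the clause $\bigvee_{v\in C}\neg X_{v,i}$; every finite subtheory is satisfiable by hypothesis).
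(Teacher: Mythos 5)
Your proof is correct. The paper explicitly states this theorem ``without proof'' (attributing it to the Erd\H os--de Bruijn compactness method), so there is no proof in the text to compare against; your argument is exactly the intended standard one. The only point that requires care in the directed setting is the one you isolate: acyclicity has finite character because every directed cycle is finite, and your verification of this, together with the clopenness of $C_F$, the finite intersection property via $C_{\bigcup_i F_i}\subseteq\bigcap_i C_{F_i}$, and Tychonoff compactness of $k^{V(D)}$, is complete. The propositional-logic reformulation you sketch at the end is an equally valid alternative.
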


In particular, we can deduce the result of Bokal et al \cite{bokal} on finite digirth and dichromatic number from our forcing result in Theorem \ref{digirth}: given a model $V$ of set theory and finite number $k$, we force to find an extension $V^{\mb P}$ with a graph $D$ with digirth $>k$ and $\chr D>\oo$. By the above compactness result, there must be a finite subgraph $D^*$ of $D$ (in $V^\mb P$) which has dichromatic number $\geq k$. However, the models $V$ and $V^{\mb P}$ have the same finite digraphs and hence $D^*\in V$ as well. Much like the probabilistic proof in \cite{bokal} this forcing argument gives no information about these sparse digraphs with large dichromatic number. A simple, recursive construction of such graphs was actually given by M. Severino \cite{severino}.

\section{Orientations of undirected graphs with large chromatic number}\label{posrelsec}

There are two trivial orientations of any undirected graph $G$ given a well order $\prec$ on the vertices: define the orientation $\arr G$ of $G$ by  $uv\in E(\arr G)$ if and only if $uv\in E(G)$ and $u\prec v$. Similarly, $\larr G$ is defined by $uv\in E(\arr G)$ if and only if $uv\in E(G)$ and $v\prec u$. 

It is well known that if $G$ has countable \textit{colouring number} i.e. $\{u\in N(v):u\prec v\}$ is finite for every vertex $v\in V(G)$ (for some well order $\prec$ of the vertices) then $\chr G\leq \oo$ (see \cite{EH0}). This yields the following observations: let $\arr S$ denote the countable star with all edges pointing out, and  $\larr S$ denote the countable star with all edges pointing in. Then the orientation $\arr G$ of $G$ witnesses $\ENL{G}{ \larr{S}}{\omega}$ while $\larr G$  witnesses $\ENL{G}{ \arr{S}}{\omega}$.

As we saw in the previous section, $H_{\omega,\omega}$ embeds into any graph $G$ with $\chr G>\omega$ \cite{half}.
In particular, the girth of $G$ is at most 4 whenever $\chr G>\omega$; so it could be the case that $$\ENL{G}{\arr{C_4}}{\omega} \text{ or even } \ENLL{G}{\arr{C_4}}$$ whenever $\chr G>\omega$. Indeed, we are going to prove this, at least for some graphs.

First, let us look at complete graphs. 
Recall that $$\kappa \nrightarrow [\kappa;\kappa]^2_2$$ means that there is a function $f:[\kappa]^2\to 2$ so that for all $A,B\in [\kappa]^\kappa$ and $i<2$ there is $\alpha \in A$ and $\beta\in B$ so that $\alpha<\beta$ and  $f(\alpha,\beta)=i$.

\begin{theorem}\label{Komgprop} Suppose that $\kappa$ is an infinite cardinal.
\begin{enumerate}
                                 \item If $\kappa$ is regular and $\kappa \nrightarrow [\kappa;\kappa]^2_2$ then $\displaystyle{\ENLL{K_\kappa}{\bigwedge_{3\leq n\in \oo}\arr{C_n}}}$.
\item If $\lambda$ is uncountable then $\displaystyle{\ENL{K_{\lambda^+}}{\bigwedge_{3\leq n\in \oo}\arr{C}_n}{\lambda}}$.

                              \end{enumerate}

\end{theorem}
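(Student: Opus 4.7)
The plan is to let the 2-colouring $f$ witnessing $\kappa\nrightarrow[\kappa;\kappa]^2_2$ directly induce the orientation: for $\alpha<\beta$ in $\kappa$, orient $\alpha\to\beta$ iff $f(\alpha,\beta)=0$, and $\beta\to\alpha$ otherwise. Call this orientation $D$. Since $\chr{K_\kappa[W]}=|W|$, the only relevant subsets $W$ are those of size $\kappa$, and realising $\arr{C_n}\hookrightarrow D[W]$ amounts to finding $\alpha_0<\alpha_1<\dots<\alpha_{n-1}$ in $W$ with $f(\alpha_i,\alpha_{i+1})=0$ for $i<n-1$ and $f(\alpha_0,\alpha_{n-1})=1$.

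The workhorse lemma states that for every $W\in[\kappa]^\kappa$ and $i\in\{0,1\}$ the set $S^W_i:=\{\alpha\in W:|\{\beta\in W:\alpha<\beta,f(\alpha,\beta)=i\}|=\kappa\}$ satisfies $|W\setminus S^W_i|<\kappa$. To see this, suppose $A=W\setminus S^W_i$ has size $\kappa$ and put $\gamma_\alpha:=\sup\{\beta\in W:\alpha<\beta,f(\alpha,\beta)=i\}<\kappa$ for $\alpha\in A$. If some $\gamma^*<\kappa$ satisfies $|\{\alpha\in A:\gamma_\alpha<\gamma^*\}|=\kappa$, then no pair $\alpha<\beta$ with $\alpha$ in this set and $\beta\in W\cap(\gamma^*,\kappa)$ can have $f(\alpha,\beta)=i$, contradicting $\kappa\nrightarrow[\kappa;\kappa]^2_2$. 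Otherwise $\{\gamma_\alpha:\alpha\in A\}$ is cofinal in $\kappa$, so we can recursively pick $\alpha_\xi\in A$ with $\alpha_\xi>\sup_{\zeta<\xi}\gamma_{\alpha_\zeta}$ for $\xi<\kappa$; every pair $\alpha_\zeta<\alpha_\xi$ in $\{\alpha_\xi:\xi<\kappa\}$ then has $f$-value $1-i$, and applying the partition relation to this set against itself gives the desired contradiction. In particular $|S^W_0\cap S^W_1|=\kappa$ by regularity of $\kappa$.

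Now the cycle is built iteratively. Pick $\alpha_0\in S^W_0\cap S^W_1$ and let $X_0$ and $Y_0$ denote the $\kappa$-sized sets of $\beta\in W$ with $\alpha_0<\beta$ and $f(\alpha_0,\beta)=0$, respectively $f(\alpha_0,\beta)=1$. For $i=1,\dots,n-3$ (the range is vacuous when $n=3$) recursively pick $\alpha_i\in X_{i-1}\cap S^W_0$, which is of size $\kappa$ since $|W\setminus S^W_0|<\kappa$, and set $X_i:=\{\beta\in W:\alpha_i<\beta,f(\alpha_i,\beta)=0\}$, which has size $\kappa$ because $\alpha_i\in S^W_0$. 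Finally, apply $\kappa\nrightarrow[\kappa;\kappa]^2_2$ to the two $\kappa$-sized sets $X_{n-3}$ and $Y_0$ with colour $0$ to produce $\alpha_{n-2}\in X_{n-3}$ and $\alpha_{n-1}\in Y_0$ with $\alpha_{n-2}<\alpha_{n-1}$ and $f(\alpha_{n-2},\alpha_{n-1})=0$; since $\alpha_{n-1}\in Y_0$ we automatically have $f(\alpha_0,\alpha_{n-1})=1$, and all other required $f$-values hold by construction, so the cycle $\alpha_0\to\alpha_1\to\dots\to\alpha_{n-1}\to\alpha_0$ sits inside $D[W]$.

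Part (2) reduces to (1) together with the standard ZFC result that $\lambda^+\nrightarrow[\lambda^+;\lambda^+]^2_2$ holds for every uncountable $\lambda$: one orients $K_{\lambda^+}$ as in (1), and any $\lambda$-colouring of $V(K_{\lambda^+})$ must by pigeonhole produce a colour class $W$ of size $\lambda^+$, inside which (1) embeds every $\arr{C_n}$ simultaneously. The main obstacle throughout is coordinating the ``forward'' $0$-edges along the path $\alpha_0\to\dots\to\alpha_{n-2}$ with the single ``wrap-around'' $1$-edge closing the cycle at $\alpha_{n-1}$. The trick is to leave the original pool $Y_0$ of valid last vertices untouched during the recursion, so that it still has size $\kappa$ at the end, allowing a final application of the partition relation against $X_{n-3}$ to secure $f(\alpha_{n-2},\alpha_{n-1})=0$ and $f(\alpha_0,\alpha_{n-1})=1$ at once.
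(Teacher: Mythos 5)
Your proof of part (1) is correct and takes a route that differs from the paper's in how the directed cycle is found. The paper proves the triangle case first via a degree-splitting observation, then for general $n$ produces $\kappa$-many pairwise disjoint directed paths of length $n-2$ inside $W$ and argues by contradiction that one of them can be closed into a cycle. Your workhorse lemma (both $S^W_0$ and $S^W_1$ are co-small in $W$) lets you instead grow a single directed path greedily within $S^W_0$ while holding $Y_0$ in reserve, with one final appeal to $\kappa\nrightarrow[\kappa;\kappa]^2_2$ to close the cycle; this is a cleaner, all-at-once construction. (One cosmetic fix: in the cofinality case of your lemma, pick $\alpha_\xi$ above $\sup_{\zeta<\xi}\max(\alpha_\zeta,\gamma_{\alpha_\zeta})$, since $\gamma_\alpha$ could a priori lie below $\alpha$ if the defining set is empty.)

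Part (2), however, has a genuine gap. You assert that $\lambda^+\nrightarrow[\lambda^+;\lambda^+]^2_2$ is a ``standard ZFC result'' for every uncountable $\lambda$, but this rectangular square-bracket relation is only known in ZFC when $\lambda$ is \emph{regular} (Rinot--Todorcevic, \emph{Rectangular square-bracket operation for successor of regular cardinals}). For singular $\lambda$ the relation is not a ZFC theorem, and the statement of (2) covers \emph{all} uncountable $\lambda$. This is precisely why the paper gives a separate argument for part (2): it uses a Shelah club-guessing sequence on $E^{\lambda^+}_\omega$, which exists in ZFC for every uncountable $\lambda$, singular or not, at the cost of yielding only the weaker $\lambda$-colouring conclusion rather than the stronger embedding-into-every-$\lambda^+$-sized-set relation that your reduction would give. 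Your reduction is sound when $\lambda$ is regular, but the singular case requires the paper's club-guessing argument (or something else entirely).
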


Let us show a corollary first:

\begin{cor}
 $\dchr{K_\kappa}=\kappa$ for any infinite cardinal $\kappa$.
\end{cor}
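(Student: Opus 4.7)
The upper bound $\dchr{K_\kappa}\le\kappa$ is immediate from the general observation that for every orientation $D$ of a graph $G$ one has $\chr{D}\le\chr{G}$, since any partition of $V(G)$ into $G$-independent sets is automatically a partition into acyclic vertex sets for $D$. So my plan is to focus on the lower bound and show, for every infinite $\mu<\kappa$, that some orientation of $K_\kappa$ has dichromatic number $>\mu$; the proof will split by cases on $\kappa$.

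The main tool will be Theorem \ref{Komgprop}(2): for uncountable $\lambda$ it yields an orientation of $K_{\lambda^+}$ in which every $\lambda$-colouring produces monochromatic directed cycles of all lengths $\ge 3$, so the dichromatic number exceeds $\lambda$. When $\kappa=\lambda^+$ with $\lambda$ uncountable, this already gives $\dchr{K_\kappa}\ge\kappa$. When $\kappa$ is an uncountable limit cardinal, for each uncountable $\lambda<\kappa$ I would apply the theorem to $K_{\lambda^+}\subseteq K_\kappa$ to obtain an orientation $D_0$ of $K_{\lambda^+}$ with $\chr{D_0}>\lambda$, and then extend $D_0$ to an orientation $D$ of the whole of $K_\kappa$ by orienting the remaining edges arbitrarily. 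Since $D$ restricted to $V(K_{\lambda^+})$ is exactly $D_0$, we have $\chr{D}\ge\chr{D_0}>\lambda$; letting $\lambda$ range over uncountable cardinals below $\kappa$ gives $\dchr{K_\kappa}\ge\kappa$.

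The hard part will be the two borderline cases $\kappa=\omega$ and $\kappa=\aleph_1$, which Theorem \ref{Komgprop}(2) does not reach because it requires $\lambda$ uncountable. For $\kappa=\aleph_1$ my plan is to invoke Theorem \ref{Komgprop}(1), which needs the polarised partition relation $\aleph_1\nrightarrow[\aleph_1;\aleph_1]^2_2$; this holds in ZFC (it is classical, and in any case follows from Todorcevic's stronger theorem $\aleph_1\nrightarrow[\aleph_1]^2_{\aleph_1}$), yielding $\dchr{K_{\aleph_1}}\ge\aleph_1$. For $\kappa=\omega$ I would quote the classical Erd\H os--Moser style result that finite tournaments can have arbitrarily large dichromatic number, and realise each such tournament as an induced subdigraph of a suitable orientation of $K_\omega$, giving $\dchr{K_\omega}\ge n$ for every $n<\omega$. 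Together with the upper bound, all cases combine to $\dchr{K_\kappa}=\kappa$.
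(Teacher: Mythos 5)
Your argument is correct, but it routes the lower bound differently from the paper and contains one attribution error worth flagging.

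The paper handles \emph{every} successor cardinal $\kappa^+$ at once via Theorem~\ref{Komgprop}(1), using the Rinot--Todorcevic fact that $\kappa^+ \nrightarrow [\kappa^+;\kappa^+]^2_2$ holds whenever $\kappa$ is regular; there is no case split between $\aleph_1$ and larger successors. For limit $\kappa$ the paper then builds a \emph{single} orientation of $K_\kappa$ by pasting together the orientations of pairwise disjoint copies of $K_{\kappa_i^+}$ (along a cofinal sequence $(\kappa_i)$ of regular cardinals) and orienting cross-edges arbitrarily; this one orientation already has dichromatic number $\kappa$, which is slightly more informative than the supremum statement. You, instead, invoke Theorem~\ref{Komgprop}(2) for successors of uncountables, use a separate orientation for each $\lambda<\kappa$ in the limit case, and treat $\aleph_1$ and $\omega$ as special cases. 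All of this is fine; in fact your explicit treatment of $\kappa=\omega$ via Erd\H os--Moser--type tournaments of large dichromatic number covers a case that the paper's proof glosses over (its limit-cardinal argument presupposes infinitely many regular cardinals below $\kappa$, which fails for $\kappa=\omega$). The trade-off is that the paper gets a single witnessing orientation per $\kappa$, while your approach is more modular.

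One genuine error: you describe $\aleph_1\nrightarrow[\aleph_1;\aleph_1]^2_2$ as ``classical'' and as following from Todorcevic's $\aleph_1\nrightarrow[\aleph_1]^2_{\aleph_1}$. Neither is accurate. The square-bracket relation on \emph{rectangles} is strictly harder than the one on squares and does not follow from it by any abstract reduction; for $\omega_1$ it was proved by Moore in his 2006 solution to the L-space problem (\cite{Lspace}), and the paper itself emphasises this by posing the problem of finding a more elementary orientation of $K_{\omega_1}$ with uncountable dichromatic number. The successor-of-regular case in general is the Rinot--Todorcevic theorem (\cite{rinotrect}). So your conclusion for $\aleph_1$ is correct, but the provenance should be corrected.
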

\begin{proof} Recall that $\kappa^+ \nrightarrow [\kappa^+;\kappa^+]^2_2$ holds whenever $\kappa$ is a regular cardinal \cite{rinotrect}. Hence Theorem \ref{Komgprop} (1) and (2) implies that  $\dchr{K_{\kappa^+}}=\kappa^+$ for any cardinal $\kappa$. 

Now, given a limit cardinal $\kappa$ let $(\kappa_i)_{i<\cf(\kappa)}$ be a cofinal sequence of regular cardinals in $\kappa$. Let $V_i\in [\kappa]^{\kappa_i^+}$ pairwise disjoint for $i<\cf(\kappa)$. Then $K_\kappa$ restricted to $V_i$ is just a copy of $K_{\kappa_i^+}$ so we can apply Theorem \ref{Komgprop} (1) to find an orientation $D_i$ witnessing $\ENLL{K_{\kappa_i^+}}{\bigwedge_{3\leq n\in \oo}\arr{C_n}}$  on $V_i$. Putting together these digraphs $D_i$ (and orienting the edges outside arbitrarily) we defined an orientation $D$ of $K_\kappa$ that witnesses $\ENL{K_{\kappa}}{\bigwedge_{3\leq n\in \oo}\arr{C_n}}{\mu}$ for any $\mu<\kappa$. In particular, $\dchr{K_\kappa}=\kappa$ .

\end{proof} 

Our proof of Theorem \ref{Komgprop} was motivated by the proof of Theorem 8 \cite{simchrom}; we will point out further connections to \cite{simchrom} later as well, in particular in Section \ref{negrelsec}.

\begin{proof} (1) Let $f:[\kappa]^2\to 2$ witness  $\kappa \nrightarrow [\kappa;\kappa]^2_2$. Now simply define $D=(\kappa,E)$ by $\alpha\beta\in E$ if $\alpha<\beta$ and $f(\alpha,\beta)=0$, otherwise $\beta\alpha\in E$.

 First, we show that any induced subgraph of size $\kappa$ contains a copy of $\arr{C}_3$. Let $W\in[\kappa]^\kappa$. Define $W^+=\{v\in W:|N^+(v)\cap W|<\kappa\}$ and $W^-=\{v\in W:|N^-(v)\cap W|<\kappa\}$. If there is a $v\in W\setm(W^+\cup W^-)$ then by the choice of $f$ we can find $\alpha\in N^+(v)\cap W$ and $\beta \in N^-(v)\cap W$ so that $v<\alpha<\beta$ and $f(\alpha,\beta)=0$. Then $v\alpha,\alpha\beta,\beta v\in E$ so $\{v,\alpha,\beta\}$ is a copy of $\arr{C}_3$.

 Now, it suffices to show that $|W^+|=\kappa$ or  $|W^-|=\kappa$ is not possible. If $|W^+|=\kappa$ then using the regularity of $\kappa$, one can find a $Y\in [W^+]^\kappa$ so that $\alpha<\beta\in Y$ implies that $\beta\notin N^+(\alpha)$. However, $f(\alpha,\beta)=0$ for some $\alpha<\beta\in Y$ by the choice of $f$ so $\beta\in N^+(\alpha)$; this is a contradiction. The proof that $|W^-|=\kappa$ is not possible is completely analogous.

Now, fix $n\in \oo$ at least 3 and $W\in[\kappa]^\kappa$; we will find a copy of $\arr{C}_n$ in $D[W]$. Find pairwise disjoint paths $P_\xi=(\alpha^\xi_0\dots \alpha^\xi_{n-2})$ in $W$ of length $n-2$ for $\xi<\kappa$. This can be done by applying Corollary \ref{treeembed}; indeed,  we already proved that $\chr{D[W\setm\delta]}>\oo$ for any $\delta<\kappa$ so $\arr{P}_\oo\hookrightarrow D[W\setm \delta]$.

 Note that if there is a single $\xi$ so that  $N^-(\alpha^\xi_0)\cap W$ and $N^+(\alpha^\xi_{n-2})\cap W$ both have size $\kappa$ then we can extend $P_\xi$ into a copy of $\arr{C}_n$ in $W$. So suppose that this is not the case; then there is $I\in [\kappa]^\kappa$ so that either 
\begin{enumerate}[(i)]
 \item $|N^-(\alpha^\xi_0)\cap W|<\kappa$ for all $\xi \in I$, or
\item $|N^+(\alpha^\xi_{n-2})\cap W|<\kappa$ for all $\xi \in I$.
\end{enumerate}
If case (i) holds then, using that $\kappa$ is regular, we can find $J\in [I]^\kappa$ so that $\xi<\zeta\in J$ implies that $\alpha^\xi_0<\alpha^\zeta_0$ and $\alpha^\zeta_0\notin N^-(\alpha^\xi_0)$. However, this clearly contradicts the choice of $f$ as there is some $\xi<\zeta\in J$ such that $f(\alpha^\xi_0,\alpha^\zeta_0)=1$.

Similarly, if case (ii) holds then we can find $J\in [I]^\kappa$ so that $\xi<\zeta\in J$ implies that $\alpha^\xi_0<\alpha^\zeta_0$ and $\alpha^\zeta_0\notin N^+(\alpha^\xi_0)$. This again contradicts the choice of $f$.

(2) Suppose that $\lambda$ is uncountable. We fix a \textit{club guessing sequence} $\{C_\alpha:\alpha\in E^{\lambda^+}_\oo\}$, that is: $C_\alpha$ is a cofinal sequence of type $\oo$ in $\alpha$ and whenever $E\subseteq \lambda^+$ is a club in $\lambda^+$ (i.e. a closed and unbounded subset) then $C_\alpha\subseteq E$ for stationary many $\alpha\in E^{\lambda^+}_\oo$. The existence of such guessing sequences was originally proved in Claim 2.3 \cite{shelahcard} (for a detailed proof see \cite{abrahamcard}). We let $I(\alpha,0)=C_\alpha(0)$ and $I(\alpha,n)=C_\alpha(n)\setm C_\alpha(n-1)$ for $1\leq n<\oo$ where $(C_\alpha(n))_{n\in \oo}$  is the increasing enumeration of $C_\alpha$. Now, define the orientation $D$ as follows: given $\alpha<\beta\in \lambda^+$ we let $\alpha\beta\in E(D)$ if and only if $n(\alpha,\beta)$ is even where $n(\alpha,\beta)=\min\{n\in\oo:\alpha\in I(\beta,n)\}$; otherwise $\beta\alpha\in E(D)$.

We will show that given a partition $\lambda^+=\bigcup\{A_i:i<\lambda\}$ there is an $i<\lambda$ so that $D[A_i]$ contains a directed $n$-cycle for all $3\leq n\in \oo$.  Take a continuous, increasing sequence of elementary submodels $(M_\xi)_{\xi<\lambda^+}$ covering $\lambda^+$ so that $\{A_i,D:i<\lambda\}\subseteq M_\xi$ and $|M_\xi|=\lambda$ for all $\xi<\lambda^+$. Let $E=\{M_\xi\cap \lambda^+:\xi<\lambda^+\}$. $E$ is a club so there is an $i<\lambda$ and some stationary $S\subseteq A_i$ so that $C_\beta\subseteq E$ for all $\beta\in S$. Observe that $I(\beta,n)\cap A_i\neq\emptyset$ for every $\beta\in S$ and $n\in \oo$.

\begin{tclaim}\label{pathclaim} For every $n\in\oo$ at least 3 and every $\delta<\lambda^+$ there is a path $P=(\alpha_0\dots \alpha_{n-2})$ in $D[A_i\setm\delta]$ so that $|N^+(\alpha_{n-2})\cap A_i|=\lambda^+$.
\end{tclaim}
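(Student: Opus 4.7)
The plan is to construct the path in reverse, starting from its endpoint $\alpha_{n-2}$ and prepending one vertex at a time, while ensuring every vertex of the path lies in $S$ so that the club-guessing structure always supplies the next in-neighbor. To set this up I first need a strengthening of the paper's remark that $I(\beta,n)\cap A_i\neq\emptyset$, namely the \emph{key lemma}: for every $\beta\in S$ and every $k\in\oo$, the set $I(\beta,k)\cap S$ is non-empty. I would prove this by refining the elementary chain $(M_\xi)_{\xi<\lambda^+}$ so that $S\in M_\xi$ and $M_\eta\in M_\xi$ whenever $\eta<\xi$; then writing $C_\beta(k)=M_{\xi_k}\cap \lambda^+$ as per $C_\beta\subseteq E$, the ordinal $C_\beta(k-1)=M_{\xi_{k-1}}\cap \lambda^+$ lies in $M_{\xi_k}$, so applying elementarity of $M_{\xi_k}$ to the statement ``$S$ is unbounded in $\lambda^+$'' yields $\gamma\in S\cap M_{\xi_k}$ with $\gamma>C_\beta(k-1)$, hence $\gamma\in I(\beta,k)\cap S$.

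Next I would use the key lemma to find an endpoint $\alpha_{n-2}\in T\cap S$ above $\delta$, where $T=\{\alpha\in A_i:|N^+(\alpha)\cap A_i|=\lambda^+\}$. For each $\beta\in S\cap(\delta+1,\lambda^+)$, pick an even $k(\beta)$ large enough that $I(\beta,k(\beta))\subseteq(\delta,\beta)$, and let $\alpha(\beta)=\min(I(\beta,k(\beta))\cap S)$, which exists by the key lemma. The function $\alpha(\cdot)$ is regressive on the stationary set $S\cap(\delta+1,\lambda^+)$, so Fodor's theorem makes it constant, say $\equiv\alpha^*\in S$, on a stationary $S^*\subseteq S$. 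Since $n(\alpha^*,\beta)=k(\beta)$ is even for $\beta\in S^*$, we have $S^*\subseteq N^+(\alpha^*)$, so $\alpha^*\in T\cap S\cap(\delta,\lambda^+)$.

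Finally, set $\alpha_{n-2}=\alpha^*$ and recursively prepend: for $j=n-3,n-4,\ldots,0$, use the key lemma to pick an even $k_j$ large enough that $I(\alpha_{j+1},k_j)\subseteq(\delta,\alpha_{j+1})$ and select $\alpha_j\in I(\alpha_{j+1},k_j)\cap S$. Each such $\alpha_j$ lies in $S\subseteq A_i$ with $\alpha_j>\delta$, and $n(\alpha_j,\alpha_{j+1})=k_j$ even gives $\alpha_j\to\alpha_{j+1}$ in $D$; thus $(\alpha_0,\ldots,\alpha_{n-2})$ is a directed path in $D[A_i\setm\delta]$ with endpoint in $T$. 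The main obstacle will be the refinement of the elementary chain needed for the key lemma: $S$ must be installed as a parameter of each $M_\xi$ and the chain must be thickened so that $M_\eta\in M_\xi$ for $\eta<\xi$, all while preserving (possibly on a cofinal subchain) that $C_\beta\subseteq E$ continues to hold on a stationary subset of the original $S$.
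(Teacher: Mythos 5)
Your plan is genuinely different from the paper's: the paper builds the path forward by induction on $n$, working inside the elementary models $M_\xi$, and obtains $|N^+(\alpha_{n-2})\cap A_i|=\lambda^+$ for free because the whole path is placed inside $I(\beta,2k)\subseteq M_\xi$ while $\beta\in N^+(\alpha_{n-2})\cap A_i\setminus M_\xi$, so elementarity forces the out-neighbourhood to be large. You instead construct the path backward, use Fodor's theorem to locate an endpoint in $T=\{\alpha\in A_i:|N^+(\alpha)\cap A_i|=\lambda^+\}$, and then prepend one vertex at a time. Both the Fodor step and the prepending step are routine \emph{given} your key lemma, but the key lemma is where the argument breaks.

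The key lemma $I(\beta,k)\cap S\neq\emptyset$ is not a mild strengthening of the paper's observation $I(\beta,k)\cap A_i\neq\emptyset$; it is a much stronger statement, and the circularity you flag is fatal as sketched, not a technicality to be fixed by thickening the chain. The set $S$ is defined only \emph{after} the chain $(M_\xi)_{\xi<\lambda^+}$ is fixed: $E=\{M_\xi\cap\lambda^+\}$ is the club and then $S$ is a stationary set on which the ladders $C_\beta$ guess $E$ and which lies in one colour $A_i$. Requiring $S\in M_\xi$ would require choosing $S$ before the chain, but $S$ depends on the chain. Replacing $(M_\xi)$ by a thicker chain $(N_\zeta)$ gives a new club $E_N$ and hence a new guessing set $S_N$ defined from $E_N$, which again cannot be an element of any $N_\zeta$, and which may moreover land in a different colour class $A_{i'}$. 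Nor is the statement true for a generic $S$: nothing prevents $S$ from missing an entire interval $[C_\beta(k-1),C_\beta(k))$, since nothing forces any ordinal strictly between two consecutive points of $E$ to simultaneously lie in $A_i$, have cofinality $\omega$, and have its own ladder contained in $E$. The paper avoids the issue entirely: the interior vertices only need to lie in $A_i$ (which follows from $A_i\in M_\xi$ and $A_i$ unbounded), while only the topmost ordinal $\beta$ needs to be in $S$, and that is why the forward induction closes. To salvage a backward construction you would have to weaken the requirement $\alpha_j\in S$ to something the known structure supplies, or set up an iterated club-guessing argument (for instance through the accumulation points of $S$); neither appears in the sketch, and neither is routine.
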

\begin{proof} We prove by induction on $n\geq 3$. If $n=3$ then let $\beta\in S\setm \delta$ and pick $\alpha_0\in I(\beta,2k)$ where $k$ is large enough so that $\delta<C_\beta(2k-1)$. We need that $|N^+(\alpha_{0})\cap A_i|=\lambda^+$; if $|N^+(\alpha_{0})\cap A_i|\leq \lambda$ and $C_\beta(2k)=M_\xi\cap \lambda^+$ then $N^+(\alpha_{0})\cap A_i\subseteq M_\xi$ by elementarity as well. However, $\beta\in N^+(\alpha_{0})\cap A_i\setm M_\xi$.

Now suppose that $n>3$, and again let $\beta\in S\setm \delta$. Using the inductive hypothesis and the fact that $C_\beta(2k)=M_\xi\cap \lambda^+$ for some $\xi<\lambda^+$ find a path $P=(\alpha_0\dots \alpha_{n-2})$ in $ I(\beta,2k)$ so that $|N^+(\alpha_{n-2})\cap A_i|=\lambda^+$ where $k$ is large enough so that $\delta<C_\beta(2k-1)$. By elementarity, we can find $\alpha_{n-1}\in I(\beta,2k)\setm\{\alpha_i:i<n-1\}$ so that $\alpha_{n-1}\in N^+(\alpha_{n-2})$. As before, it is easy to show that $|N^+(\alpha_{n-1})\cap A_i|=\lambda^+$ and so $(\alpha_0\dots \alpha_{n-1})$ is the desired path.
\end{proof}

Now, fix $3\leq n\in \oo$. Let $\beta\in S$ arbitrary and find a path $P =(\alpha_0\dots \alpha_{n-2})$ in $A_i\cap I(\beta,1)$ so that $|N^+(\alpha_{n-2})\cap A_i|=\lambda^+$. This can be done by applying Claim \ref{pathclaim} with $\delta=C_\beta(0)$ inside the appropriate elementary submodel $M_\xi$ where $C_\beta(1)=M_\xi\cap \lambda^+$.

Note that $C_\beta(2)=M_{\xi'}\cap \lambda^+$ for some $\xi'<\lambda^+$ so we can find $\gamma\in N^+(\alpha_{n-2})\cap A_i\cap I(\beta,2)$. Now, $\beta\alpha_0\dots \alpha_{n-2}\gamma$ is a copy of $\arr{C}_n$ in $A_i$.

\end{proof}

{\DD Recall that given a poset $\mb P$ we define its comparability graph $G_{\mb P}$  on vertex set $\mb P$ and let $st\in E(G_{\mb P})$ if and only if $s<_{\mb P}t$ or  $t<_{\mb P}s$. A Suslin-tree  is a poset $\mb S$ so that each $p\in \mb S$ has a well ordered set of predecessors and each chain and antichain of  $\mb S$ is countable. Suslin-trees exist in some models of ZFC (e.g. if $\diamondsuit$ holds) and do not exist in others (e.g. if Martin's axiom holds without CH).}

We can use the argument from Theorem \ref{Komgprop} and a trick due to J. Steprans to get the following:

\begin{prop} Suppose that $\mb S$ is a Suslin-tree and $G_{\mb S}$ is its comparability graph. Then $\displaystyle{\ENLL{G_{\mb S}}{\bigwedge_{3\leq n\in \oo}\arr{C_n}}}$.
\end{prop}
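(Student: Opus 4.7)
First observe that independent sets in $G_{\mb S}$ are precisely antichains of $\mb S$, hence countable, so $\chr{G_{\mb S}[W]} = \chr{G_{\mb S}} = \omg$ iff $W \subseteq \mb S$ is uncountable. Stratifying such a $W$ by the length of the longest $\mb S$-chain inside $W$ ending at each vertex shows that $W$ contains uncountably many pairwise disjoint chains of every finite length $n$; in particular copies of the desired $\arr{C}_n$ as subsets of $G_{\mb S}$ are plentiful, and the only question is whether the orientation can force one such chain to close cyclically.

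I would follow the blueprint of Theorem \ref{Komgprop}(1). Enumerate $\mb S = (s_\alpha)_{\alpha<\omg}$ and fix $f \colon [\omg]^2 \to 2$ witnessing the ZFC relation $\omg \nrightarrow [\omg;\omg]^2_2$. Orient each comparable pair $s_\alpha, s_\beta$ with $\alpha<\beta$ as $s_\alpha s_\beta$ if $f(\alpha,\beta)=0$ and as $s_\beta s_\alpha$ otherwise. Fix an uncountable $W \subseteq \mb S$ and $n \geq 3$. Define $W^+ := \{v \in W : |N^+(v)\cap W| \leq \aleph_0\}$ and $W^- := \{v \in W : |N^-(v)\cap W| \leq \aleph_0\}$; a vertex $v \in W \setminus (W^+ \cup W^-)$ closes $\arr{C}_3$ by applying the choice of $f$ to the uncountable index-sets of $N^+(v) \cap W$ and $N^-(v) \cap W$. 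For $n > 3$, Corollary \ref{treeembed} provides a directed path of length $n-2$ inside $D[W]$ (which has uncountable chromatic number by the first paragraph), and the same argument closes it into $\arr{C}_n$ at the endpoints.

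The main obstacle --- and, I believe, the point of Steprans' trick --- is showing $|W^+|,|W^-| \leq \aleph_0$. In Theorem \ref{Komgprop}(1) the analogous step is immediate because the complete graph has every pair adjacent; in our setting the choice of $f$ must be deployed on pairs of enumeration indices of \emph{comparable} vertices. The Suslin hypothesis supplies the extra input: any uncountable $Y \subseteq \mb S$ contains uncountably many comparable pairs, since otherwise the $Y$-elements sitting in no comparable pair of $Y$ would form an uncountable antichain. Supposing $|W^+| = \omg$, a recursive construction (using the regularity of $\omg$ and the fact that $N^+(s_\alpha) \cap W$ is countable for $s_\alpha \in W^+$) produces an uncountable $Y \subseteq W^+$ such that $s_\beta \notin N^+(s_\alpha)$ for $\alpha < \beta$ in the enumeration indices of $Y$. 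Combining the Suslin fact with the choice of $f$ applied to the two projections of the comparable pairs inside $Y$, one extracts $\alpha < \beta$ in $Y$ with $s_\alpha, s_\beta$ comparable in $\mb S$ and $f(\alpha,\beta)=0$, forcing $s_\beta \in N^+(s_\alpha)$ and contradicting the construction of $Y$. The symmetric argument handles $W^-$, completing the proof.
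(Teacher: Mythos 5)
Your setup is right and you've correctly located where the difficulty lives, but the step you then try to push through is precisely the one that cannot be done directly, and the forcing device you note ("Steprans' trick") is not a technical refinement of your argument --- it is the essential content that your argument is missing.

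The relation $\omg\nrightarrow[\omg;\omg]^2_2$ is a statement about \emph{rectangles}: given uncountable $A,B\subseteq\omg$, every colour is realised by some pair $(\alpha,\beta)\in A\times B$ with $\alpha<\beta$. In Theorem~\ref{Komgprop}(1) this is exactly what is needed, because in $K_\kappa$ every pair in $A\times B$ is an edge. In $G_{\mb S}$ this is false in two fatal ways. First, to close a cycle at a vertex $v$ you must find $\alpha\in N^+(v)\cap W$ and $\beta\in N^-(v)\cap W$ that are \emph{$\mb S$-comparable}; having both sets uncountable does not guarantee even a single comparable pair between them, since both sit inside the Suslin subtree $v^{\uparrow}$ and can be separated onto disjoint branches. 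Second, in your $W^+$-argument you observe that an uncountable $Y$ contains uncountably many comparable pairs, but that set of pairs is not of the form $A\times B$; indeed $G_{\mb S}$ contains no $K_{\omg,\omg}$ at all (if every $a\in A$ is comparable to every $b\in B$ then, using that the predecessors of a node form a chain, one of $A,B$ must be a chain and hence countable). So there is no rectangle of comparable pairs to feed into the partition witness $f$, and "applying the choice of $f$ to the two projections" has no content. The same issue kills the passage from $\arr{C}_3$ to $\arr{C}_n$: Corollary~\ref{treeembed} gives a directed path, but extending it forward and backward by comparable vertices of the correct colour again requires the partition relation on a non-rectangular edge set.

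The paper's proof circumvents exactly this by a forcing-and-absoluteness argument. It takes $f$ to be Moore's colouring from \cite{Lspace}, which is defined from a ladder system in a way that is absolute between $\omg$-preserving extensions; it orients by $\arr{ab}$ iff $a<_{\mb S}b$ and $f(a,b)=0$. Given uncountable $T\subseteq\mb S$, one forces with $T$ (ccc, since $T$ is again Suslin) to add an uncountable chain $A\subseteq T$; then $G_{\mb S}[A]$ \emph{is} a complete graph, $f$ still witnesses $\omg\nrightarrow[\omg;\omg]^2_2$ in the extension, and the argument of Theorem~\ref{Komgprop}(1) produces a directed $n$-cycle inside $A$. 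Because both the tree order on $A$ and the values $f(a,b)$ are absolute, "these $n$ vertices form a directed $n$-cycle in $D$" is already true in the ground model. Your proof attempt would need a ground-model substitute for this branch, and there is none: an uncountable chain through a Suslin tree simply does not exist until you force one.
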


\begin{proof} We work in a model $V$ of $ZFC$ with a Suslin tree $\mb S$ on $\omega_1$. As before, we pick $f:[\omega_1]^2\to 2$ with the property that $f''\{\{a,b\}:a\in A,b\in B,a<b\}=2$ whenever $A,B\in [\omega_1]^\omg$. Such functions were defined in \cite{Lspace}  from a ladder system on $\omega_1$ in a \emph{robust} way: if our model of set theory $V$ is extended to another model $W$ preserving $\omega_1$ then $f(a,b)$ evaluated in $V$ and $W$ agree. Now let $\overrightarrow{ab}$ if and only if $a<_{\mb S}b$ and $f(a,b)=0$.

Given $T\in [\mb S]^\omg$ and $n\in \oo$ at least 3, we need to find a copy of $\arr{C_n}$ in $G_{\mb S}[T]$.  $T$ as a subtree of $\mb S$ is still Suslin, and hence forcing with $T$ over our model $V$ preserves cardinals (by ccc) and introduces an uncountable set $A\subseteq T$ so that $H=G_{\mb S}[A]$ is complete.

Now, working in the larger model $V^T$, the function $f$ still witnesses $\omg\nrightarrow[\omg;\omg]^2_2$ and so
by Theorem \ref{Komgprop} we can find a directed $n$-cycle $v_0...v_{n-1}$ in $A$. However, the fact that  $v_0...v_{n-1}$ forms an $n$-cycle in $T$ is absolute (see the remark on $f$ earlier) so this must be true in our original model $V$ as well.

\end{proof}

Let us state (without presenting the proof) that another class of graphs defined from well behaved non-special trees admit similar orientations: suppose that $S\subseteq \omg$ is stationary and let ${\sigma(S)}$ denote the poset on $\{t\subseteq S:t \text{ is closed}\}$ where $s\leq t$ if and only if $s$ is an initial segment of $t$.

\begin{theorem}
$\displaystyle{\ENL{G_{\sigma(S)}}{\bigwedge_{3\leq n\in \oo}\arr{C_n}}{\oo}}$ for any stationary $S\subseteq \omg$. 
\end{theorem}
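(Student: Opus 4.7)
My plan is to adapt the forcing argument used for the Suslin tree case. Let $f:[\omg]^2\to 2$ be a \emph{robust} witness to $\omg\nrightarrow [\omg;\omg]^2_2$, of the type constructed in \cite{Lspace} from a ladder system, so that $f(\alpha,\beta)$ is unchanged in any $\omg$-preserving extension of $V$. For every non-empty $t\in \sigma(S)$, set $\mu(t)=\max(t)$. I will orient $G_{\sigma(S)}$ by declaring $st\in E(D)$ whenever $s\subsetneq t$ and $f(\mu(s),\mu(t))=0$, and $ts\in E(D)$ otherwise (edges incident to $\emptyset$ can be oriented arbitrarily). The crucial feature: for any chain $T\subseteq \sigma(S)$, the map $t\mapsto \mu(t)$ is an isomorphism between $D[T]$ and the $f$-induced orientation of the complete graph on $\mu[T]$ coming from Theorem \ref{Komgprop}(1).

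Now suppose $\sigma(S)=\bigcup_{i<\oo}A_i$ is a countable partition and $n\geq 3$; I aim to find a monochromatic directed $n$-cycle in $D$. Force with $\sigma(S)$ over $V$. Using that $\sigma(S)$ is $\omg$-preserving for stationary $S$ (this is the ``well-behavedness'' alluded to in the statement), the generic filter $\mc G$ is an uncountable chain of elements of $\sigma(S)^V$ in $V[\mc G]$. Hence $\mc G\cap A_i$ is uncountable in $V[\mc G]$ for some $i<\oo$.

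In $V[\mc G]$, by robustness, $f$ continues to witness $\omg\nrightarrow [\omg;\omg]^2_2$. Applying Theorem \ref{Komgprop}(1) inside $V[\mc G]$ to the uncountable $W=\mu[\mc G \cap A_i]\subseteq \omg$, I obtain $\mu(t_0),\dots,\mu(t_{n-1})\in W$ forming a directed $n$-cycle in the $f$-orientation of $K_\omg$. Pulling back, $t_0,\dots,t_{n-1}\in A_i$ all sit on the chain $\mc G$, so they are pairwise comparable in $\sigma(S)$, and the orientation of $D$ between them exactly matches the $f$-cycle. Since the $t_j$ lie in $\sigma(S)^V$, since comparability in $\sigma(S)$ is absolute, and since the values of $f$ on ground-model ordinals are preserved by robustness, this monochromatic directed $n$-cycle is already present in $V$, yielding $D\to (\arr{C}_n)^1_\oo$ for each $n\geq 3$ as required.

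The main delicate point is the $\omg$-preservation of $\sigma(S)$ for an arbitrary stationary $S$: when $S$ contains a club this is immediate (and the forcing step can be bypassed by working directly with an uncountable chain of initial segments of the club inside $V$), while in the genuinely co-stationary case one has to invoke the classical properness of the club-shooting poset. Once this step is in hand, absoluteness together with the partition relation transferred from Theorem \ref{Komgprop}(1) delivers the rest of the argument essentially for free.
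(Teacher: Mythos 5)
Your argument is genuinely different from the one the paper has in mind. The paper's hint is to run a ZFC version of the $\diamondsuit^+$/elementary-submodel construction of Theorem~\ref{posrel}, using the ladder-system machinery of \cite{soukuptrees} tailored to $\sigma(S)$ (i.e.\ build the orientation inductively along a decomposition of the tree by a chain of models). You instead adapt the Suslin-tree proposition: pull the complete-graph Theorem~\ref{Komgprop}(1) through an absoluteness argument, replacing ``force with a Suslin tree to get a complete subgraph'' by ``force with $\sigma(S)$ to get a long chain,'' and transporting the orientation along $\mu=\max$. That transport is correct: on any chain $\max$ is strictly increasing, so it is a digraph isomorphism between $D$ restricted to the chain and the $f$-orientation of $K_{\omg}$ restricted to its image; the generic filter is a chain of ground-model conditions, one colour class is uncountable once $\omega_1$ is preserved, and a directed $n$-cycle found in the extension consists of finitely many ground-model vertices whose comparability, colours, and $f$-values are all absolute, so it lives in $V$. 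Your route is shorter and more conceptual than the decomposition argument the paper gestures at.

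Two points need repair or expansion. First, the club-shooting poset for an arbitrary stationary $S\subseteq\omg$ is emphatically \emph{not} proper when $\omg\setminus S$ is stationary: it kills the stationarity of $\omg\setminus S$, which no proper forcing can do. What you actually need --- and what is true, by the classical Baumgartner--Harrington--Kleinberg argument (build a fusion sequence inside a countable elementary submodel whose intersection with $\omg$ lies in $S$) --- is $\omega_1$-preservation, not properness. Second, the transfer of the property ``$f$ witnesses $\omg\nrightarrow[\omg;\omg]^2_2$'' into $V[\mathcal G]$ deserves a word beyond ``robustness.'' The paper's parenthetical remark about robustness only asserts that the \emph{values} of $f$ are preserved; in the Suslin-tree case the forcing is ccc, so stationary sets and the relevant elementary-submodel arguments survive trivially, but $\sigma(S)$ is not ccc and does destroy stationarity of $\omg\setminus S$. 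The correct justification is that Moore's oscillation construction gives a $\Sigma_1$-definable witness from an arbitrary ladder system, and the statement is a ZFC theorem applied internally in $V[\mathcal G]$ to the ground-model ladder system (whose defining properties --- being an $\omega$-cofinal sequence at each limit --- are absolute). With those two adjustments the proof goes through; without them the reader can reasonably object that you have only proved the result when $S$ contains a club.
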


Indeed, one can combine the machinery of \cite{soukuptrees} and the $\diamondsuit^+$-argument from Theorem \ref{posrel} to prove this result. Note that neither $G_{\sigma(S)}$ nor $G_{\mb S}$ for $\mb S$ Suslin contains an uncountable complete subgraph.\\

Next, we prove that shift graphs defined on large enough sets have large dichromatic number. Let $\Sh_n(\lambda)$ denote the graph on vertices $[\lambda]^n$ and edges $\{\xi_i:i<n\}\{\xi_j:1\leq j<n+1\}$ where $\xi_0<\dots<\xi_n\in \lambda$.

\begin{theorem}\label{Shiftthm}
$\ENL{\Sh_n(\exp_n(\kappa))}{\arr{C}_4}{\kappa}$ for all $2\leq n<\oo$. In particular, $$\dchr{\Sh_n(\exp_n(\kappa))}>\kappa.$$
\end{theorem}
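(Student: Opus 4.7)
The plan is to combine the Erd\H os--Rado partition relation with a tailored orientation that exploits the combinatorial structure of 4-cycles in shift graphs.

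First a structural observation: every 4-cycle in $\Sh_n(\lambda)$ has its four vertices sharing a common $(n-1)$-subset $\sigma\in[\lambda]^{n-1}$; two of them have the form $\{a\}\cup\sigma$ (with $a<\min\sigma$) and two have the form $\sigma\cup\{b\}$ (with $b>\max\sigma$), jointly forming a $K_{2,2}$. This is because consecutive edges in $\Sh_n$ share an $(n-1)$-subset of their common vertex, and traversing the cycle while avoiding $v_3=v_1$ forces the shared subset to stabilize (a small case analysis, easiest on the two neighbor-types of each vertex, rules out all alternatives).

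Set $\lambda = \exp_n(\kappa)$. Given any $\kappa$-coloring $c:[\lambda]^n\to\kappa$, the Erd\H os--Rado partition relation $\exp_{n-1}(\kappa)^+\to(\kappa^+)^n_\kappa$---valid since $\lambda = 2^{\exp_{n-1}(\kappa)} \geq \exp_{n-1}(\kappa)^+$---yields a monochromatic set $A\in[\lambda]^{\kappa^+}$ with $c\uhr[A]^n$ constant. Hence it suffices to construct an orientation $D$ of $\Sh_n(\lambda)$ such that $D[[A]^n]$ contains a directed 4-cycle for every $A\in[\lambda]^{\kappa^+}$. Identifying $\lambda = 2^{\exp_{n-1}(\kappa)}$, view each $\alpha<\lambda$ as a function $\alpha:\exp_{n-1}(\kappa)\to 2$; fix an encoding $\eta:[\lambda]^{n-1}\to\exp_{n-1}(\kappa)$ and orient the edge between $\{\alpha\}\cup\sigma$ and $\sigma\cup\{\beta\}$ (with $\alpha<\min\sigma<\max\sigma<\beta$) as $\{\alpha\}\cup\sigma\to\sigma\cup\{\beta\}$ iff $\alpha(\eta(\sigma)) = \beta(\eta(\sigma))$. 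A short computation checking each of the four edge-orientations required for $\arr{C}_4$ shows that the 4-cycle on $\{a\}\cup\sigma,\sigma\cup\{b\},\{c\}\cup\sigma,\sigma\cup\{d\}$ becomes a directed $\arr{C}_4$ precisely when $a(\eta(\sigma))=b(\eta(\sigma))$ and $c(\eta(\sigma))=d(\eta(\sigma))$ while $a(\eta(\sigma))\neq c(\eta(\sigma))$, i.e., $\{a,b\}$ share one bit value at coordinate $\eta(\sigma)$ and $\{c,d\}$ share the other.

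To complete the argument, for any $A\in[\lambda]^{\kappa^+}$ (viewed as $\kappa^+$ distinct functions) one must find $\sigma\in[A]^{n-1}$ together with $a,c\in A\cap\min\sigma$, $b,d\in A\setminus(\max\sigma+1)$ realizing this pattern. The main obstacle will be a judicious choice of the encoding $\eta$: one needs $\eta$ so that for every such $A$, some $\sigma\in[A]^{n-1}$ has $\eta(\sigma)$ a ``separating'' coordinate---meaning both $A\cap\min\sigma$ and $A\setminus(\max\sigma+1)$ contain elements of both bit-values at coordinate $\eta(\sigma)$. This can be arranged by a transfinite recursion defining $\eta$ while diagonalizing against each $\kappa^+$-sized subset of $\lambda$, using that $|[A]^{n-1}|=\kappa^+$ is large enough to thread $\eta(\sigma)$ through a genuinely separating coordinate; equivalently, one may work directly with an auxiliary function $f:[\lambda]^{n+1}\to 2$ (orienting via $f$) whose oscillation pattern on large subsets is controlled using the negative partition relation $\lambda\not\to(\kappa^+)^{n+1}_2$.
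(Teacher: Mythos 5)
Your structural observation is correct and in the same spirit as the paper's: every $C_4$ in $\Sh_n(\lambda)$ consists of four vertices $\{a\}\cup\sigma,\ \sigma\cup\{b\},\ \{c\}\cup\sigma,\ \sigma\cup\{d\}$ with a common $(n-1)$-subset $\sigma$, $a,c<\min\sigma$ and $\max\sigma<b,d$; the paper's construction aims at exactly these $K_{2,2}$'s. After that the strategies diverge, and the divergence introduces a gap I do not believe can be closed.

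The issue is the Erd\H os--Rado reduction. The partition relation $\exp_{n-1}(\kappa)^+\to(\kappa^+)^n_\kappa$ hands you a monochromatic $A$ of size only $\kappa^+$, so your plan forces you to build one orientation $D$ of $\Sh_n(\lambda)$ such that $D[[A]^n]$ contains a directed $4$-cycle for \emph{every} $A\in[\lambda]^{\kappa^+}$. This is a far stronger uniformity than the theorem asserts, and your proposed $\eta$-orientation does not achieve it. For $n=2$ take any $\eta:\lambda\to\exp_1(\kappa)$. Since $\lambda=2^{\exp_1(\kappa)}>\exp_1(\kappa)\cdot\kappa^+$, some fiber $\eta^{-1}(\gamma)$ has size $>\kappa^+$, and one of the two bit-classes $\{\alpha\in\eta^{-1}(\gamma):\alpha(\gamma)=0\}$, $\{\alpha\in\eta^{-1}(\gamma):\alpha(\gamma)=1\}$ still has size $\geq\kappa^+$. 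For any $A\in[\lambda]^{\kappa^+}$ contained in that class we have $\eta(x)=\gamma$ for all $x\in A$ and $\alpha(\gamma)$ constant on $A$, so by your rule every edge of $D[[A]^2]$ is oriented $\{a,x\}\to\{x,b\}$; the induced digraph is then acyclic (the destination of each arc has larger maximum), and there is no directed $4$-cycle. No ``diagonalization'' when choosing $\eta$ can avoid this, since the fiber collapse follows from a cardinality count that is independent of which $\eta$ you pick. And since the Erd\H os--Rado theorem is essentially sharp, you cannot extract a monochromatic set of size much beyond $\kappa^+$ either. The paper avoids this bottleneck entirely: it never passes to a monochromatic set. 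Instead it fixes a list of pairs $(A_\beta,g_\beta)$ with $A_\beta\in[\lambda]^{\exp_{n-1}(\kappa)}$ and $g_\beta$ an arbitrary (not constant) $\kappa$-colouring of $[A_\beta]^n$, builds the orientation to ``plant'' a $g_\beta$-monochromatic $\arr{C}_4$ for each $\beta$, and finally recovers the actual colouring via an elementary submodel of size $\exp_{n-1}(\kappa)$ and a chain of maps $G_{n-1},\dots,G_0$. That size differential --- $\exp_{n-1}(\kappa)$ against your $\kappa^+$ --- is exactly what the argument needs and what the Erd\H os--Rado reduction throws away.
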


As $\Sh_n(\lambda)$ has no odd cycles of length less than $2n$, we get:

\begin{cor}There are digraphs with arbitrary large dichromatic number and large odd (undirected) girth.
\end{cor}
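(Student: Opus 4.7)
Set $\lambda = \exp_n(\kappa)$ and $\mu = \exp_{n-1}(\kappa)$, so $\lambda = 2^\mu$, and identify each $\xi\in\lambda$ with a function $f_\xi:\mu\to 2$. My plan is to orient the shift edges so that every induced sub-orientation on a $\kappa^+$-sized set of vertices carries a directed $\arr{C}_4$, and then extract the monochromatic cycle by Erd\H{o}s--Rado applied to any $\kappa$-coloring.

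Every $4$-cycle in $\Sh_n(\lambda)$ arises by pivoting about some $(n-1)$-set $M$: given $a\neq a'$ below $\min M$ and $b\neq b'$ above $\max M$, the four vertices $\{a\}\cup M$, $M\cup\{b\}$, $\{a'\}\cup M$, $M\cup\{b'\}$ form a $4$-cycle whose four edges are all shift edges with pivot $M$. Since each shift edge has a unique pivot, I orient the bipartite family of edges through each $M$ independently: for each $M\in [\lambda]^{n-1}$ pick a coordinate $\alpha_M\in\mu$ and declare the shift edge between $\{x\}\cup M$ and $M\cup\{y\}$ (where $x<\min M\leq\max M<y$) to be oriented $\{x\}\cup M\to M\cup\{y\}$ iff $f_x(\alpha_M)\neq f_y(\alpha_M)$. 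A direct case analysis verifies that whenever $f_a(\alpha_M)\neq f_{a'}(\alpha_M)$ and $f_b(\alpha_M)\neq f_{b'}(\alpha_M)$, the four pivot vertices form a directed $\arr{C}_4$.

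Given any coloring $c:[\lambda]^n\to\kappa$, the Erd\H{o}s--Rado theorem (applicable since $\lambda\geq \exp_{n-1}(\kappa)^+$) yields $H\in[\lambda]^{\kappa^+}$ on which $c$ is constant, and it then suffices to locate $M\in[H]^{n-1}$ whose coordinate $\alpha_M$ takes both bits on each of $\{f_h(\alpha_M):h\in H,\, h<\min M\}$ and $\{f_h(\alpha_M):h\in H,\, h>\max M\}$; all four resulting pivot vertices then lie in $[H]^n$ and are monochromatic. The main obstacle is precisely the construction of the assignment $M\mapsto\alpha_M$ (possibly jointly with the bijection $\lambda\leftrightarrow 2^\mu$) guaranteeing this simultaneous splitting for \emph{every} $H\in[\lambda]^{\kappa^+}$; naive choices fail against adversarial $H$, and a careful transfinite recursion using the full cardinality $\lambda=2^{\exp_{n-1}(\kappa)}$---beyond the $\exp_{n-1}(\kappa)^+$ that already suffices for $\chr{\Sh_n(\lambda)}>\kappa$---is where the strength of the hypothesis is used.
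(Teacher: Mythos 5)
Your proposal is attempting to re-derive Theorem \ref{Shiftthm} from scratch rather than deduce the corollary from it. The paper's proof of this corollary is a one-liner: apply the theorem to get an orientation $D$ of $\Sh_n(\lambda)$ (where $\lambda=\exp_n(\kappa)$) with $\chr D > \kappa$, and then observe that $\Sh_n(\lambda)$ contains no odd cycle of length $<2n$, so the underlying undirected graph has odd girth at least $2n+1$; letting $n$ and $\kappa$ grow gives the result. You never mention the odd girth property at all, which is the essential observation converting the theorem about dichromatic number into the claim about odd girth.

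As a would-be proof of Theorem \ref{Shiftthm}, your construction is genuinely different from the paper's (the paper defines the orientation by a transfinite recursion threading through an enumeration $\{(A_\beta,g_\beta)\}$ of candidate homogeneous sets and colorings, handling them one at a time via the diagonal Case 1/Case 2/Case 3 machinery). Your coordinate-per-pivot orientation $\{x\}\cup M \to M\cup\{y\}$ iff $f_x(\alpha_M)\neq f_y(\alpha_M)$ is structurally cleaner, and your verification that differing bit-values on both sides of the pivot yield $\arr{C}_4$ is correct. However, you acknowledge yourself that you have not constructed the assignment $M\mapsto\alpha_M$ that guarantees, for every $H\in[\lambda]^{\kappa^+}$, the existence of a good pivot $M\in[H]^{n-1}$ with both sides of $H$ split at coordinate $\alpha_M$. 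This is not a technical loose end but the entire content of the theorem: without it you have an orientation scheme with no proof that any $\kappa$-coloring of its vertices admits a monochromatic $\arr{C}_4$. Until that step is supplied, the argument is a plan, not a proof.
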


\begin{cor} Any digraph $F$ which embeds into all digraphs $D$ with $\chr D>\oo$ must be bipartite.
\end{cor}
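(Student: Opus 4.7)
The plan is to read the corollary straight off Theorem \ref{Shiftthm} together with the odd-girth bound recorded immediately above it. Bipartiteness of the digraph $F$ amounts to its underlying undirected graph having no odd cycles, so it is enough to show that this undirected reduct has odd girth at least $2n+1$ for every $n$.

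First I would invoke Theorem \ref{Shiftthm} with $\kappa=\omega$: for each integer $n\geq 2$ it produces an orientation $D_n$ of $\Sh_n(\exp_n(\omega))$ with $D_n\to(\arr{C}_4)^1_\omega$, so in particular $\chr{D_n}>\omega$. By the hypothesis of the corollary, $F\hookrightarrow D_n$ for every such $n$. Since $\hookrightarrow$ denotes embedding as a not necessarily induced subgraph, the underlying undirected graph of $F$ embeds into the underlying undirected graph of $D_n$, which is precisely $\Sh_n(\exp_n(\omega))$.

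Now I would apply the observation recorded just before the corollary, that $\Sh_n(\lambda)$ contains no odd cycles of length less than $2n$. This forces the undirected reduct of $F$ to have no odd cycle of length $<2n$. Since $n$ was arbitrary, the undirected reduct of $F$ has no odd cycle at all, which is exactly what bipartiteness asserts.

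There is no genuine obstacle here; the one point that is worth flagging explicitly is that a digraph embedding preserves cycles in the underlying undirected graph of every length and parity, which is immediate from the definition of $\hookrightarrow$ given in Section \ref{prelimsec} (an embedding sends arcs to arcs, so in particular it sends undirected edges to undirected edges).
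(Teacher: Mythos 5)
Your proof is correct and takes exactly the route the paper intends: instantiate Theorem \ref{Shiftthm} at $\kappa=\omega$ to obtain, for each $n\geq 2$, a digraph of uncountable dichromatic number whose underlying undirected graph is $\Sh_n(\exp_n(\omega))$, then use the odd-girth bound on shift graphs and let $n\to\infty$ to conclude that $F$ has no odd cycle. The paper states the corollary without a separate proof precisely because it is this immediate consequence of the theorem and the preceding remark on odd girth.
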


Lastly, we encourage the reader to keep the $n=2$ and $\kappa=\oo$ case in mind when reading the following proof; otherwise the technical details might overshadow the actual ideas involved.

\begin{proof}[Proof of Theorem \ref{Shiftthm}]
Let $\lambda= \exp_n(\kappa)$. We construct an orientation $D$ of $Sh_n(\lambda)$ so that whenever $G:[\lambda]^n\to \kappa$ then there is a monochromatic directed $4$-cycle. In particular, we aim for a copy of $\arr{C}_4$ of the following form: the vertices will be $\{\alpha_0\}\cup R,R\cup \{\beta\},\{\alpha_1\}\cup R,R\cup \{\alpha_3\}$ where $|R|=n-1$ and $\alpha_0<\alpha_1<R<\alpha_3<\beta$. 

 List all  pairs $(A,g)$ where $A\in[\lambda]^{\exp_{n-1}(\kappa)}$, $g:[A]^n\to \kappa$ as $\{(A_\beta,g_\beta):\beta\in S^\lambda_{\kappa^+}\}$ so that $\sup A_\beta<\beta$.

By induction on $\beta$ define the orientation of edges of the form $\{\alpha\}\cup R, R\cup \{\beta\}$ where $\alpha<R<\beta$ and $|R|=n-1$. In short, the $n+1$-tuple $\{\alpha\}\cup R\cup\{\beta\}$ will be oriented either up (meaning ${\{\alpha\}\cup R,R\cup \{\beta\}}\in E(D)$) or down (meaning ${R\cup \{\beta\},\{\alpha\}\cup R}\in E(D)$).

For notational simplicity we will use $\alpha R$, $R\beta$, $\alpha R\beta$ for $\{\alpha\}\cup R$, $R\cup \{\beta\}$ and $\{\alpha\}\cup R\cup\{\beta\}$ respectively.

Now fix $\beta\in S^\lambda_{\kappa^+}$ and $R\in [\beta]^{n-1}$. We define by induction on $i<\kappa$ disjoint finite sets $a_{\beta,R,i}\in A_\beta\cap \min(R)$ and direct the $n+1$-tuples of the form $\alpha R\beta$ where $\alpha\in a_{\beta,R,i}$.

Given $i<\kappa$ and the finite sets $a_{\beta,R,j}$ for $j<i$, we consider three cases:

\textbf{Case 1.} If there is $\alpha_0<\alpha_1\in  (A_\beta\cap \min(R))\setm \bigcup\{a_{\beta,R,j}:j<i\}$ and $\alpha_3\in A_\beta\setm \max(R)$ so that 
\begin{enumerate}
 \item $\alpha_1R \alpha_3$ and $\alpha_0 R \alpha_3$ are oriented differently (one up, other down), and
\item $g_\beta$ is constant $i$ on $\alpha_0 R,\alpha_1 R,R\alpha_3$.
\end{enumerate}
Then we let $a_{\beta,R,i}=\{\alpha_0,\alpha_1\}$ and define the orientation of $\alpha_0 R\beta$ and $\alpha_1 R\beta$ so that $\alpha_0 R,R\beta,\alpha_1 R,R\alpha_3$ is a copy of $\arr{C}_4$. 

\textbf{Case 2.} Suppose that Case 1 fails but there is   $\alpha_0<\alpha_1\in A_\beta\cap \min (R)\setm \bigcup\{a_{\beta,R,j}:j<i\}$ so that 
 $g_\beta$ is constant $i$ on $\alpha_0 R,\alpha_1 R$.
Then we let $a_{\beta,R,i}=\{\alpha_0,\alpha_1\}$ and define the orientation of $\alpha_1 R\beta$ and  $\alpha_0 R\beta$ differently. 

\textbf{Case 3.} If both Case 1 and Case 2 fails then we let $a_{\beta,R,i}=\emptyset$.

This finishes the induction on $i<\kappa$ and in turn completes the definition of $D$.

Now, suppose that $G:[\lambda]^n\to \kappa$  and our aim is to find a monochromatic $4$-cycle. Take a $\kappa$-closed elementary submodel $M$ of size $\exp_{n-1}(\kappa)$ so that $D,G\in M$ and $\exp_{n-1}(\kappa),\mc{P}^k(\kappa)\subseteq M$ for $k\leq n-1$. Here $\mc P$ is the power set operator, $\mc P^0(\kappa)=\kappa$ and $\mc P^{k+1}(\kappa)=\mc P(\mc P^k(\kappa))$.

 Find $\beta  \in S^\lambda_{\kappa^+}$ so that $$(A_\beta,g_\beta)=(M\cap \lambda,G\uhr [M\cap \lambda]^2).$$

Now we define a sequence of maps $G_0,G_1\dots G_{n-1}$ so that $$G_{n-k}:[\lambda]^{k}\to \mc P^{n-k}(\kappa)$$ as follows. We define $G_0:[\lambda]^n\to \kappa$ simply by $G_0 =G$. Next, we define $G_1:[\lambda]^{n-1}\to \mc P(\kappa)$ by 

\begin{equation}
G_1(x_1,\dots,x_{n-1})=\{i\in \kappa:|\{\xi<x_1:G_0(\xi,x_1,\dots x_{n-1})=i\}|\geq \exp_{n-1}(\kappa)\}\in \mc P(\kappa).
\end{equation}

In general, given $G_{n-k-1}$, we let 
\begin{equation}
G_{n-k}(x_{n-k},\dots,x_{n-1})=\{i\in \mc P^{n-k-1}(\kappa):|\{\xi<x_{n-k}:G_{n-k-1}(\xi,x_{n-k},\dots x_{n-1})=i\}|\geq \exp_{k}(\kappa)\}.
\end{equation}
Finally, for $k=1$, we let

\begin{equation}
G_{n-1}(x_{n-1})=\{i\in \mc P^{n-2}(\kappa):|\{\xi<x_{n-1}:G_{n-2}(\xi,x_{n-1})=i\}|\geq \exp_{1}(\kappa)\}\in \mc P^{n-1}(\kappa).
\end{equation}

Note that $G_{n-1}(\beta)\in M$ by the assumptions on $M$. 

\begin{claim}There is a decreasing sequence of ordinals $\xi_{n-1},\xi_{n-2}\dots \xi_0$ and $\in$-decreasing $i_{n-1},i_{n-2}\dots i_0$ with the following properties:
\begin{enumerate}
 \item $\xi_{n-1}=\beta$ and $i_{n-1}=G_{n-1}(\beta)$,
\item $\xi_{n-2}\in A_\beta \cap \xi_{n-1}=A_\beta$ so that 
\begin{enumerate}
\item $G_{n-1}(\xi_{n-2})=i_{n-1}=G_{n-1}(\xi_{n-1})$, and
\item $i_{n-2}=G_{n-2}(\xi_{n-2},\xi_{n-1})\in i_{n-1}$;
\end{enumerate}
\item in general, $\xi_{n-k-1}\in A_\beta\cap \xi_{n-k}$ so that 
\begin{enumerate}
\item $G_{n-k}(\xi_{n-k-1}\dots\xi_{n-2})=i_{n-k}=G_{n-k}(\xi_{n-k}\dots\xi_{n-1})$, and
\item $i_{n-k-1}=G_{n-k-1}(\xi_{n-k-1},\xi_{n-k}\dots\xi_{n-1})\in i_{n-k}$
\end{enumerate}
where $k=0\dots n-1$.
\end{enumerate}
\end{claim}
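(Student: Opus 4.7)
The plan is to prove the claim by induction on $k=0,1,\dots,n-1$. For the base case $k=0$, set $\xi_{n-1}=\beta$ and $i_{n-1}=G_{n-1}(\beta)$; the only thing to check is that $i_{n-1}\neq\emptyset$, which follows from a pigeonhole count: if each $j\in\mc P^{n-2}(\kappa)$ were realized by fewer than $\exp_1(\kappa)$ many $\xi<\beta$ as $G_{n-2}(\xi,\beta)$, the total count would be at most $\exp_{n-2}(\kappa)\cdot\exp_1(\kappa)<\beta$, using $\beta>\sup A_\beta\geq\exp_{n-1}(\kappa)$.

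For the inductive step at stage $k\geq 1$, suppose $\xi_{n-1}>\dots>\xi_{n-k}$ and $i_{n-1}\ni\dots\ni i_{n-k}$ have been constructed, with $\xi_{n-j}\in A_\beta$ for $1\leq j\leq k-1$. Two observations are central. First, $i_{n-k}\in\mc P^{n-k}(\kappa)\subseteq M$ and $\xi_{n-k},\dots,\xi_{n-2}\in A_\beta\subseteq M$, so the ``good set'' $T=\{\xi<\xi_{n-k}:G_{n-k}(\xi,\xi_{n-k},\dots,\xi_{n-2})=i_{n-k}\}$ is definable in $M$ from parameters in $M$, hence $T\in M$. Second, the inductive identity $i_{n-k}=G_{n-k}(\xi_{n-k},\dots,\xi_{n-1})$ combined with the definition of $G_{n-k}$ gives a pigeonhole bound: the ``bad set'' $B=\{\xi<\xi_{n-k}:G_{n-k-1}(\xi,\xi_{n-k},\dots,\xi_{n-1})\notin i_{n-k}\}$ has size at most $\exp_{n-k-1}(\kappa)\cdot\exp_k(\kappa)\leq\exp_{n-1}(\kappa)$. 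The step reduces to exhibiting $\xi_{n-k-1}\in(T\cap A_\beta)\setminus B$; having done so, $i_{n-k-1}:=G_{n-k-1}(\xi_{n-k-1},\xi_{n-k},\dots,\xi_{n-1})$ lies in $i_{n-k}$ precisely because $\xi_{n-k-1}\notin B$.

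The main obstacle is verifying $(T\cap A_\beta)\setminus B\neq\emptyset$, since $|B|$ may be as large as $|A_\beta|=\exp_{n-1}(\kappa)$. Condition (a) alone is easy: either $T\in M$ (for $k\geq 2$) or $T=T^{(0)}\cap\beta$ with $T^{(0)}=\{\xi<\lambda:G_{n-1}(\xi)=i_{n-1}\}\in M$ (for $k=1$), and in either case elementarity of $M$ produces witnesses in $A_\beta$. The plan for condition (b) is to bootstrap from the top: at $k=1$, $\beta\in T^{(0)}\setminus M$ forces $|T^{(0)}|>\exp_{n-1}(\kappa)$ by elementarity (using $\exp_{n-1}(\kappa)\subseteq M$, explicitly assumed in the setup), which then yields $|T\cap A_\beta|=\exp_{n-1}(\kappa)>|B|$. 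One propagates this kind of cardinality estimate to higher $k$ using the recursive structure of the $G_{n-k}$'s, the $\kappa$-closedness of $M$, and the reflecting property $(A_\beta,g_\beta)=(M\cap\lambda,G\uhr[M\cap\lambda]^n)$ characterizing $\beta$. Carrying this cardinality bookkeeping cleanly through all $n-1$ inductive steps, so that $|T\cap A_\beta|$ stays strictly above $|B|$ at each level, is the technical heart of the argument.
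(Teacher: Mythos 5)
Your base case and the reduction of the inductive step to finding $\xi_{n-k-1}\in (T\cap A_\beta)\setminus B$ are both correct, and you have correctly identified that the whole difficulty is to guarantee that this set is nonempty. But you leave precisely that step as ``the technical heart,'' with only a vague promise of bootstrapping cardinality estimates. That promise is not redeemed, and as written the proof has a genuine gap: there is no argument for why $|T\cap A_\beta|$ stays above $|B|$ once $k\geq 2$. Indeed, at the final stage $k=n-1$ your own bound gives $|B|\leq \exp_0(\kappa)\cdot \exp_{n-1}(\kappa)=\exp_{n-1}(\kappa)=|A_\beta|$, so an a priori cardinality comparison between $T\cap A_\beta$ and $B$ simply cannot close the argument on its own.

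The missing ingredient is the pigeonhole/cofinality step, which the paper builds into each stage of the recursion. Rather than fixing a ``bad set'' $B$ in advance and then hoping some element of $T\cap A_\beta$ misses it, the paper partitions the good set inside $M$ by the next-level value $\xi\mapsto G_{n-k-1}(\xi,\xi_{n-k},\dots,\xi_{n-1})$. There are only $|\mc P^{n-k-1}(\kappa)|=\exp_{n-k-1}(\kappa)$ possible values, and K\H onig's inequality gives $\cf(\exp_{n-1}(\kappa))>\exp_{n-k-1}(\kappa)$, so one of the resulting classes has full size; the paper then \emph{defines} $i_{n-k-1}$ to be the value of that class and takes $\xi_{n-k-1}$ inside it. The point is that the membership $i_{n-k-1}\in i_{n-k}=G_{n-k}(\xi_{n-k},\dots,\xi_{n-1})$ is then automatic from the size of the popular class, by the very definition of $G_{n-k}$; the ``bad set'' never has to be analysed at all. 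In your formulation, $B$ is exactly the union of the small classes of this partition, so choosing a popular class is the same as leaving $B$, but you have to exhibit the popular class by pigeonhole and make it carry forward (inside $M$) as the new good set for stage $k+1$. Without that mechanism, the claim that $|T\cap A_\beta|$ remains large enough is unsupported, and that is where the proof currently fails.

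Two secondary remarks. First, you never need $i_{n-1}\neq\emptyset$ as a separate base-case check; the cofinality argument at stage $k=1$ produces $i_{n-2}\in i_{n-1}$ directly and hence shows $i_{n-1}\neq\emptyset$ as a byproduct. Second, your case distinction at $k=1$ (using $\beta\in T^{(0)}\setminus M$ to force $|T^{(0)}|>\exp_{n-1}(\kappa)$) is correct and is the same device the paper uses to start the recursion with a good set of size $\exp_{n-1}(\kappa)$ inside $M$ (this is where the choice $(A_\beta,g_\beta)=(M\cap\lambda,G\uhr[M\cap\lambda]^n)$ and $\exp_{n-1}(\kappa)\subseteq M$ get used); but that observation by itself does not propagate to $k\geq 2$, and that propagation is exactly what the pigeonhole argument supplies.
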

\begin{proof} Given $\xi_{n-1}=\beta$ and $i_{n-1}=G_{n-1}(\beta)$, observe that $G\in M,\ran(G_{n-1})\subseteq M$ implies that $$\Lambda=\{\xi\in S^\lambda_{\kappa^+}:G_{n-1}(\xi)=G_{n-1}(\beta)\}\in M$$ as well. Hence $|\Lambda|\geq (\exp_{n-1}(\kappa))^+$ (as $\beta\in \Lambda\setm M$) and so $|\Lambda\cap M|=\exp_{n-1}(\kappa)$. In turn, $cf(\exp_{n-1}(\kappa))>\exp_{n-2}(\kappa)$ implies that there is an $i_{n-2}\in \mc P^{n-2}(\kappa)$ so that
\begin{equation}\label{rholine}
|\{\xi\in \Lambda\cap M: G_{n-2}(\xi,\xi_{n-1})=i_{n-2}\}|\geq \exp_{n-2}(\kappa).
\end{equation}
In particular, $i_{n-2}\in i_1=G_{n-1}(\xi_{n-1})$. Now pick any $\xi_{n-2}\in \Lambda\cap M$ with $G_{n-2}(\xi_{n-2},\xi_{n-1})=i_{n-2}$. 

Suppose we found $\xi_{n-k}$ and $i_{n-k}$ as required. By assumption, $i_{n-k}\in i_{n-k+1}=G_{n-k+1}(\xi_{n-k}\dots \xi_{n-2})$. Hence $$|\{\xi<\xi_{n-k}:G_{n-k}(\xi,\xi_{n-k}\dots \xi_{n-2})=i_{n-k}\}|\geq \exp_{k}(\kappa)$$ and so 
$$|\{\xi\in M\cap \xi_{n-k}:G_{n-k}(\xi,\xi_{n-k}\dots \xi_{n-2})=i_{n-k}\}|\geq \exp_{n-k}(\kappa)$$  holds as well. By cofinality considerations, there is $i_{n-k-1}\in \mc P^{n-k-1}(\kappa)$ so that $$|\{\xi\in M\cap \xi_{n-k}:G_{n-k}(\xi,\xi_{n-k}\dots \xi_{n-2})=i_{n-k},G_{n-k-1}(\xi,\xi_{n-k}\dots \xi_{n-1})=i_{n-k-1}\}|\geq \exp_{n-k}(\kappa).$$ 

Note that this implies that $i_{n-k-1}\in i_{n-k}=G_{n-k}(\xi_{n-k}\dots \xi_{n-1})$ and we can pick any $\xi_{n-k-1} \in M\cap \xi_{n-k}$ so that $G_{n-k}(\xi_{n-k-1},\xi_{n-k}\dots \xi_{n-2})=i_{n-k},G_{n-k-1}(\xi_{n-k-1},\xi_{n-k}\dots \xi_{n-1})=i_{n-k-1}$. Hence conditions (a) and (b) above are satisfied.
\end{proof}

At last we get $\xi_{0}\in A_\beta\cap \xi_{1}$ and $i_0\in \kappa$ so that 
\begin{enumerate}[(a)]
\item $G_{1}(\xi_{0}\dots\xi_{n-2})=i_{1}=G_{1}(\xi_{1}\dots\xi_{n-1})$, and
\item $i_{0}=G_{0}(\xi_{0}\dots\xi_{n-1})\in i_{1}$.
\end{enumerate}

We let $R=\{\xi_0\dots\xi_{n-2}\}\in [A_\beta]^{n-1}$ and look at the construction of the orientation $D$ when we considered $\beta$ and $R$. In particular, we consider the step when the colour $i_0\in \kappa$ came up.

If the assumption of Case 1 was satisfied then we constructed a copy of $\arr{C}_4$ on vertices $\alpha_0 R,R\beta,\alpha_1 R,R\alpha_3$ and $g_\beta$ is constant $i_0$ on $\alpha_0 R,\alpha_1 R,R\alpha_3$; $g_\beta$ and $G$ agree on $A_\beta$ and $G(R\beta)=i_0$ so this is the desired monochromatic  $\arr{C}_4$.

Now, we suppose that Case 1 fails and reach a contradiction. We claim that the assumption of Case 2 is satisfied. Indeed, $i_0\in i_1=G_{1}(\xi_{0}\dots\xi_{n-2})$ implies that there are $\kappa$ many $\xi<\xi_0$ with the property that $G(\xi,\xi_0\dots\xi_{n-2})=i_0$. Also, $M$ is $\kappa$-closed so $\bigcup\{a_{\beta,R,j}:j<i\}\in M$. Hence, using elementarity, we can select $\alpha_0<\alpha_1\in M\cap \xi_0\setm \bigcup\{a_{\beta,R,j}:j<i_0\}$ so that $$G(\alpha_0,\xi_0\dots\xi_{n-2})=G(\alpha_1,\xi_0\dots\xi_{n-2})=i_0.$$

Hence, following the instructions in Case 2, we oriented $\alpha_0 R\beta$ and $\alpha_1 R\beta$ differently. In turn, the set $$B=\{\xi\in \lambda\setm \max(R): \text{ the orientation of }\alpha_0 R\xi,  \alpha_1 R\xi \text{ are different and }G(R \xi)=i_0\}$$

is not {\DD empty}. However, $B\in M$ so we can choose $\alpha_3\in B\cap M$ and hence $\alpha_1,\alpha_2,\alpha_3$ witnesses that Case 1 holds. This contradicts our assumption and ends the proof.

\end{proof}

Now, we prove that consistently \textit{any graph} with size and chromatic number $\omg$ has uncountable dichromatic number as well in a very strong sense. 

Recall that $\diamondsuit^+$ asserts the existence of sets $\mc S_\beta=\{S_{\beta,n}:n\in\omega\}$ where $\beta\in \omg$ so that for every $X\subseteq \omg$ there is a club $C\subseteq \omg$ such that $X\cap \beta= S_{\beta,n}$ for some $n\in\oo$ whenever $\beta\in C$. 

\begin{theorem}\label{posrel} Assume that $\diamondsuit^+$ holds and the graph $G$ has size and chromatic number $\omg$. Then  $$\ENLL{G}{\bigwedge \{D:D\text { is an orientation of }\half\}}.$$ 
\end{theorem}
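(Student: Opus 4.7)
The plan is to construct the orientation $D$ of $G$ by transfinite recursion on $\beta<\omg$, using the $\diamondsuit^+$-sequence $(\mc S_\beta)$ to anticipate uncountably-chromatic witnesses $W\subseteq V(G)$. After identifying $V(G)$ with $\omg$, the target is the following \emph{key invariant} (*): for every such $W$, every finite $F\subseteq W$ whose common $G$-neighborhood in $W$ is uncountable, and every pattern $\tau:F\to 2$, the set of $v\in W$ that are $G$-adjacent to every $f\in F$ with $D$-orientation to $F$ matching $\tau$ is infinite. Granted (*), the theorem follows by a greedy argument: given $W$ with $\chr{G[W]}=\omg$ and any orientation $\sigma$ of $\half$, the proof of Proposition~\ref{halfembed} (the Hajnal--Komj\'ath embedding) actually produces a scaffold half-graph in $G[W]$ whose $a$-side $(a_k)_{k<\oo}$ has the property that each initial segment $\{a_0,\dots,a_k\}$ has uncountable common $G$-neighborhood in $W$; setting $\tau_k(a_i)=\sigma(i,k)$ for $i\leq k$ and applying (*) repeatedly, I extract $b_k\in W$ in the infinite $\tau_k$-neighborhood of $\{a_0,\dots,a_k\}$, distinct from the finitely many previously used vertices. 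The sequences $(a_k),(b_k)$ then embed the $\sigma$-oriented $\half$ into $D[W]$, and this yields the conjunction over all orientations.

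For the construction itself, I fix a continuous $\in$-chain $(M_\xi)_{\xi<\omg}$ of countable elementary submodels of $H(\theta)$ containing $G$ and $(\mc S_\beta)$, with $V(G)\cap M_\xi=\delta_\xi:=M_\xi\cap \omg$, and a bookkeeping surjection $\pi:\omg\to\oo\times\oo$ with every fibre stationary. At stage $\beta$ with $\pi(\beta)=(n,k)$, I consult $S:=S_{\beta,n}$, canonically enumerate the pairs $(F,\tau)$ with $F\in[S]^{<\oo}$ and $\tau:F\to 2$ as $(F^S_i,\tau^S_i)_{i<\oo}$, and let $(F_\beta,\tau_\beta)=(F^S_k,\tau^S_k)$. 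If $v_\beta$ is $G$-adjacent to every $f\in F_\beta$, I orient each edge $v_\beta v_\alpha$ with $v_\alpha\in F_\beta$ to agree with $\tau_\beta(v_\alpha)$, and orient all remaining edges at $v_\beta$ freely. The role of $\diamondsuit^+$ is to pump signals about potential witnesses $W$ into the guesses $\mc S_\beta$, while $\pi$ distributes the countably many per-stage tasks across $\omg$ in a uniform way.

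To verify (*), suppose it fails for some $(W,F,\tau)$ with $|N^G(F)\cap W|=\omg$. Applying $\diamondsuit^+$ to $W$ yields a club $C_W$ with $W\cap\beta\in\mc S_\beta$ for every $\beta\in C_W$; elementarity of $(M_\xi)$ stabilizes along a smaller club both the index of $W\cap\beta$ within $\mc S_\beta$ and the canonical index of $(F,\tau)$ among pairs on $W\cap\beta$, while stationarity of the appropriate fibre of $\pi$ selects the right stages. Combining these, I aim to exhibit uncountably many $\beta\in W\cap N^G(F)\cap C_W$ at which the construction forces $v_\beta$ to be a $\tau$-neighbor of $F$, contradicting the finiteness of $N^\tau(F)\cap W$. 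The main obstacle is precisely this simultaneous alignment: since $W$ need not be stationary, the $\diamondsuit^+$-club $C_W$ may a priori meet $W$ only countably, and the fibre of $\pi$ controlling the bookkeeping intersects arbitrary uncountable sets poorly. Overcoming this requires exploiting the uncountability of $N^G(F)\cap W$ — which is the actual content of the proof of Proposition~\ref{halfembed}, not just its conclusion — together with a reflection through the continuous elementary chain $(M_\xi)$ that effectively replaces $W$ by a stationary trace whose intersections with club and stationary sets remain uncountable.
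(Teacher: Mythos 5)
Your proposal has a genuine structural gap in the construction, which the verification sketch acknowledges but does not repair.

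The issue is the coupling of the $\diamondsuit^+$-guess to the vertex being oriented. In your construction, at stage $\beta$ you consult $S_{\beta,n}$ where $\pi(\beta)=(n,k)$, and you orient edges \emph{from $v_\beta$ itself} to a single finite $F_\beta\subseteq S_{\beta,n}$. So each stage contributes at most one new $\tau$-neighbor, and only when $\beta$ lands exactly on the right $\diamondsuit^+$ stage \emph{and} has the right $\pi$-fiber \emph{and} is itself a common $G$-neighbor of $F$. To verify (*) for a given $(W,F,\tau)$ you therefore need infinitely many $\beta\in C_W\cap W\cap N_G(F)$ with $\pi(\beta)$ hitting the right pair $(n^*,k^*)$. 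But $C_W$ is a club and $W\cap N_G(F)$ is merely uncountable; there is no a priori reason for them to intersect at all (e.g.\ take a club of limit ordinals against a set of successors), let alone stationarily. This is exactly the "simultaneous alignment" obstacle you name at the end, and the suggested fix --- "replacing $W$ by a stationary trace" --- is not something your construction can use, because it only ever consults the guesses $\mc S_\beta$ at the single ordinal $\beta$ currently being oriented.

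The paper's proof avoids this by \emph{decoupling} the $\diamondsuit^+$ index from the active vertex. At stage $\beta$ it processes, in a countable side-loop over $j<\omega$, \emph{all} triples $(\delta,n,\xi)$ with $\delta,\xi\leq\beta$ such that $S_{\delta,n}$ has infinitely many $G$-neighbors of $\beta$, building maximal partial embeddings of the $\xi$-th orientation of $\half$ into $D^*_\beta[S_{\delta,n}]$ and extending one of them by using $\beta$ as the next $b$-side vertex. In the verification one then picks $\delta=M_\alpha\cap\omega_1$ on the $\diamondsuit^+$-club for $A$ (so $A\cap\delta=S_{\delta,n}$), and separately finds a vertex $\beta\in A\setminus\delta$ with infinite back-degree into $A\cap\delta$ --- such a $\beta$ exists by uncountable chromatic number, with no requirement that $\beta$ be on any club. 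Maximality of the recorded partial embedding is then contradicted via elementarity of $M_\alpha$. So the two roles you merged into the single ordinal $\beta$ are played by two different ordinals $\delta<\beta$, and the external bookkeeping $\pi$ is unnecessary because every relevant guess below $\beta$ is consulted at stage $\beta$. Also worth noting: the paper records partial embeddings of each orientation $D_\xi$ of $\half$ directly, which automatically takes care of finding the $b$-side vertices distinct and in $W$, whereas your invariant (*) plus greedy argument is a plausible but additional layer that would need its own justification (in particular, you would need to check that the Hajnal--Komj\'ath construction really yields an $a$-side whose every initial segment has uncountable common neighborhood \emph{inside $W$}, and that your construction can satisfy (*) for all $2^{|F|}$ patterns simultaneously, which is not automatic from cardinality considerations alone). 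As written, the proposal does not constitute a proof.
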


In other words, there is an orientation $D^*$ of $G$ so that whenever $G[A]$ is uncountably chromatic and $D$ is an orientation of $\half$ then $D$ embeds into $D^*[A]$.

\begin{proof} Let $\mc S_\beta=\{S_{\beta,n}:n\in\omega\}$ be the $\diamondsuit^+$ sequence and let $\{D_\beta:\beta<\omg\}$ list all orientations of $H_{\omega,\omega}$. We suppose that each $D_\beta$ has vertices $2\times \omega$. 

 By induction on $\beta$ we orient the edges of $G$ from $\beta$ to $\beta\cap N(\beta)$ to define an orientation $D^*_{\beta+1}$ of $G[\beta+1]$. 

Let $$\{(\delta_j,n_j,\xi_j):j<\omega\}$$ list  $\Gamma_\beta=\{(\delta,n,\xi):\delta,\xi\leq\beta,n<\omega,|N(\beta)\cap S_{\delta,n}|=\omega\}$. 

 Suppose that the orientation $D^*_\beta$ of $G[\beta]$ is defined already. By induction on $j<\oo$, we will select $a_j\in [N(\beta)\cap S_{\delta_j,n_j}]^{<\omega}$ such that $a_j\cap a_{j'}=\emptyset$ for $j<j'<\omega$ and orient the edges between $\beta$ and $a_j$ as follows.


At step $j$, we look at the set $\Phi_j$ of partial digraph embeddings $\varphi$ of $D_{\xi_j}$ into $D^*_\beta[S_{\delta_j,n_j}]$ such that $\dom(\varphi)=2\times k$ for some $k\leq\omega$ and  $$\varphi[\{0\}\times k]\subseteq  S_{\delta_j,n_j}\cap N(\beta)\setm \bigcup_{j'<j}a_j.$$ Note that $\Phi_j$ might only contain $\emptyset$. In any case, take a $\varphi_j\in \Phi_j$ which is maximal with respect to inclusion.

\textbf{Case 1}: If $\varphi_j$ is a complete embedding  then let $a_j=\emptyset$ and move to step $j+1$ in the induction. 


\textbf{Case 2:} if Case 1 fails then there is a $k<\omega$ such that $\dom(\varphi_j)=2\times k$ i.e. $\varphi_j$ is a digraph embedding of $D_{\xi_j}[2\times k]$. Let $a_j=\varphi_j[\{0\}\times k]\cup \{\alpha\}$ for some $$\alpha\in S_{\delta_j,n_j}\cap N(\beta)\setm (\bigcup_{j'<j}a_j\cup \varphi_j[\{0\}\times k]).$$ Lets define the orientation between $\beta$ and $a_j$ so that $$\varphi^*_j=\varphi_j\cup \{((0,k),\alpha),((1,k),\beta)\}$$ is a digraph embedding of $D_{\xi_j}[2\times (k+1)]$.

Edges from $\beta$ to $(N(\beta)\cap\beta)\setm \bigcup\{a_j:j<\omega\}$ are oriented arbitrarily. This finishes the definition of the orientation $D^*=\bigcup_{\beta<\omg} D^*_\beta$ of $G$.


Now take any $A$ such that $G[A]$ is uncountably chromatic and an orientation $D$ of $\half$. There is a club of elementary submodels $\{M_\alpha:\alpha<\omega_1\}$ of $H(\aleph_2)$ so that $D,A\in M_\alpha$ and whenever $\delta=M_\alpha\cap \omg$ for some $\alpha<\omg$ then $A\cap \delta=S_{\delta,n}$ for some $n\in \omega$.

 As $G[A]$ is uncountably chromatic, there is $\delta=M_\alpha\cap \omg$ for some $\alpha<\omg$ and $\beta\in A\setm \delta$ such that $N(\beta)\cap A\cap \delta$ is infinite. We can suppose that there is a $\xi\leq \beta$ so that $D=D_\xi$. Let $n<\omega$ such that $A\cap \delta=S_{\delta,n}$; so $N(\beta)\cap S_{\delta,n}$ is infinite. We claim that there is a copy of $D$ in $D^*[S_{\delta,n}]\subseteq D^*[A]$.


Let us look at how the orientation was defined between $\beta$ and $N(\beta)\cap \beta$. There is a $j<\omega$ such that $(\delta_j,n_j,\xi_j)=(\delta,n,\xi)$. 

At step $j$, we selected a maximal partial embedding $\varphi_j$ of $D$ into $D^*_\beta[S_{\delta,n}]$. If Case 1 held for $\varphi_j$ then $\varphi_j$ is a complete embedding witnessing the existence of a copy of $D$ in $D^*[S_{\delta,n}]$.

 Otherwise, we are in Case 2: $\varphi_j$ is an embedding of $D[2\times k]$ for some $k<\omega$. This case, we extended $\varphi_j$ into a strictly larger embedding $\varphi^*_j$ with $$\ran(\varphi^*_j)\subseteq S_{\delta_j,n_j}\cup \{\beta\}\subseteq A.$$ Note that $\varphi_j$ and $A$ are both in $M_\alpha$ and $$H(\aleph_2)\models  \varphi_j \text{ can be extended to an embedding of }D[2\times (k+1)] \text{ into } A\setm \bigcup_{j'<j}a_j.$$ Hence this sentence must be true in $M_\alpha$ as well, that is, there is an embedding $\varphi\in M_\alpha$ of $D[2\times (k+1)]$ into $A\setm \bigcup_{j'<j}a_j$  extending $\varphi_j$. Of course the range of $\varphi$ now has to be in $A\cap M_\alpha=S_{\delta,n}$ which contradicts the maximality of $\varphi_j$.

\end{proof}

\begin{cor} If $\diamondsuit^+$ holds and $G$ has size $\omg$ then  $\dchr{G}=\omega_1$ if and only if $\chr G=\omg$.
\end{cor}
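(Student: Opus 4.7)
The plan is to derive this corollary directly from Theorem \ref{posrel}, which already does the heavy $\diamondsuit^+$ work. The trivial direction is: for any orientation $D$ of $G$, every independent partition of $V(G)$ is also an acyclic partition of $D$, so $\chr G\geq \chr D$ for every orientation, hence $\chr G\geq \dchr G$. Since $|G|=\omg$ forces $\chr G\leq \omg$, the equation $\dchr G=\omg$ forces $\chr G=\omg$.

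For the nontrivial direction, assume $\chr G=\omg$ and apply Theorem \ref{posrel} to obtain a single orientation $D^*$ of $G$ with the property that every orientation of $\half$ embeds into $D^*[A]$ whenever $\chr{G[A]}>\oo$. The plan is to show $\chr{D^*}>\oo$, which, together with $|D^*|=\omg$, gives $\chr{D^*}=\omg$ and hence $\dchr G=\omg$.

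Suppose toward contradiction that $\chr{D^*}\leq \oo$ and write $V(G)=\bigcup_{n<\oo}A_n$ with each $A_n$ acyclic in $D^*$. From $\chr G=\omg$ it follows that some $A_{n}$ must satisfy $\chr{G[A_n]}>\oo$ (otherwise we could paste together countably many countable colourings into a countable colouring of $G$). Now fix any orientation $D$ of $\half$ which contains a directed cycle; such an orientation is easy to build explicitly, since $(0,0),(1,1),(1,0),(2,1)$ spans a $4$-cycle in $\half$ whose edges can be oriented as a directed $\arr{C}_4$ and the remaining edges of $\half$ can be oriented arbitrarily. Applying Theorem \ref{posrel} to this $D$ and $A=A_n$, we conclude that $D\hookrightarrow D^*[A_n]$; the image of the directed cycle in $D$ is then a directed cycle inside $D^*[A_n]$, contradicting the choice of $A_n$ as acyclic in $D^*$.

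The argument is essentially routine once Theorem \ref{posrel} is in hand; the only point to check carefully is that at least one orientation of $\half$ admits a directed cycle, which reduces to finding a cycle in $\half$ at all — and the existence of a $4$-cycle in $\half$ is immediate from its definition. No further set-theoretic machinery is needed here beyond what Theorem \ref{posrel} already invokes.
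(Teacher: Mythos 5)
Your proof is correct and is essentially the argument the paper leaves implicit: the ``only if'' direction is the routine inequality $\dchr G\leq\chr G\leq\omg$, and the ``if'' direction applies Theorem~\ref{posrel} with an orientation of $\half$ carrying a directed $4$-cycle, so that any countable acyclic decomposition of $D^*$ would have to contain an uncountably chromatic piece and hence a directed cycle. No discrepancy with the paper's route.
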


\section{On the lack of orientations with large chromatic number} \label{negrelsec}

{\DD In this final section, we show that consistently there is a graph $G$ with uncountable chromatic number without an orientation with uncountable dichromatic number. In other words, $\chr G>\oo$ does not imply $\dchr D>\oo$.}

In \cite{simchrom}, Hajnal and Komj\'ath study an intriguing problem which can be roughly stated as follows: given a graph $G$ with uncountable chromatic number, can we colour the edges of $G$ with 2 (alternatively, $\oo$ or $\omg$) colours so that each colour appears on each \emph{large enough} subgraph.

Let us observe some straightforward connections to our investigations: 

 \begin{obs} \label{simchromobs}\begin{enumerate}
	 \item If $\ENLL{G}{\bigvee\{\arr{C}_n:3\leq n<\oo\}}$ then we can define a 2-edge colouring of $G$ with the property that every colour appears on every uncountably chromatic induced subgraph.
\item If  $\dchr{G}>\oo$ then there is a 2-colouring of the edges of $G$ such that whenever the vertices $V(G)$ are partitioned into countably many pieces $\{V_i:i<\omega\}$ then both colours appear on one of the spanned subgraphs $G[V_i]$ 
 \end{enumerate} 
\end{obs}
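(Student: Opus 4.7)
Both parts rest on a single elementary observation: once a linear order on the vertex set of a digraph is fixed, every directed cycle must use at least one ``forward'' edge (from smaller to larger) and at least one ``backward'' edge (from larger to smaller) with respect to that order, since no strictly monotone sequence can close up into a cycle.

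With this in mind, my plan is to treat the two parts uniformly. First I would fix a well-order $\prec$ on $V(G)$ together with the orientation $D$ supplied by the hypothesis---the orientation witnessing $\ENLL{G}{\bigvee\{\arr{C}_n:3\leq n<\oo\}}$ in part (1), and any orientation realising $\chr D > \oo$ in part (2). Then I would 2-colour each edge $uv$ of $G$ (with $u \prec v$) by red if $uv \in E(D)$ and blue if $vu \in E(D)$; in words, edges are coloured according to whether the $D$-orientation respects $\prec$ or reverses it.

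For (1), given any $W \subseteq V(G)$ with $\chr{G[W]} = \chr G$, the definition of $\ENLL{}{\cdot}$ provides some $\arr{C}_n$ ($n \geq 3$) inside $D[W]$, and by the opening observation this directed cycle must carry at least one edge of each colour, so both colours appear on $G[W]$. For (2), given any countable partition $V(G) = \bigcup_{i<\omega} V_i$, I would view the partition as a countable vertex colouring of $D$; since $\chr D > \omega$, some part $V_i$ fails to be acyclic in $D$, and a directed cycle inside $D[V_i]$ again forces edges of both colours to appear in $G[V_i]$.

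The proof is essentially bookkeeping around the order/cycle principle above, so no step presents a real difficulty; the one point I would be careful with is the reading of ``uncountably chromatic induced subgraph'' in part (1), which must be interpreted as $\chr{G[W]} = \chr G$ in order to match the definition of $\ENLL{}{\cdot}$---and this is exactly ``uncountably chromatic'' in the principal case $\chr G = \omg$.
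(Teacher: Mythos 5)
Your proposal is correct and takes essentially the same route as the paper: both fix a well-order on $V(G)$, colour each edge according to whether the given orientation respects or reverses that order, and then note that a directed cycle (supplied by $\ENLL{}{\cdot}$ for part (1) and by $\chr D>\oo$ for part (2)) must carry an edge of each colour---the paper phrases this last step by looking at $\max C$ and its two cycle-neighbours, which is just a pointed form of your "a directed cycle cannot be monotone" observation. Your caveat about reading "uncountably chromatic" as $\chr{G[W]}=\chr G$ is well taken and is the intended reading in the paper's context $\chr G=\omg$.
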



\begin{proof} Let us prove (1) and leave the completely analogous proof of (2) to the reader. Given the orientation $D$ of $G=(\lambda,E)$ witnessing $\ENLL{G}{\bigvee\{\arr{C}_n:3\leq n<\oo\}}$ define $f(ab)=0$ if $a<b\in \lambda$ and $\arr{ab}\in E(D)$, otherwise $f(ab)=1$. Now, if $\chr{G[W]}>\oo$ then there is a directed cycle $C$ in $D[W]$.  Let $b=\max C$ (where $C$ is considered as a set of ordinals in $\lambda$). If $a$ and $a'$ are the neighbours of $b$ in $C$ then we must have $f(ab)\neq f(a'b)$. 
\end{proof}


It was shown in \cite[Theorem 5]{simchrom} that the consequence stated in Observation \ref{simchromobs} (1) can consistently fail for a graph of size and chromatic number $\omega_1$. Hence, we get the following:

\begin{cor} Consistently, there is a graph $G$ with size and chromatic number $\omega_1$ such that $\ENLL{G}{\bigvee\{\arr{C}_n:3\leq n<\oo\}}$ fails.
\end{cor}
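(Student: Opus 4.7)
The plan is to derive the corollary by a direct contrapositive application of Observation \ref{simchromobs}~(1), using as a black box the consistency result \cite[Theorem 5]{simchrom}. Recall that \cite[Theorem 5]{simchrom} provides (consistently with ZFC) a graph $G$ with $|G|=\chr G=\omg$ with the following property: for every edge $2$-colouring $f:E(G)\to 2$, there is an induced subgraph $G[W]$ with $\chr{G[W]}=\omg$ which is $f$-monochromatic, i.e.\ only one of the two $f$-colours appears on $E(G[W])$.

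With this graph $G$ in hand, I would verify the corollary as follows. Suppose toward a contradiction that $G$ admits an orientation $D$ witnessing $\ENLL{G}{\bigvee\{\arr{C}_n:3\leq n<\oo\}}$. Then Observation \ref{simchromobs}~(1), applied to this $D$, yields a $2$-edge-colouring $f^*:E(G)\to 2$ with the property that \emph{both} colours appear on $E(G[W])$ whenever $\chr{G[W]}=\omg$. This directly contradicts the defining property of $G$ supplied by \cite[Theorem 5]{simchrom}: the colouring $f^*$ admits no uncountably chromatic monochromatic induced subgraph. Hence no such orientation $D$ exists, i.e., $\ENLL{G}{\bigvee\{\arr{C}_n:3\leq n<\oo\}}$ fails for $G$, and since $G$ also satisfies $|G|=\chr G=\omg$, it is the desired witness.

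The main obstacle in the overall argument has already been absorbed by \cite{simchrom}: the nontrivial consistency content is packed into the construction (via ccc forcing) of a graph on $\omg$ whose edge sets cannot be split by any $2$-colouring in a way that is balanced on every uncountably chromatic induced subgraph. From the point of view of this paper, the corollary is essentially a translation step — the only thing that needs to be checked, and which is exactly the content of Observation \ref{simchromobs}~(1), is that an orientation witnessing $\ENLL{G}{\bigvee\{\arr{C}_n:3\leq n<\oo\}}$ canonically produces the ``balanced'' $2$-edge-colouring that the graph of \cite[Theorem 5]{simchrom} refuses to admit. Consequently, the proof itself is short and contains no additional combinatorial or set-theoretic complications beyond citing these two facts.
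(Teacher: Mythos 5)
Your proposal is correct and is exactly the argument the paper gives: cite \cite[Theorem 5]{simchrom} for a consistent example of a graph of size and chromatic number $\omg$ on which the conclusion of Observation \ref{simchromobs}~(1) fails, then read Observation \ref{simchromobs}~(1) contrapositively to conclude that no orientation of that graph can witness $\ENLL{G}{\bigvee\{\arr{C}_n:3\leq n<\oo\}}$.
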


However, it is unknown if there is a graph $G$ of chromatic number $\omg$ which fails the consequence stated in Observation \ref{simchromobs} (2) (even consistently). Hence it is rather interesting that $\dchr{G}\leq \oo$ is possible for a graph $G$ with uncountable chromatic number:
{\DD
\begin{theorem}\label{negrel} Consistently, there is a graph $G$ on vertex set $\omg$ such that
\begin{enumerate}
 \item $\dchr{G}\leq \omega$, however
\item $C_3\hookrightarrow G[X]$ for every uncountable $X\subseteq \omg$, and so $\chr G=\omg$.
\end{enumerate}


\end{theorem}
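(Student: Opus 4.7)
I would prove Theorem \ref{negrel} by a ccc forcing argument over a ground model of CH, in the same spirit as the finite-condition forcings used in Theorems \ref{digirth} and \ref{noarrowcon}, and drawing on the Hajnal--Komj\'ath edge-coloring construction from \cite[Thm 5]{simchrom}. The graph $G$ on $\omg$ is produced by a ccc poset $\mathbb{P}$ whose conditions carry, besides a finite partial graph, auxiliary countable data designed to certify for each orientation of $G$ in the extension a countable acyclic partition. Specifically, I would take conditions of the form $p = (V^p, E^p, \vec c^{\,p})$ with $V^p \in [\omg]^{<\omega}$, $E^p \subseteq [V^p]^2$, and $\vec c^{\,p}=(c^p_n)_{n<n^p}$ a finite sequence of auxiliary colorings $c^p_n:V^p\to\omega$; the ordering is the natural coordinatewise extension that does not disturb edges or non-edges on the previous vertex set and coherently extends each $c_n$.

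The ccc of $\mathbb P$ should follow from the standard $\Delta$-system plus twin amalgamation argument (as in Claims \ref{cccclaim} and \ref{ccc0}): from any uncountable set of conditions extract uncountably many pairwise twins that are order-isomorphic and carry matching graph and coloring structures via the isomorphism $\psi_{p,q}$, then amalgamate two twins into a common extension. Condition (2) of the theorem is then a density argument in the style of Claim \ref{amalgclaim}: for an uncountable name $\dot X$, choose conditions $p_\alpha$ with $\alpha \in V^{p_\alpha}$ forcing $\alpha \in \dot X$, thin to pairwise twins by the $\Delta$-system lemma, pick three of them $p_{\alpha_0},p_{\alpha_1},p_{\alpha_2}$, and amalgamate while inserting the three edges $\alpha_0\alpha_1$, $\alpha_1\alpha_2$, $\alpha_0\alpha_2$ so that $\{\alpha_0,\alpha_1,\alpha_2\}$ spans a triangle in $\dot G[\dot X]$.

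The delicate step is (1): every orientation $\dot D$ of $\dot G$ in the extension has $\chr{\dot D}\le\omega$. The generic sequence $(c_n)_{n<\omega}$ is meant to furnish, for every such $\dot D$, a countable acyclic partition of $V(\dot G)$. The design of $\vec c$ must navigate two opposing constraints: it cannot force the color classes of any single $c_n$ to be acyclic in every orientation, since this would force each $\dot G[c_n^{-1}(i)]$ to be a forest and hence $\chr{\dot G}\leq\omega$, contradicting (2); yet collectively the family must handle every orientation. I expect the correct setup assigns to each $c_n$ a candidate partition together with a local witness structure, so that for every finite subgraph and every finite orientation of it, some $c_m$ gives an acyclic refinement, and this property is preserved under extension. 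Supposing (1) fails, say $p^*\Vdash \chr{\dot D}>\omega$, one extracts by Proposition \ref{halfembed} an uncountable family of witnesses for $\arr{\half}\hookrightarrow\dot D$, $\Delta$-systems them to twins, and amalgamates to produce a configuration incompatible with the structure forced by $\vec c$, yielding the desired contradiction.

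The main obstacle is precisely this engineering of the auxiliary colorings $\vec c$: they must be generic enough to intercept every orientation of $\dot G$ in the extension (not only those with ground model names), restrictive enough to preclude uncountable dichromaticity, yet not so restrictive as to damage the triangle-richness required by (2). Once this design is in place, the ccc argument (twin amalgamation), (2) (density), and (1) (contradiction via $\Delta$-systems plus $\arr{\half}$-embedding) all proceed along paths well-trodden in this paper.
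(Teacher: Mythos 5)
Your proposal leaves the crucial step (1) as an acknowledged gap, and in fact the structural choice you made --- a single ccc poset $\mathbb P$ whose finite conditions carry a finite stack $(c^p_n)_{n<n^p}$ of auxiliary colorings --- is where the approach breaks down. A single forcing step produces one generic graph $\dot G$ together with countably many colorings $(c_n)_{n<\omega}$ of $\omega_1$. But $\dot G$ has (at least) $2^{\omega}$-many orientations in the extension, and the vast majority of them are not coded by ground-model names or by any finitary data that your conditions could anticipate: an orientation of $\dot G$ is an $\omega_1$-sized object that picks one of two directions for each of $\omega_1$-many edges, and there is no way a single countable family of colorings, decided alongside the edges, can furnish a countable acyclic partition for \emph{each} of them. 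The local closure property you suggest (``for every finite subgraph and every finite orientation of it, some $c_m$ gives an acyclic refinement'') is far too weak: it does not assemble into a single countable cover by acyclic sets for a fixed global orientation, because the ``right'' $c_m$ would have to vary with the finite piece. You noticed this yourself and described it as ``the main obstacle,'' but without a resolution the proof does not go through.

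The paper sidesteps exactly this by trading your single-step design for a finite support iteration $(\mathbb P_\alpha,\dot{\mathbb Q}_\beta)_{\alpha\le\omega_2,\beta<\omega_2}$ over a model of CH: $\mathbb P_0$ adds the Cohen-generic graph $\dot G$ on $\omega_1$, and at each subsequent coordinate $\alpha$ a bookkeeping device hands us a $\mathbb P_\alpha$-name $\dot D_\alpha$ for an orientation of $\dot G$, and $\dot{\mathbb Q}_\alpha$ consists of finite partial functions $q\in Fn(\omega_1,\omega)$ with each $\dot D_\alpha[q^{-1}(n)]$ acyclic. This ``specializes'' the orientations one at a time; the ccc of the whole iteration and a nice-name argument ensure that every orientation arising in $V^{\mathbb P_{\omega_2}}$ is caught at some stage $\alpha<\omega_2$. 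The part your proposal wholly omits is the compatibility analysis along the iteration: the paper proves (Claims~\ref{nodirpath} and~\ref{amalg}) that for twin determined conditions $p,p'$ and $\delta\in s^p\setminus s^{p'}$, one can add the single edge $\{\delta,\psi_{p,p'}(\delta)\}$ to $p\vee p'$ without creating, at any coordinate $\xi$, a monochromatic directed cycle for the coloring $(p\vee p')(\xi)$ --- this is where Lemma~\ref{amalglemma}(1) (no short directed path between twin images of a vertex) is indispensable. That is precisely what lets you keep inserting triangles into arbitrary uncountable $\dot X$ (your item (2)) while simultaneously adding $\omega$-specializing functions for every orientation (your item (1)). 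Without separating these tasks across coordinates of an iteration, and without this no-directed-path lemma at each coordinate, the two constraints you identified as ``opposing'' really do collide, and the proof stalls exactly where your write-up admits it does.
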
}

{\DD At this point, we don't know if the implication in  Observation \ref{simchromobs} (2) can or cannot be reversed.}

\begin{proof} We start from a model $V$ of CH. 

Let $\mb P_0=\{(s,g):s\in [\omg]^{<\omega},g\subseteq [s]^2\}$ with the usual ordering.

	Given a generic filter $\mc G\subseteq \mb P_0$ we get a graph $\dot G=\{g^p:p\in \mc G\}$ in the extension $V^{\mb P_0}$. 

We will not use this particular fact, but let us mention that $\dot G$ has uncountable dichromatic number in $V^{\mb P_0}$:

\begin{claim}
 $V^{\mb P_0}\models \ENLL{\dot G}{\arr{C_3}}$.
\end{claim}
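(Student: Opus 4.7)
The plan is to orient $\dot G$ in $V^{\mb P_0}$ by restricting a ground-model orientation of $K_\omg$ supplied by Theorem \ref{Komgprop}(1), then verify the $\ENLL$-property via a standard ccc density argument driven by the $\Delta$-system lemma.

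Working in $V$ and using the ZFC partition relation $\omg\nrightarrow[\omg;\omg]^2_2$, Theorem \ref{Komgprop}(1) supplies an orientation $D^*$ of $K_\omg$ with $\arr{C_3}\hookrightarrow D^*[Y]$ for every uncountable $Y\subseteq\omg$. In $V^{\mb P_0}$, let $\dot D$ be the induced orientation of $\dot G$: declare $\arr{\alpha\beta}\in E(\dot D)$ whenever $\{\alpha,\beta\}\in E(\dot G)$ and $\arr{\alpha\beta}\in E(D^*)$. A routine density argument yields $\chr{\dot G}=\omg$ in $V^{\mb P_0}$, so to prove $\ENLL{\dot G}{\arr{C_3}}$ it is enough to show that for every name $\dot W$ and every $p\in\mb P_0$ forcing $\chr{\dot G[\dot W]}=\omg$, the set of extensions of $p$ forcing $\arr{C_3}\hookrightarrow\dot D[\dot W]$ is dense below $p$.

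Fix such $p$ and $\dot W$. Using that $p\force|\dot W|=\omg$, choose in $V$ conditions $p_\alpha\leq p$ with $\alpha\in s^{p_\alpha}$ and $p_\alpha\force\alpha\in\dot W$ for $\alpha$ in some uncountable $Y\subseteq\omg$. By the $\Delta$-system lemma together with standard thinning, extract an uncountable $Y'\subseteq Y$ such that $\{s^{p_\alpha}:\alpha\in Y'\}$ forms a $\Delta$-system with root $r$, the sets $s^{p_\alpha}\setm r$ are pairwise disjoint, $\alpha\in s^{p_\alpha}\setm r$, and the $p_\alpha$ are pairwise isomorphic via the canonical order bijection fixing $r$. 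Apply Theorem \ref{Komgprop}(1) in $V$ to the uncountable set $Y'$ to obtain $\alpha,\beta,\gamma\in Y'$ with $D^*[\{\alpha,\beta,\gamma\}]$ a directed triangle, and set $s^q=s^{p_\alpha}\cup s^{p_\beta}\cup s^{p_\gamma}$ and $g^q=g^{p_\alpha}\cup g^{p_\beta}\cup g^{p_\gamma}\cup\{\{\alpha,\beta\},\{\alpha,\gamma\},\{\beta,\gamma\}\}$. Because each new edge crosses between distinct columns $s^{p_\cdot}\setm r$ and the $p_\alpha$'s agree on $[r]^2$ by the isomorphism requirement, one easily checks $g^q\cap[s^{p_\delta}]^2=g^{p_\delta}$ for $\delta\in\{\alpha,\beta,\gamma\}$; hence $q$ is a legitimate common extension, and it forces the three new edges into $\dot G$ with the orientations inherited from $D^*$, thereby realizing $\arr{C_3}\hookrightarrow\dot D[\dot W]$.

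The main obstacle is the $\Delta$-system bookkeeping ensuring the amalgamation $q$ is a legitimate condition; the conceptual content---that every uncountable $Y'\in V$ hosts a directed triangle of $D^*$---is already packaged by Theorem \ref{Komgprop}(1), and no delicate preservation of partition properties through $\mb P_0$ is required because the triple $\{\alpha,\beta,\gamma\}$ is selected in the ground model.
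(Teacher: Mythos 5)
The proposal is correct but uses a genuinely different orientation of $\dot G$ than the paper. The paper's proof makes the orientation depend on the generic graph itself: for $\alpha<\beta$, whether $\alpha\beta$ or $\beta\alpha$ is an arc of $\dot D$ is determined by whether $\beta$ is adjacent in $\dot G$ to an auxiliary vertex $\beta+n$ (where $\alpha=f_\beta(n)$). This makes each new arc's direction \emph{steerable} by deciding auxiliary adjacencies in the extension, which is then exploited with a two-model elementary submodel argument, and is precisely the flexibility behind the paper's remark that the same proof yields $\ENLL{\dot G}{D_0}$ for \emph{any} finite digraph $D_0$. Your proof instead fixes a ground-model orientation $D^*$ of $K_\omg$ from Theorem \ref{Komgprop}(1) (hence ultimately from Moore's $\omg\nrightarrow[\omg;\omg]^2_2$), restricts it along $\dot G$, and finds the triangle by a $\Delta$-system amalgamation on twin conditions, invoking the $\ENLL$-property of $D^*$ on the uncountable $\Delta$-system index set to pick a directed triangle of $D^*$ whose three new cross-edges can be inserted freely. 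This is a legitimate and arguably more modular proof of the claim as stated, at the cost of some generality: since the direction of each newly-forced edge is predetermined by $D^*$, you cannot prescribe the arc orientations at will, so your argument does not extend to arbitrary finite $D_0$ (e.g.\ a transitive triangle or a single arc pointing a prescribed way), only to those digraphs guaranteed to appear inside every uncountable restriction of $D^*$.
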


\begin{proof}
 Let $f_\beta:\omega\to \beta$ be a bijection for $\beta\in \omg$. We define an orientation $\dot D$ as follows: if $\alpha<\beta<\omg$ and $\alpha\beta\in \dot G$ then ${\alpha\beta}\in E(\dot D)$ if and only if $\beta$ and $\beta+n$ are connected in $\dot G$ for  $\alpha=f_\beta(n)$; otherwise, $\beta\alpha\in E(\dot D)$.

 Suppose that $p_0\force \dot W$ is uncountable and let $p_0,\dot W,\mb P_0,(f_\alpha)_{\alpha<\omg} \in M_0\prec M_1$ where $M_0,M_1$ are countable elementary submodels of $H(\aleph_2)$. Find $\beta\in \omg\setm M_1$ and $p\leq p_0$ so that $p\force \beta \in \dot W$. Let $I=\{n:\beta+n\in s^p\}$ and find a $p'\in M_1$ so that $p$ and $p'$ are compatible and there is $\beta'\in s^{p'}\setm \ran(f_\beta\uhr I)$ with $p'\force \beta'\in \dot W$. 

Let $J=\{n:\beta'+n\in s^{p'}\}$ and find $p''\in M_0$ compatible with both $p$ and $p'$ such that there is $$\beta''\in s^{p''}\setm (\ran(f_\beta\uhr I)\cup\ran(f_{\beta'}\uhr J))$$ with $p''\force \beta''\in \dot W$. 

It is easy to see that we can find a common extension $q$ of $p,p'$ and $p''$ which forces that $\{\beta,\beta',\beta''\}$ is a copy of $\arr{C_3}$.
\end{proof}
Let us remark that the same argument shows  $V^{\mb P_0}\models \ENLL{\dot G}{D_0}$ for any finite digraph $D_0$.

	Now define a finite support iteration $(\mb P_\alpha,\dot{\mb Q}_\beta)_{\alpha\leq \omega_2,\beta<\omega_2}$ as follows: in $V^{\mb P_\alpha}$, we consider an orientation $\dot D_{\alpha}$ of $\dot G$ and let $$\dot{\mb Q}_\alpha=\{q\in Fn(\omega_1,\omega): \dot D_\alpha[q^{-1}(n)] \text{ is acyclic for all }n\in \omega\}.$$ 
	
	Recall that $Fn(\lambda,\kappa)$ denotes the set of all finite partial functions from $\lambda$ to $\kappa$.
	
	Clearly, the forcing $\dot{\mb Q}_\alpha$ introduces an $\omega$-partition $\{\dot W_n:n<\omega\}$ of $\omega_1$ such that  $\dot D_\alpha[\dot W_n]$ has no directed cycles. In turn, $V^{\mb P_\alpha*\dot Q_\alpha}\models \chr{\dot D_\alpha}\leq \oo.$
	
	Our goal is to prove that this iteration is ccc and in the final model $V^{\mb P_{\omega_2}}$ our graph $\dot G$ is uncountably chromatic.

	\begin{tclaim} The set of $p\in \mb P_{\alpha}$ which satisfy
	\begin{enumerate}
		\item $p(\xi)\in V$ and $\dom(p(\xi))\subseteq s^{p(0)}$, and
		\item $p\uhr \xi$ decides the orientation of $\dot D_\xi$ on $\dom(p(\xi))$ 
	\end{enumerate}	for all $\xi\in \supp(p)$ is dense in $\mb P_{\alpha}$.
	\end{tclaim}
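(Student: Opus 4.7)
The plan is to argue by induction on $\alpha$. If $\alpha=0$ the claim is vacuous; if $\alpha$ is a limit, or $\alpha=\beta+1$ with $\beta\notin\supp(p)$, then $\supp(p)$ is contained in some $\beta<\alpha$ and the claim will reduce immediately to the inductive hypothesis applied in $\mb P_\beta$.

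The substantive case is $\alpha=\beta+1$ with $\beta\in\supp(p)$. Here I will successively extend $p\uhr\beta$ in four stages: (i) decide $p(\beta)$ as a ground-model colouring $\check f$; (ii) enlarge the zeroth coordinate so that it contains $\dom(f)$; (iii) decide the orientation of $\dot D_\beta$ on the finitely many edges of $\dot G$ lying inside $[\dom(f)]^2$; and (iv) invoke the inductive hypothesis one final time in $\mb P_\beta$ to restore the required structure at all coordinates $\xi<\beta$.

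For (i), since $p(\beta)$ is a $\mb P_\beta$-name for a finite partial function from $\omg$ to $\omega$, and any such function in a generic extension is automatically a finite set of pairs of ground-model ordinals, I can find $q^*\leq p\uhr\beta$ in $\mb P_\beta$ and $f\in V$ with $q^*\force p(\beta)=\check f$. For (ii) I simply set $q^{**}(0):=(s^{q^*(0)}\cup\dom(f),\,g^{q^*(0)})$ and keep the higher coordinates of $q^{**}$ equal to those of $q^*$; adding only vertices (and no edges) is legal in $\mb P_0$ and preserves the acyclicity of $\dot D_\beta[\check f^{-1}(n)]$ already forced by $q^*$. For (iii), the set $g^{q^{**}(0)}\cap[\dom(f)]^2$ is finite and, since $\dom(f)\subseteq s^{q^{**}(0)}$, coincides with what $q^{**}$ forces to be $\dot G\cap[\dom(f)]^2$, so finitely many further extensions of $q^{**}$ in $\mb P_\beta$ suffice to decide the orientation of $\dot D_\beta$ on each such edge, producing a condition $q^{***}$. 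Finally, in (iv) I apply the inductive hypothesis to $q^{***}\in\mb P_\beta$ to obtain $q\leq q^{***}$ in the desired dense set, and set $q'\uhr\beta:=q$ and $q'(\beta):=\check f$ to get $q'\in\mb P_\alpha$ with $q'\leq p$. Conditions (1) and (2) will hold at $\xi=\beta$ in $q'$ because every commitment made by $q^{***}$ is inherited by $q$, while at $\xi<\beta$ in $\supp(q')$ they hold because $q$ was chosen from the dense set of $\mb P_\beta$.

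The main difficulty is that the extensions in (i)--(iii) can each destroy the inductive structure at coordinates below $\beta$, which is precisely what the dense set demands. The remedy is to delay step (iv) until the very end: by then all earlier commitments---the equality $p(\beta)=\check f$, the inclusion $\dom(f)\subseteq s^{q^{***}(0)}$, and the orientation decisions for $\dot D_\beta$ on $\dom(f)$---are locked in by $q^{***}$, and any further extension in step (iv) can only preserve them.
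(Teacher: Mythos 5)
Your proposal is correct and takes essentially the same approach as the paper, which simply remarks ``Easy induction on $\alpha$'' without giving details; your four-stage successor step (decide $p(\beta)$ as a ground-model $f$, enlarge $s^{p(0)}$ to cover $\dom(f)$, decide the finitely many orientations of $\dot D_\beta$ inside $[\dom(f)]^2$, and only then invoke the inductive hypothesis in $\mb P_\beta$) is a faithful fleshing-out of that induction, and the observation that the inductive hypothesis must be applied last so as not to be disturbed by the earlier commitments is exactly the right organising idea.
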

	We will call these conditions \emph{determined} and we only work with determined conditions from now on if not mentioned otherwise.
	\begin{proof} Easy induction on $\alpha$.
	\end{proof}
	
	We say that $p,p'\in \mb P_0$ are \emph{twins} if $|s^p|=|s^{p'}|$, $s^{p}\cap s^{p'}<s^{p}\setm  s^{p'}< s^{p'}\setm  s^{p}$ (or vica versa $s^{p'}\setm  s^{p}< s^{p}\setm  s^{p'}$) and the unique monotone bijection $\psi_{p,p'}:s^{p}\to s^{p'}$ (which fixes $s^{p}\cap s^{p'}$) gives a graph isomorphism between the graphs $p$ and $p'$. We say two conditions from the iteration $q,q'\in \mb P_\alpha$ are \emph{twins} if \begin{enumerate}
		\item $p(0)$ and $p'(0)$ are twins,
\end{enumerate}  and for all $\xi\in \supp(p)\cap \supp(p')$
\begin{enumerate}
\setcounter{enumi}{1}		
\item $\dom(p'(\xi))=\psi_{p,p'}[\dom(p(\xi))]$,
		\item  $p(\xi)(\delta)=p'(\xi)(\psi_{p,p'}(\delta))$ for all $\delta\in \dom(p(\xi))$, and
		\item\label{isocond} $p\uhr \xi\force {a_0 a_1}\in \dot D_\xi$ if and only if $p'\uhr \xi\force {a'_0 a_1'}\in \dot D_\xi$ for all $\xi\in \supp(p)\cap \supp(p')$ where $a_i'=\psi_{p,p'}(a_i)$.
	\end{enumerate} 
	
		\begin{tclaim}\label{twinclaim} If $p,p'\in \mb P_\alpha$ are determined and  twins then they have a minimal common extension $p\vee p'$.
	\end{tclaim}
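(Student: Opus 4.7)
The plan is to define $q := p \vee p'$ coordinate-wise by recursion on $\alpha$, setting $q(\xi) := p(\xi) \cup p'(\xi)$ at each $\xi$, and verifying inductively that this yields a common extension in $\mb P_\xi$.

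At coordinate $0$, set $q(0) = (s^{p(0)} \cup s^{p'(0)}, g^{p(0)} \cup g^{p'(0)})$; the twin hypothesis ensures that $g^{p(0)}$ and $g^{p'(0)}$ coincide on $s^{p(0)} \cap s^{p'(0)}$, so $q(0) \in \mb P_0$ extends both $p(0)$ and $p'(0)$. For the inductive step at coordinate $\xi > 0$, assume $q \uhr \xi \in \mb P_\xi$ has been constructed as a determined common extension of $p \uhr \xi$ and $p' \uhr \xi$. By twin condition (3), the values of $p(\xi)$ and $p'(\xi)$ agree on their common domain, so $q(\xi) := p(\xi) \cup p'(\xi)$ is a well-defined finite partial function from $\omg$ to $\omega$.

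The crucial check is that $q \uhr \xi \force q(\xi) \in \dot{\mb Q}_\xi$, i.e., $\dot D_\xi[q(\xi)^{-1}(n)]$ is acyclic for every $n < \omega$. Since $q \uhr \xi$ is determined and extends both $p \uhr \xi$ and $p' \uhr \xi$, it already decides $\dot D_\xi$ on $\dom(p(\xi)) \cup \dom(p'(\xi))$, and forces each of $\dot D_\xi[p(\xi)^{-1}(n)]$ and $\dot D_\xi[p'(\xi)^{-1}(n)]$ to be acyclic. Twin condition (\ref{isocond}) guarantees that the decided orientation is invariant under $\psi_{p,p'}$, which fixes the common vertex set, so the two halves are isomorphic digraphs agreeing on their shared part. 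As acyclicity is equivalent to having digirth bigger than every $k$, Lemma \ref{amalglemma} (2) applied to each $k$ separately yields that the union $\dot D_\xi[q(\xi)^{-1}(n)]$ is acyclic as well.

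At limit stages of $\alpha$, finite support reduces matters to the finitely many coordinates in $\supp(p) \cup \supp(p')$, which are handled as above; at all remaining coordinates $q$ is trivial. The resulting condition $q$ is \emph{minimal} in the sense that no commitments are imposed beyond the coordinate-wise union. I expect the main obstacle to be the careful bookkeeping ensuring Lemma \ref{amalglemma} (2) is genuinely applicable at each stage: this rests on the determined status encoded by condition (2) of the definition of determined conditions together with the matching of forced orientations built into twin condition (\ref{isocond}), which together convert the abstract statement ``$q \uhr \xi$ forces acyclicity'' into the concrete ground-model digraph-union scenario handled by the lemma.
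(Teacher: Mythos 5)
Your proposal is correct and follows essentially the same route as the paper: define $p\vee p'$ coordinate-wise as the union, then show by induction on $\xi$ that the union $(p\vee p')(\xi)$ still forces acyclicity of each colour class, by using determinedness to pull $\dot D_\xi[p(\xi)^{-1}(n)]$ and $\dot D_\xi[p'(\xi)^{-1}(n)]$ down to concrete isomorphic ground-model digraphs (via twin condition (\ref{isocond})) and then invoking Lemma \ref{amalglemma}(2). The only cosmetic difference is that you spell out that acyclicity is ``digirth $>k$ for every $k$'' before applying the lemma, which the paper leaves implicit.
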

	\begin{proof} We define $p\vee p'$ by 
		 \[
    (p\vee p')(\alpha) = \begin{cases}
		     (s^p\cup s^{p'},g^p\cup g^{p'}), & \text{for } \alpha=0,\text{ and}\\
        p(\xi)\cup p'(\xi), & \text{for } \xi\in\alpha \setm\{0\}.
   
        \end{cases}
  \]
	It is clear that $(p\vee p')(0)\in \mb P_0$ so let us show $(p\vee p')\uhr \xi$ forces that there are no monochromatic cycles with respect to $p(\xi)\cup p'(\xi)$. We do this by induction on $\xi<\alpha$. 

Suppose that $p\vee p'\uhr \xi$ is a condition. If $\xi\in \supp(p)\setm \supp(p')$ or $\xi\in \supp(p')\setm \supp(p)$ then $p\vee p'\uhr \xi+1\in \mb P_{\xi+1}$  as well.
	

Now let $\xi\in \supp(p)\cap \supp(p')$. We need to show that $p\vee p'\uhr \xi$ forces that there are no monochromatic directed cycles in $\dot D_\xi$ with respect to $(p\vee p')(\xi)$. This will easily follow from Lemma \ref{amalglemma}: fix $k<\oo$ and consider $D=\dot D_\xi[\{v\in s^p:p(\xi)(v)=k\}$ and $D'=\dot D_\xi[\{v\in s^{p'}:p'(\xi)(v)=k\}$. $p\vee p'\uhr \xi$ forces that $D$ and $D'$ are isomorphic, acyclic digraphs satisfying the assumptions of Lemma \ref{amalglemma} as $p$ and $p'$ are twins. Hence, by Lemma \ref{amalglemma} (2), $D\cup D'=\dot D_\xi[\{v\in s^{p\vee p'}:(p\vee p')(\xi)(v)=k\}$ is acyclic as well.

\end{proof}

	\begin{tclaim} $\mb P_{\alpha}$ is ccc for all $\alpha\leq \omega_2$.
	\end{tclaim}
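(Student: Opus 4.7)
The plan is to argue directly using the $\Delta$-system lemma and pigeonholing to produce two twin conditions inside any uncountable subset of $\mb P_\alpha$, and then appeal to Claim \ref{twinclaim} for a common extension. I would proceed by induction on $\alpha \leq \omega_2$. The base case $\alpha = 0$ is essentially Cohen forcing on finite graphs: given uncountably many $(s^p, g^p)$, thin down so that the vertex sets form a $\Delta$-system of constant size and pigeonhole on the (finite) graph isomorphism type; the resulting conditions are pairwise twins in the sense of $\mb P_0$ and amalgamate by union.

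For $\alpha > 0$, given an uncountable family $\{p_\eta : \eta < \omega_1\} \subseteq \mb P_\alpha$, I would first replace each $p_\eta$ by a determined extension, which is possible by the preceding claim. Then in order: (i) apply the $\Delta$-system lemma to the supports to extract an uncountable subfamily whose supports form a $\Delta$-system with root $R$ and common size $n$; (ii) apply the $\Delta$-system lemma to $\{s^{p_\eta(0)} : \eta < \omega_1\}$ to obtain a common root $r$ and common size $m$, and thin further so that the blocks $s^{p_\eta(0)}\setm r$ appear in strictly increasing order as $\eta$ increases; (iii) pigeonhole, using CH and the finiteness of each piece of data, so that the unique order-preserving bijection $\psi_{\eta,\eta'}: s^{p_\eta(0)} \to s^{p_{\eta'}(0)}$ fixing $r$ simultaneously carries $g^{p_\eta(0)}$ to $g^{p_{\eta'}(0)}$, carries $\dom(p_\eta(\xi))$ to $\dom(p_{\eta'}(\xi))$ while preserving the values $p_\eta(\xi)(\delta)$ for each $\xi \in R$, and matches, for each pair $a_0, a_1 \in \dom(p_\eta(\xi))$, the orientation of $\{a_0, a_1\}$ in $\dot D_\xi$ decided by $p_\eta \uhr \xi$ with the corresponding orientation decided by $p_{\eta'} \uhr \xi$.

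The main obstacle is the last item in step (iii), i.e.\ condition (4) in the definition of twin conditions for the iteration. This is exactly where restricting to determined conditions pays off: for each $\xi \in R$ and each pair $a_0, a_1 \in \dom(p_\eta(\xi))$, the orientation of $\{a_0, a_1\}$ in $\dot D_\xi$ is decided by $p_\eta \uhr \xi$ as a concrete ground-model fact, so the total orientation data attached to $p_\eta$ is a single finite object taking only countably many possible values (in fact, only finitely many after fixing the rest of the pigeonhole data). CH then allows us to pigeonhole on this data alongside the rest. Once all of (i)--(iii) are secured, any two distinct conditions from the resulting uncountable subfamily are twins in the sense of the definition just given, and Claim \ref{twinclaim} delivers the common extension $p_\eta \vee p_{\eta'}$, completing the ccc verification.
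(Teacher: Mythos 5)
Your proposal is correct and matches the paper's approach: reduce to determined conditions, extract an uncountable subfamily of pairwise twins via the $\Delta$-system lemma (applied to both supports and the sets $s^{p(0)}$) together with pigeonholing on the remaining finite isomorphism and orientation data, and then invoke Claim~\ref{twinclaim} for a common extension. One small remark: the appeal to CH in step (iii) is unnecessary, since once the two $\Delta$-system roots and the common sizes are fixed, the data to be matched (the graph $g^{p(0)}$ up to $\psi$, the domains and values of $p(\xi)$ for $\xi$ in the support root, and the decided orientations) is finite, so the pigeonhole is over finitely many patterns.
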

	\begin{proof}Indeed, the $\Delta$-system lemma implies that any uncountable set of conditions must contain an uncountable set of pairwise twins, and Claim \ref{twinclaim} implies that any two twin conditions are comparable.
	\end{proof}
	
	Now, by a standard bookkeeping argument, we can choose the names $\dot D_{\alpha}$ of $\dot G$ so that $$V^{\mb P_{\omega_2}}\models \dchr{G}\leq \oo.$$ 
	
Before we show that the chromatic number of $G$ is still uncountable after the iteration, let us emphasize a simple fact which again follows Lemma \ref{amalglemma} (1):

\begin{tclaim}\label{nodirpath} Suppose that $p,p'\in \mb P_{\omega_2}$ are determined and  twins and $\delta\in s^p\setm s^{p'}$. Then $p\vee p'\uhr \xi$ forces that there is no directed path from $\delta$ to $\delta'=\psi_{p,p'}(\delta)$ in $\dot D_\xi$ which is monochromatic with respect to $p\vee p'(\xi)$.
	\end{tclaim}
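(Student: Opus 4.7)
The plan is to reduce the claim to a direct application of Lemma \ref{amalglemma}(1). Fix a color $k\in\oo$ and consider the two subdigraphs
\[D=\dot D_\xi[\{v\in \dom(p(\xi)):p(\xi)(v)=k\}] \text{ and } D'=\dot D_\xi[\{v\in \dom(p'(\xi)):p'(\xi)(v)=k\}].\]
Because $p,p'$ are determined and $p\vee p'\uhr\xi$ extends both $p\uhr\xi$ and $p'\uhr\xi$, the condition $p\vee p'\uhr\xi$ decides both $D$ and $D'$; because $p,p'$ are twins (clauses (3) and (\ref{isocond}) of the definition), it also forces $\psi_{p,p'}$ to restrict to a digraph isomorphism $D\to D'$ which is the identity on $R\cap V(D)$. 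Moreover $D$ and $D'$ are both acyclic, since $p$ and $p'$ are valid conditions of $\dot{\mb Q}_\xi$ in their respective generic extensions and each color class of such a condition induces an acyclic subdigraph of $\dot D_\xi$.

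Suppose toward contradiction that some $q\leq p\vee p'\uhr\xi$ forces the existence of a directed path $(a_0=\delta,a_1,\dots,a_\ell=\delta')$ in $\dot D_\xi$ which is monochromatic of some color $k$ under $(p\vee p')(\xi)=p(\xi)\cup p'(\xi)$. Every vertex of the path must then lie in $\dom(p(\xi))\cup\dom(p'(\xi))$ with color $k$, and each edge is forced to be an edge of $D\cup D'$, so the path already lies in the ground-model object $D\cup D'$.

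Now Lemma \ref{amalglemma}(1) applied to the two-piece family $\{D,D'\}$ with digirth parameter any integer $k'\geq\max(\ell,3)$ (which is permitted because $D,D'$ are acyclic, hence have digirth exceeding every finite $k'$) shows that any path from $\delta\in V(D)\setm V(D')$ to its $\psi_{p,p'}$-image $\delta'\in V(D')\setm V(D)$ in $D\cup D'$ has length strictly greater than $k'\geq\ell$, a contradiction.

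The main subtlety I expect is verifying that the twin conditions really transfer to the color-$k$ subdigraphs, so that $\psi_{p,p'}$ restricts to the identity-on-$R$ digraph isomorphism demanded by Lemma \ref{amalglemma}(1). This is exactly the purpose of clauses (3) and (\ref{isocond}) in the definition of twin conditions; once those are invoked together with the determined-conditions framework, the lemma supplies the conclusion with no further work.
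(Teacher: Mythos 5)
Your proposal is correct and follows exactly the route the paper has in mind: the paper itself offers no proof beyond the remark that the claim ``follows from Lemma~\ref{amalglemma}~(1)'', and your argument supplies precisely that application, with $D$, $D'$ the colour-$k$ slices of the two twin conditions, which are isomorphic acyclic digraphs meeting in $R=V(D)\cap V(D')\subseteq s^p\cap s^{p'}$, and $\delta$, $\delta'=\psi_{p,p'}(\delta)$ in the respective non-root parts; since acyclicity means digirth exceeds every finite $k'$, Lemma~\ref{amalglemma}~(1) rules out any finite-length directed path from $\delta$ to $\delta'$ in $D\cup D'$.

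The one step you state without justification is ``each edge is forced to be an edge of $D\cup D'$''. This is where the real content lies and deserves a sentence: a vertex of the path in $s^p\setminus s^{p'}$ cannot be joined in $\dot G$ to a vertex in $s^{p'}\setminus s^p$, because $(p\vee p')(0)=(s^p\cup s^{p'},g^p\cup g^{p'})$ contains no such edge and extensions in $\mb P_0$ never add edges between vertices already present in the condition. Hence every arc of the path lies entirely inside $\dom(p(\xi))$ or entirely inside $\dom(p'(\xi))$, and since both are determined by $p\uhr\xi$ and $p'\uhr\xi$ respectively, the path is a ground-model object living in $D\cup D'$, to which the amalgamation lemma applies.
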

	
Now, we can prove the following:

\begin{tclaim}\label{amalg} Suppose that $p,p'\in \mb P_{\omega_2}$ are determined and twins and $\delta\in s^p\setm s^{p'}$. Then there is a minimal extension $p\vee_\delta p'$ of $p\vee p'$ which forces that $\delta$ is connected to $\delta'=\psi_{p,p'}(\delta)$ in $\dot G$.
	\end{tclaim}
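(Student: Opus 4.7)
The plan is to let $p\vee_\delta p'$ be the extension of $p\vee p'$ obtained by simply adjoining the edge $\{\delta,\delta'\}$ to the graph component and leaving every other coordinate untouched: set $(p\vee_\delta p')(0)=(s^{p\vee p'},g^{p\vee p'}\cup\{\{\delta,\delta'\}\})$ and $(p\vee_\delta p')(\xi)=(p\vee p')(\xi)$ for $\xi\ge 1$. Since $\delta\in s^p\setm s^{p'}$ implies $\delta'=\psi_{p,p'}(\delta)\in s^{p'}\setm s^{p}$, the pair $\{\delta,\delta'\}$ is genuinely new in $g^p\cup g^{p'}$, and the resulting condition clearly forces $\delta\delta'\in \dot G$. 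Minimality is built in, since we have enlarged $p\vee p'$ by exactly one piece of data.

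The nontrivial step is to verify that this really gives a condition in $\mb P_{\omega_2}$: at every coordinate $\xi\in\supp(p)\cup\supp(p')$ the unchanged colouring $(p\vee p')(\xi)$ must still force each colour class to be acyclic in $\dot D_\xi$ once the new arc through $\delta\delta'$ appears. Fix such a $\xi$. If at most one of $\delta,\delta'$ lies in $\dom((p\vee p')(\xi))$, then no colour class contains both endpoints of the new edge, so the new arc in $\dot D_\xi$ cannot lie inside any monochromatic subgraph and there is nothing to check. Otherwise both are coloured and, because $p$ and $p'$ are twins, clause (3) of the twin definition forces $p(\xi)(\delta)=p'(\xi)(\delta')=:c$. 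Any new monochromatic directed cycle in colour class $c$ of $\dot D_\xi$ must use the new arc exactly once, so removing that arc leaves a monochromatic directed path (inside colour class $c$, in the old $\dot D_\xi$) either from $\delta$ to $\delta'$ or from $\delta'$ to $\delta$. The former is ruled out by Claim \ref{nodirpath} applied to $\delta\in s^p\setm s^{p'}$; the latter by the same claim with the roles of $p$ and $p'$ swapped, applied to the element $\delta'\in s^{p'}\setm s^{p}$ (whose twin image under $\psi_{p',p}$ is $\delta$). Hence $(p\vee p')(\xi)$ remains a $\dot{\mb Q}_\xi$-condition over $p\vee_\delta p'\uhr \xi$, and an induction on $\xi$ (as in the proof of Claim \ref{twinclaim}) shows that $p\vee_\delta p'\in \mb P_{\omega_2}$.

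The main subtlety, which I expect to be the principal obstacle to writing up cleanly, is that the orientation of the new edge in $\dot D_\xi$ is not decided by $p\vee p'$, since $\dot D_\xi$ is a $\mb P_\xi$-name and its behaviour on freshly added edges of $\dot G$ can depend on the generic. We therefore cannot commit to a direction for the arc $\delta\delta'$ in $\dot D_\xi$; what saves us is that Claim \ref{nodirpath}, together with its mirror image obtained by swapping $p$ and $p'$, rules out monochromatic paths in both directions simultaneously, making the entire verification orientation-agnostic.
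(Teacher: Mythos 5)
Your proposal is correct and follows essentially the same route as the paper: define $p\vee_\delta p'$ by adjoining the single edge $\{\delta,\delta'\}$ to the $\mb P_0$-coordinate of $p\vee p'$, leave every other coordinate untouched, and show by induction on $\xi$ that each colouring $(p\vee p')(\xi)$ still forces acyclicity in $\dot D_\xi$, because any putative monochromatic cycle must use the new arc and the remainder would be a monochromatic directed path between $\delta$ and $\delta'$ whose orientation and colouring are already decided, contradicting Claim \ref{nodirpath}. Your explicit handling of the orientation ambiguity---invoking Claim \ref{nodirpath} once for $(p,p')$ and once for $(p',p)$ to rule out directed paths in both directions---spells out a symmetry that the paper's write-up leaves implicit, but the underlying argument is identical.
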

\begin{proof}
 We define $q=p\vee_\delta p'$ by
	 \[
    q(\alpha) = \begin{cases}
		     (s^{p\vee p'},g^{p\vee p'}\cup \{\delta,\delta'\}), & \text{for } \alpha=0, \text{ and}\\
        (p\vee p')(\alpha), & \text{for } \alpha\in\omega_2\setm\{0\}.
   
        \end{cases}
  \]
That is, we essentially take $p\vee p'$ and add the single edge $\{\delta,\delta'\}$. If we prove that $q$ is a condition then we are done. 

We prove that $q\uhr \xi$ is a condition by induction on $\xi<\omega_2$. Suppose we proved that $q\uhr \xi$ is a condition but there is some $r\leq q\uhr \xi$ and $C$ so that $r$ forces that $C$ is a monochromatic cycle with respect to $q(\xi)$ in $\dot D_\xi$. $C$ must contain the edge $\{\delta,\delta'\}$ and so $r$ forces that $\delta$ and $\delta'$ are connected by a directed monochromatic path $P$ (not containing the edge $\{\delta,\delta'\}$). However, orientation and colouring on $P$ is decided by $p\vee p'\uhr \xi$ already, which contradicts {\DD Claim} \ref{nodirpath}.

\end{proof} Note that $p\vee_\delta p'$ is not  {\DD necessarily} determined.	In any case, to finish our proof, it suffices to show the following:

{\DD
	
		\begin{tclaim} $V^{\mb P_{\omega_2}}\models C_3\hookrightarrow \dot G[\dot X]$ for every uncountable $\dot X\subseteq \omg$.
	\end{tclaim}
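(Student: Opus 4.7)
The strategy is to lift Claim \ref{amalg} from two twin conditions amalgamated with one extra edge to three pairwise twin conditions amalgamated with three extra edges forming a triangle. Suppose $p_0 \in \mb P_{\omega_2}$ is determined with $p_0 \force \dot X$ uncountable. As in Claim \ref{forceamalg}, first build a family $\{(p_\alpha, \beta_\alpha) : \alpha < \omg\}$ of determined conditions $p_\alpha \leq p_0$ with $\beta_\alpha \in s^{p_\alpha(0)}$, $p_\alpha \force \beta_\alpha \in \dot X$, and the $\beta_\alpha$ strictly increasing. Using the $\Delta$-system lemma on $\supp(p_\alpha)$ and $s^{p_\alpha(0)}$ together with CH in the ground model to refine by finitely many isomorphism types, pass to an uncountable $Y \subseteq \omg$ on which $\{p_\alpha : \alpha \in Y\}$ are pairwise twin determined conditions with $\psi_{p_\alpha, p_{\alpha'}}(\beta_\alpha) = \beta_{\alpha'}$. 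Fix $\gamma_0 < \gamma_1 < \gamma_2 \in Y$ and write $p^i = p_{\gamma_i}$, $\delta_i = \beta_{\gamma_i}$.

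The candidate common extension $q$ is defined analogously to $p \vee_\delta p'$: set
$q(0)=(s^{p^0(0)} \cup s^{p^1(0)} \cup s^{p^2(0)},\, g^{p^0(0)} \cup g^{p^1(0)} \cup g^{p^2(0)} \cup \{\{\delta_0,\delta_1\},\{\delta_1,\delta_2\},\{\delta_0,\delta_2\}\})$
and $q(\xi) = p^0(\xi) \cup p^1(\xi) \cup p^2(\xi)$ for $\xi \geq 1$; the three colourings agree on their common domain by pairwise twinness. To verify $q \in \mb P_{\omega_2}$, induct on $\xi$ exactly along the lines of Claim \ref{twinclaim} and Claim \ref{amalg}. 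Assuming $q\uhr\xi$ is a condition and that some $r \leq q\uhr\xi$ forces a monochromatic cycle $C$ in $\dot D_\xi[q(\xi)^{-1}(k)]$, split on the number of new triangle arcs used by $C$. If $C$ uses none, it lives inside the three-way amalgamation of the colour-$k$ classes of $p^0(\xi), p^1(\xi), p^2(\xi)$ and the $n=3$ case of Lemma \ref{amalglemma} (2) gives a contradiction. If $C$ uses one or two, deleting them leaves a directed monochromatic path between two distinct $\delta_i$'s whose colouring and orientation are already fully decided by $q\uhr\xi$, contradicting the three-term analogue of Claim \ref{nodirpath} obtained from Lemma \ref{amalglemma} (1).

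The genuinely new situation, and the main obstacle, is when $C$ uses all three triangle arcs. Either $C$ strictly contains the triangle and therefore also contains an old monochromatic directed path between two $\delta_i$'s (reducing to the previous case), or $C$ is the triangle itself and $r$ has forced its three arcs into a directed $3$-cycle in $\dot D_\xi$. To rule out this last possibility, strengthen $q$ at each coordinate $\xi$ where $q(\xi)$ assigns $\delta_0, \delta_1, \delta_2$ a common colour by forcing a fixed acyclic orientation of the three new arcs in $\dot D_\xi$, say $\delta_0 \to \delta_1 \to \delta_2$ together with $\delta_0 \to \delta_2$. Such a strengthening is legitimate because the orientation of a freshly introduced $\dot G$-edge in $\dot D_\xi$ is not yet decided by $q\uhr\xi$, and the previous case analysis shows that any acyclic orientation of the new triangle preserves acyclicity of every colour class in the amalgamation. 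Once this strengthening is in place, $q \in \mb P_{\omega_2}$ and $q\force \{\delta_0,\delta_1,\delta_2\} \subseteq \dot X$ spans a copy of $C_3$ in $\dot G$, which is the desired conclusion.
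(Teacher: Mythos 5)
The overall strategy of the proposal is genuinely different from the paper's, and it contains a real gap at exactly the point you flag as the main obstacle.

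You amalgamate three pairwise twin conditions $p^0,p^1,p^2$ in one step and add all three triangle edges simultaneously. The paper instead works in two stages: it first forms a determined extension $r$ of the one-edge amalgamation $p_{\beta'}\vee_{\beta'}p_\beta$ over a countable elementary submodel $M$ with $\beta'\in M$ and $\beta\notin M$, and only then amalgamates $r$ with a twin $r'\in M$ and adds the single edge $\{\beta'',\beta\}$. In the resulting condition $q=r'\vee_{\beta''}r$, the other two triangle edges $\{\beta,\beta'\}$ and $\{\beta'',\beta'\}$ are already present in $r$ and $r'$ respectively, and since $r$ is determined and $\beta'$ lies in the root $\tilde s=s^r\cap s^{r'}$, the twin condition (4) forces these two edges to be oriented isomorphically in each $\dot D_\xi$ --- that is, $\beta'$ is a source or a sink of the triangle. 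Hence no orientation of the single new edge can close a directed $3$-cycle, and Claim \ref{nodirpath} is all that is needed. Your one-shot construction loses precisely this asymmetry: the three $\delta_i$ are exchangeable and nothing forces one of them to be a source or a sink.

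The fix you propose for this case --- ``strengthen $q$ at each coordinate $\xi$ by forcing a fixed acyclic orientation of the three new arcs in $\dot D_\xi$'' --- is not available. The orientation $\dot D_\xi$ is a $\mb P_\xi$-name fixed in advance by the bookkeeping; once an edge is forced into $\dot G$ by $q\uhr\xi$, the name together with $q\uhr\xi$ may already \emph{decide} the direction of that edge, and it can do so adversarially. For instance, nothing prevents the bookkeeping from choosing $\dot D_\xi$ so that whenever three vertices forming a triangle in $\dot G$ appear, the triangle is oriented cyclically. In that situation $q\uhr\xi$ forces the three new arcs to form $\arr{C}_3$, there is no extension that ``re-orients'' them, and if $q(\xi)$ assigns a common colour to $\delta_0,\delta_1,\delta_2$ then $q$ is simply not a condition. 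The claim that ``the orientation of a freshly introduced $\dot G$-edge in $\dot D_\xi$ is not yet decided by $q\uhr\xi$'' is what fails. To salvage your approach you would have to interleave the addition of edges with taking determined extensions so that at the point where the third edge is added, the first two already have decided, mutually compatible orientations --- which is exactly what the paper's two-stage argument accomplishes.
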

	\begin{proof} Suppose $p\force \dot X$ is uncountable for some $p\in{\mb P_{\omega_2}}$; we will find $q\leq p$ and $\beta,\beta',\beta''\in \omg$ such that $q\force \{\beta,\beta',\beta''\}$ is a triangle in $\dot G[\dot X]$.

As ${\mb P_{\omega_2}}$ is ccc, there is an uncountable set $Y\subseteq \omg$ in $V$ and determined conditions $p_\beta\leq p$ so that $\beta\in s^{p_\beta}$ and $p_\beta\force \beta\in \dot X$ whenever $\beta\in Y$.  


Take a countable elementary submodel $M$ of some large enough $H(\theta)$ containing everything relevant and let $\beta\in Y\setm M$. We let $s=s^{p_\beta}\cap M$. Using elementarity, find $\beta'\in Y\cap M$ so that 
\begin{enumerate}
                                                                                                                                          	                                                                                                     \item $p_\beta$ and $p_{\beta'}$ are twins,
\item $s=s^{p_\beta}\cap s^{p_{\beta'}}$, and
\item $\beta=\psi_{p',p}(\beta')$.                                                                                          \end{enumerate} Let $r$  be a determined condition extending $p_{\beta'}\vee_{\beta'} p_{\beta}$. 

Now, let $\tilde s=s^{r}\cap M$ and, using elementarity again, find a condition $r'\in M$ so that 
\begin{enumerate}
                     \setcounter{enumi}{3}	                      	                                   \item $r$ and $r'$ are twins,
\item $\tilde s=s^{r}\cap s^{r'}$, and
\item $\beta=\psi_{r',r}(\beta'')$.                                                                                          \end{enumerate} Finally, let $q=r'\vee_{\beta''} r$. Clearly, $q\force ``\{\beta,\beta',\beta''\}$ is a triangle in $\dot G[\dot X]$".
\end{proof}}
\end{proof}

{\DD In particular, if one is looking for the value of $f(\aleph_1)$ from Conjecture \ref{ENLconj}, it needs to be larger than $\aleph_1$, at least consistently.}

\section{Open problems}\label{probsec}

It seems that it is non trivial to find in ZFC a single digraph $D$ with uncountable dichromatic number. Indeed, even to show that $K_\omg$ has an orientation with uncountable dichromatic number required the application of $\omg\nrightarrow[\omg;\omg]^2_2$ which is a deep result of J. Moore \cite{Lspace}. Hence, we have the following meta-problem:

\begin{prob} Provide a simple/elementary proof of the fact that  $K_\omg$ has an orientation with uncountable dichromatic number.
\end{prob}

An obvious question which comes to mind regarding the definition of $\dchr G$ is whether the $\sup$ is actually a $\max$:

\begin{quest} Suppose that an undirected graph $G$ has orientations $D_\xi$ for $\xi<\cf(\kappa)$ so that $\sup\{\chr{D_\xi}:\xi<\cf(\kappa)\}=\kappa$. Is there a single orientation $D$ of $G$ so that $\chr D=\kappa$?
\end{quest}

We conjecture that the answer is yes if $\cf(\kappa)=\oo$ but no if $\cf(\kappa)>\oo$.
\medskip

Regarding obligatory subgraphs and Proposition \ref{halfembed}, we ask:

\begin{prob} Characterize those orientations $D^*$ of $\half$ so that $D^*$ embeds into any digraph $D$ with uncountable dichromatic number.
\end{prob} 

At this point, we don't even have a list of those digraphs say on 4 vertices which embed into any digraph $D$ with uncountable dichromatic number.

\subsection{More on cycles} It is known that $\chr G>\omega$ implies that $G$ has cycles of all but finitely many lengths \cite{EH0, thomassen}. 

\begin{quest} Are there digraphs $D$ with $\chr D>\omega$ so that $\arr C_k$ does not embed into $D$ for infinitely many $k$?
\end{quest}

More generally, describe what sets of cycles can be omitted by $D$ with $\chr D>\omega$. In particular, answer the following:

\begin{quest} Does $D\to (\arr{C}_3)^1_\oo$ imply that $\arr C_4\hookrightarrow D$?
\end{quest}

The answer is yes for the undirected version: $G\to ({C}_3)^1_\oo$ obviously implies $\chr G>\oo$ and so $ C_4\hookrightarrow G$ and even $G\to ({C}_{2k})^1_\oo$ holds for all $2\leq k<\oo$.

It was shown by Erd\H os and R. Rado \cite{erdosrado} that triangle-free graphs $G$ with size and chromatic number $\kappa$ can be constructed without additional set theoretic assumptions.

\begin{quest} Is there in ZFC a digraph $D$ of size and dichromatic number $\omg$ such that $D$ has no directed triangles?
\end{quest}

 If we omit the requirement on size then the shift graphs provide an example by Theorem \ref{Shiftthm}.

Finally, it would be very interesting to construct digraphs as in Theorem \ref{digirth} in ZFC:

\begin{quest} 
Is there in ZFC, for every $k<\oo$, a digraph $D$ of uncountable dichromatic number such that $D$ has digirth at least $k$?
\end{quest}

\subsection{Connected subgraph}

It is clear that every graph with uncountable chromatic number has a connected component with uncountable chromatic number. Similarly:

\begin{obs}
 Suppose that $\chr D >\omega$. Then there is $D_0\subseteq D$ so that $\chr{D_0}>\omega$ and $D_0$ is strongly connected.
\end{obs}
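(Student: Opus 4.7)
The plan is to partition $D$ into its strongly connected components and argue that at least one of them must carry all the ``dichromatic complexity''. Let $\{C_\alpha:\alpha\in I\}$ denote the strongly connected components of $D$, i.e.\ the equivalence classes under the relation $u\sim v$ iff there is a directed path from $u$ to $v$ and from $v$ to $u$ in $D$.

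The key observation is that every directed cycle of $D$ is contained entirely within a single SCC: if a directed cycle visited two distinct classes $C_\alpha$ and $C_\beta$, then all vertices of the cycle would be mutually reachable from each other, forcing $C_\alpha=C_\beta$. Consequently, a vertex set $A\subseteq V(D)$ is acyclic in $D$ if and only if $A\cap C_\alpha$ is acyclic in $D[C_\alpha]$ for every $\alpha\in I$.

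Now suppose towards a contradiction that $\chi(D[C_\alpha])\leq\omega$ for every $\alpha\in I$. For each $\alpha$ fix a decomposition $C_\alpha=\bigcup_{n<\omega}A_{\alpha,n}$ into acyclic sets, and define
\[
A_n=\bigcup_{\alpha\in I}A_{\alpha,n}\quad(n<\omega).
\]
By the observation above, $A_n$ is acyclic in $D$ since $A_n\cap C_\alpha=A_{\alpha,n}$ is acyclic for each $\alpha$. Hence $V(D)=\bigcup_{n<\omega}A_n$ witnesses $\chi(D)\leq\omega$, contradicting the hypothesis.

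Therefore some SCC $C_\alpha$ satisfies $\chi(D[C_\alpha])>\omega$, and $D_0=D[C_\alpha]$ is strongly connected by the definition of a strongly connected component. There is no serious obstacle here; the only subtlety is justifying the cycle-localisation observation, and this is immediate from the definition of SCC.
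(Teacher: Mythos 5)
Your proof is correct and is essentially the argument the paper has in mind: the Observation is stated without proof immediately after the remark that the undirected analogue (via connected components) is ``clear,'' and decomposing into strongly connected components is the precise directed analogue. The key localisation fact---that every directed cycle lies entirely inside a single strongly connected component---is exactly what makes the union of the $n$-th acyclic pieces across components acyclic, and the contradiction argument then closes as you describe.
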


Komj\'ath  \cite{kopeconn} showed that every graph $G$ with uncountable chromatic number contains a  $k$-connected subgraph with uncountable chromatic number where $k\in \oo$.

\begin{quest}\label{connquest} Suppose that $\chr D >\omega$ and $k\in \oo$. Is there a $D_0\subseteq D$ so that  $D_0$ is strongly $k$-connected?
\end{quest}

Even the case $k=2$ is open. In the undirected case, the balanced complete bipartite graph on $2k$ vertices is a $k$-connected subgraph of any graph $G$ with uncountable chromatic number. However, any strongly connected digraph $D_0$ contains directed cycles and hence, by Theorem \ref{digirth}, there is a digraph $D$ with uncountable dichromatic number so that $D_0$ does not embed into $D$. Hence no single strongly connected graph will be a universal witness providing a positive answer to Question \ref{connquest}.

If the answer is yes to Question \ref{connquest}, a more ambitious goal would be to find a $D_0\subseteq D$ so that $\chr{D_0}>\omega$ while $D_0$ is strongly $k$-connected.


 Regarding Theorem \ref{noarrowcon}, the most burning question is the following:

\begin{quest} Is there (even consistently) a digraph $D$ with $\chr{D}>\oo$ so that $\ENL{D}{\arr{C}_n}{2}$ fails for every $n\in \oo$?
\end{quest}

Naturally, any ZFC example would be very warmly welcome:

\begin{quest} Is there in ZFC a digraph $D$ with $\chr{D}>\oo$ so that $\ENL{D}{\arr{C}_n}{\oo}$ fails for every $n\in \oo$?
\end{quest}

\subsection{Various questions} Regarding the Erd\H os-Neumann-Lara problem, we ask:

\begin{prob}
Does $\chr G>2^\omega$ imply $\dchr G> \omega$?
\end{prob}

It would be rather natural to look into the following with regards to  Theorem \ref{posrel}:

\begin{prob}
Does $\chr G=\omg$ imply $\dchr G=\omg$ consistently (without restricting the size of $G$)?
\end{prob}

The following might be easier to answer:

\begin{prob}

Does $$ \ENL{G}{\arr{P_\omega}}{\omega}$$ hold in ZFC for $G$ with chromatic number $\omega_1$ where $\arr{P_\omega}$ is the one-way infinite directed path. 
\end{prob}

{\DD Finally, we close with a fascinating open problem of Neumann-Lara from 1985:

\begin{prob}
Does every planar digraph have dichromatic number at most 2?
\end{prob}

The answer is yes if the digirth is at most four \cite{harutplanar,li2016planar}. We mention that this is a problem on finite digraphs: if there is an infinite counterexample then it must have a finite subdigraph of dichromatic number greater than 2 as well by Theorem \ref{thm:compactness}.}


\begin{thebibliography}{10}

\bibitem{abrahamcard}
Uri Abraham and Menachem Magidor.
\newblock Cardinal arithmetic.
\newblock In {\em Handbook of set theory}, pages 1149--1227. Springer, 2010.

\bibitem{bokal}
Drago Bokal, Gasper Fijavz, Martin Juvan, P~Mark Kayll, and Bojan Mohar.
\newblock The circular chromatic number of a digraph.
\newblock {\em Journal of Graph Theory}, 46(3):227--240, 2004.

\bibitem{dutta}
Kunal Dutta and CR~Subramanian.
\newblock Improved bounds on induced acyclic subgraphs in random digraphs.
\newblock {\em SIAM Journal on Discrete Mathematics}, 30(3):1848--1865, 2016.

\bibitem{ENL}
Paul Erd{\H{o}}s.
\newblock Problems and results in number theory and graph theory.
\newblock In {\em Proc. 9th Manitoba Conf. Numer. Math. and Computing}, pages
  3--21, 1979.

\bibitem{erdos_dichrom}
Paul Erd{\H o}s, John Gimbel, and Dieter Kratsch.
\newblock Some extremal results in cochromatic and dichromatic theory.
\newblock {\em Journal of Graph Theory}, 15(6):579--585, 1991.

\bibitem{erdosrado}
Paul Erd{\H o}s and Richard Rado.
\newblock A construction of graphs without triangles having pre-assigned order
  and chromatic number.
\newblock {\em Journal of the London Mathematical Society}, 1(4):445--448,
  1960.

\bibitem{prob}
Paul Erd\H os.
\newblock Graph theory and probability.
\newblock {\em Canad. J. Math}, 11:34--38, 1959.

\bibitem{EH0}
Paul Erdős and Andr\'as Hajnal.
\newblock On chromatic number of graphs and set-systems.
\newblock {\em Acta Math. Acad. Sci. Hungar}, 17:61--99, 1966.

\bibitem{fodor}
G\'eza Fodor.
\newblock Proof of a conjecture of {P}. {E}rd{\H{o}}s.
\newblock {\em Acta Sci. Math. Szeged}, 14:219--227, 1951.

\bibitem{half}
Andr\'as Hajnal and P\'eter Komj\'ath.
\newblock What must and what need not be contained in a graph of uncountable
  chromatic number?
\newblock {\em Combinatorica}, 4(1):47--52, 1984.

\bibitem{simchrom}
Andr{\'a}s Hajnal and P{\'e}ter Komj{\'a}th.
\newblock Some remarks on the simultaneous chromatic number.
\newblock {\em Combinatorica}, 23(1):89--104, 2003.

\bibitem{harut}
Ararat Harutyunyan and Bojan Mohar.
\newblock Two results on the digraph chromatic number.
\newblock {\em Discrete Mathematics}, 312(10):1823--1826, 2012.

\bibitem{harutplanar}
Ararat Harutyunyan and Bojan Mohar.
\newblock Planar digraphs of digirth five are 2-colorable.
\newblock {\em Journal of Graph Theory}, 84(4):408--427, 2017.

\bibitem{kopeconn}
P\'eter Komj\'ath.
\newblock Connectivity and chromatic number of infinite graphs.
\newblock {\em Israel J. Math.}, 56(3):257--266, 1986.

\bibitem{kopesurv}
P\'eter Komj{\'a}th.
\newblock The chromatic number of infinite graphs--a survey.
\newblock {\em Discrete Mathematics}, 311(15):1448--1450, 2011.

\bibitem{kopeerdos}
P\'eter Komj{\'a}th.
\newblock Erdős’s work on infinite graphs.
\newblock {\em Erdős Centennial}, 25:325--345, 2014.

\bibitem{kunen}
K.~Kunen.
\newblock {\em Set theory an introduction to independence proofs}.
\newblock Elsevier, 2014.

\bibitem{li2016planar}
Zhentao Li and Bojan Mohar.
\newblock Planar digraphs of digirth four are 2-colourable.
\newblock {\em arXiv preprint arXiv:1606.06114}, 2016.

\bibitem{mohar2016}
Bojan Mohar and Hehui Wu.
\newblock Dichromatic number and fractional chromatic number.
\newblock In {\em Forum of Mathematics, Sigma}, volume~4. Cambridge University
  Press, 2016.

\bibitem{Lspace}
Justin Moore.
\newblock A solution to the {L}-space problem.
\newblock {\em Journal of the American Mathematical Society}, 19(3):717--736,
  2006.

\bibitem{nessparse}
Jaroslav Ne{\v{s}}et{\v{r}}il.
\newblock A combinatorial classic—sparse graphs with high chromatic number.
\newblock In {\em Erd{\H{o}}s Centennial}, pages 383--407. Springer, 2013.

\bibitem{nlara}
Victor Neumann-Lara.
\newblock The dichromatic number of a digraph.
\newblock {\em Journal of Combinatorial Theory, Series B}, 33(3):265--270,
  1982.

\bibitem{rinotrect}
Assaf Rinot and Stevo Todorcevic.
\newblock Rectangular square-bracket operation for successor of regular
  cardinals.
\newblock {\em Fund. Math}, 220(2):119--128, 2013.

\bibitem{severino}
Michael Severino.
\newblock A short construction of highly chromatic digraphs without short
  cycles.
\newblock {\em Contributions to Discrete Mathematics}, 9(2), 2014.

\bibitem{shelahcard}
Saharon Shelah.
\newblock {\em Cardinal arithmetic}.
\newblock Number~29. Oxford University Press on Demand, 1994.

\bibitem{soukuptrees}
D{\'a}niel~T. Soukup.
\newblock Trees, ladders and graphs.
\newblock {\em Journal of Combinatorial Theory, Series B}, 115:96--116, 2015.

\bibitem{soukel}
L.~Soukup.
\newblock Elementary submodels in infinite combinatorics.
\newblock {\em Discrete Math.}, 311(15):1585--1598, 2011.

\bibitem{spencer2008size}
Joel Spencer and CR~Subramanian.
\newblock On the size of induced acyclic subgraphs in random digraphs.
\newblock {\em Discrete Mathematics and Theoretical Computer Science}, 10(2),
  2008.

\bibitem{thomassen}
Carsten Thomassen.
\newblock Cycles in graphs of uncountable chromatic number.
\newblock {\em Combinatorica}, 3(1):133--134, 1983.

\end{thebibliography}
\end{document}